\documentclass[12pt, a4paper]{article}
\usepackage{graphicx}
\usepackage{tikz}
\usepackage{color}
\usepackage[total={6in,9.5in},a4paper]{geometry}
\usepackage{amsmath, amsthm, amssymb}
\newcommand{\half}{\tfrac{1}{2}}
\newcommand\eqdef{\mathrel{\overset{\makebox[0pt]{\mbox{\tiny def}}}{=}}}



\usepackage{hyperref}

\newtheorem{thm}{Theorem}[section]
\newtheorem*{thm*}{Theorem}
\newtheorem{lem}[thm]{Lemma}
\newtheorem{cor}[thm]{Corollary}
\newtheorem{prop}[thm]{Proposition}
\newtheorem{question}{Question}
  
\theoremstyle{definition}
\newtheorem{definition}{Definition}

\makeatletter
\theoremstyle{remark}
\newtheorem*{remark}{Remark}

\def\beq#1\eeq{\begin{equation}#1\end{equation}}
\@ifpackageloaded{euler}{
 
 \newcommand{\onto}{\to\mkern-14mu\to}
 \def\rightarrowfill@#1{\m@th\setboxz@h{$#1\relbar$}\ht\z@\z@
   $#1\copy\z@\mkern-6mu\cleaders
   \hbox{$#1\mkern-2mu\box\z@\mkern-2mu$}\hfill
   \mkern-6mu\mathord\rightarrow$}
 \def\leftarrowfill@#1{\m@th\setboxz@h{$#1\relbar$}\ht\z@\z@
   $#1\mathord\leftarrow\mkern-6mu\cleaders
   \hbox{$#1\mkern-2mu\copy\z@\mkern-2mu$}\hfill
   \mkern-6mu\box\z@$}

}{
 
 \DeclareMathSymbol{\onto}{\mathrel}{AMSa}{"10}

}

\newcommand{\R}{\mathbb{R}}
\newcommand{\Z}{\mathbb{Z}} 
\newcommand{\N}{\mathbb{N}}

\newcommand{\Norm}[1]{\left\Vert#1\right\Vert}
\newcommand{\abs}[1]{\lvert#1\rvert}
\newcommand{\weaks}{weak${}^*$}

\newcommand{\Or}{\mathcal{O}}
\newcommand{\Abs}[1]{\left|#1\right|}
\DeclareMathOperator{\sgn}{sgn}
\DeclareMathOperator{\card}{card}


\begin{document}

\title{Switch Functions}

\author{Richard R.~Hall, Eli Hawkins and Bernard S.~Kay \medskip \\
{\small \emph{Department of Mathematics, University of York, York YO10 5DD, UK}}\\
{\footnotesize\tt RichardRoxbyHall@gmail.com \ \ eli.hawkins@york.ac.uk \ \ bernard.kay@york.ac.uk}}

\date{}

\maketitle

\begin{abstract}
We define a switch function to be a function from an interval to $\{1,-1\}$ with a finite number of sign changes. (Special cases are the Walsh functions.)  By a topological argument, we prove that, given $n$ real-valued functions, $f_1, \dots, f_n$, in $L^1[0,1]$, there exists a  switch function, $\sigma$, with at most $n$ sign changes that is simultaneously orthogonal to all of them in the sense that $\int_0^1 \sigma(t)f_i(t)dt=0$, for all $i = 1, \dots , n$.   

Moreover, we prove that, for each $\lambda \in (-1,1)$, there exists a unique switch function, $\sigma$, with $n$ switches such that  $\int_0^1 \sigma(t) p(t) dt = \lambda \int_0^1 p(t)dt$ for every real polynomial $p$ of degree at most $n-1$. We also prove the same statement holds for every real even polynomial of degree at most $2n-2$.  Furthermore, for each of these latter results, we write down, in terms of $\lambda$ and $n$, a  degree $n$ polynomial whose roots are the switch points of $\sigma$; we are thereby able to compute these switch functions.
\end{abstract}

\section{Introduction}
In this paper, we provide a positive answer to the first (existence) part of a question raised by one of us  \cite[end of Sec.~4]{HallTrigEllipt}  and also an answer to the second (computation) part of the question in some special cases.  

\begin{definition}
Given a real interval $[a,b]$, we call $\sigma:[a,b]\to \{1, -1\}$ a \emph{switch function}.    We call the points at which it changes sign its \emph{switch points} and sometimes refer to its sign changes as its \emph{switches}.
\end{definition}   
The functions studied by Walsh in \cite{Walsh}, are examples of switch functions.

\begin{definition}
Given  functions $f,\sigma:[a,b]\to\R$, we define the pairing
\begin{equation}
\label{InnProd}\langle f,\sigma\rangle = \int_a^b \sigma(t)f(t)\,dt
\end{equation}
and we say that $\sigma$ and $f$ are \emph{orthogonal} if $\langle f,\sigma\rangle=0$. 
\end{definition}
 The following questions were posed in \cite[p.~543]{HallTrigEllipt}: Under what circumstances does a switch function  with at most $n$ switches exist that is orthogonal to $n$ given functions, and how can such a function be computed?

To begin, we consider  what class of functions to pair with switch functions. The most obvious choice would be continuous functions, but we can do a little better than that. We need functions for which the inner products with switch functions are well defined. For such a function, $f$, there exists a switch function, $\sigma$ (essentially $\sgn f$) such that $f\sigma = \abs f$, and hence
\[
\langle f,\sigma\rangle = \int_a^b \abs{f(t)}dt ,
\]
which is the $L^1$ norm of $f$, is finite. Indeed, this is the supremum of inner products of $f$ with switch functions. So, we should require $f\in L^1[a,b]$. (Functions will always be $\R$-valued, unless otherwise specified.)

Because switch functions are paired with $L^1$ functions, it is natural to treat the switch functions as a subset of the predual of $L^1[a,b]$, which is $L^\infty[a,b]$ with the \weaks\ topology determined by the pairing with $L^1[a,b]$.
\begin{definition}
The \weaks\ topology on $L^\infty[a,b]$ is the weakest topology such that the pairing with any $f\in L^1[a,b]$ is a continuous map, $\langle f,\;\cdot\;\rangle : L^\infty[a,b]\to\R$.
\end{definition}

Note that $L^\infty[a,b]$ isn't really a set of functions. Its elements are equivalence classes of functions that differ on sets of measure $0$. This is good, because we really don't care whether a switch function is equal to $1$ or $-1$ at a given switch point.

Without loss of generality, we can take the domain interval to be $[0,1]$. 

Note that a switch function $\sigma\in L^\infty[0,1]$ is determined, up to an overall sign by its switch points. This parametrization will be very important here.
\begin{definition}
Denote $\mathbf x = (x_1,\dots,x_n)\in\R^n$.
For any $n\in\N$, the \emph{standard $n$-simplex} is
\[
\Delta^n \eqdef \{\mathbf x\in\R^n \mid 0\leq x_1\leq x_2\leq\dots\leq x_n\leq 1\} .
\]
For convenience, we let $x_0\eqdef0$ and $x_{n+1}\eqdef1$.
\[
\partial\Delta^n \eqdef \{\mathbf x\in\Delta^n \mid x_m=x_{m+1} \text{ for some }0\leq m\leq n\}
\]
\end{definition}

\begin{definition}
For any $\mathbf x\in\Delta^n$, let $\Sigma_n(\mathbf x)\in L^\infty[0,1]$ be (the equivalence class of) the function given by
\[
\Sigma_n(\mathbf x)(t) = (-1)^m \quad \text{ for }\quad x_m \leq t < x_{m+1}
\]
for any $0\leq m\leq n$, and $\Sigma_n(\mathbf x)(1)=(-1)^n$.
In this way, $\Sigma_n : \Delta^n \to L^\infty[0,1]$.
\end{definition}

\begin{definition}
$D_n \eqdef \Sigma_n(\Delta^n) \subset L^\infty[0,1]$  with the \weaks\ topology. $\partial D_n \eqdef \Sigma_n(\partial \Delta^n) \subset D_n$.
\end{definition}
\begin{remark}
$D_n\smallsetminus \partial D_n \subset L^\infty[0,1]$ is the set of switch functions with precisely $n$ switches that take the value $+1$ for all small enough $t\in [0,1]$. $\partial D_n =D_{n-1}\cup (-D_{n-1})\subset L^\infty[0,1]$ is the set of switch functions with at most $n-1$ switches.
\end{remark}

With these definitions, we can formulate the main questions more precisely.
\begin{question}
\label{Main Question}
Given $n$ functions, $f_1,\dots,f_n\in L^\infty[0,1]$ (or equivalently, an $n$-dimensional subspace) does there exist $\sigma\in D_n$ such that $\langle f_i,\sigma\rangle=0$ for $i=1,\dots,n$? Can such a $\sigma$ be computed? Is it unique?
\end{question}
Theorem~\ref{Main theorem} will show that $\sigma$ does always exist. In the later sections, we will compute it for some classes of functions.

Given a single function, $f\in L^1[0,1]$, it is clear that there exists $\sigma\in D_1$ orthogonal to $f$,  because if we set
\begin{equation}
\label{IndInt}
F(x)=\int_0^x f(t)dt
\end{equation}
then $F$ is continuous and so there exists \emph{at least} one $x_1$ such that $F(x_1)={\half}F(1)$. This serves as the switch point, so $\sigma=\Sigma_1(x_1)$ (and $-\Sigma_1(x_1)$) is a switch function orthogonal to $f$.

This is a special case of the Intermediate Value Theorem, so Theorem~\ref{Main theorem} can be seen as a generalization of the Intermediate Value Theorem.

Given $n$ functions, $f_i$, $i= 1, \dots, n$, with integrals $F_i$, $\Sigma_n(\mathbf x)$ is an orthogonal switch function if $0\le x_1\le x_2 \le \dots \le x_n \le 1$ and
\begin{equation}
\label{SwitchEq}
F_i(x_n)-F_i(x_{n-1}) + \dots + (-1)^{n-1}F_i(x_1)=\half F_i(1),
\end{equation}
for $1\le i\le n$.

\section{Two Switches}
When $n=2$, the equations \eqref{SwitchEq} specialise to
\begin{equation*}
F_i(x_2)-F_i(x_1)=\half F_i(1),
\end{equation*}
for $i=1,2$.   There is an easy solution in this case when one of the functions, say $f_1$, is positive (i.e.~$f_1(x) > 0$ for almost all $x\in [0,1]$).  By rescaling, we can assume without loss of generality that  $F_1(1)=1$.  
Define $a$ by $F_1(a)=\half$ and $\phi:[0,a]\to\R$  by the equation
\begin{equation}
\label{f1pos}
F_1(x+\phi(x))= F_1(x) + \half .
\end{equation}
Note that $a+\phi(a)=1$.  If $F_2(a)$ is $\half F_2(1)$ then we only need one switch point.  Otherwise, the equation
\begin{equation}
\label{ExcessDeficit}
F_2(x+\phi(x)) - F_2(x) = \half F_2(1)
\end{equation}
errs by excess or deficit at $x=0$ and the opposite at $x=a$, and therefore, by continuity, there must be a value, $x=x_1$ for which it holds, whereupon we may take the switch points to be $x_1$ and $x_2=x_1+\phi(x_1)$.   (When $F_2(a)$ is $\half F_2(1)$, we may, if we wish, think that there is a second switch point at $x=0$ or at $x=1$.)

This argument permits an extension.  Given an integer $k \ge 3$ we can define $\phi$ by $F_1(x+\phi(x))= F_1(x) + \frac1k$, together with points $a_1, a_2, \dots, a_{k-1}$ ($a_1=\phi(0), a_2=a_1 + \phi(a_1), \dots , a_{k-1}=a_{k-2} + \phi(a_{k-2}$)) such that $F_1(a_j)=j/k$, ($1\le j < k$).  We put $a_0=0$, $a_k=1$ ($=a_{k-1} +\phi(a_{k-1})$) and look at the integrals of $f_2$ over the intervals $[a_{j-1}, a_j]$.  Either these are all equal to $F_2(1)/k$ (in which case \eqref{kExDef} below holds with $x$ equal to any of the $a_j$, $j=1, \dots , {k-1}$)  or there exists an adjacent pair of such integrals, one in excess and the other deficit, and so there exists an $x$ such that 
\begin{equation}
\label{kExDef}
F_2(x+\phi(x)) - F_2(x) = \frac{1}{k} F_2(1)
\end{equation}
Thus we may solve the simultaneous equations
\begin{equation}
\label{1overk}
F_i(x_2)-F_i(x_1)=\frac{1}{k} F_i(1), \quad i=1,2.
\end{equation}
We then have that for any $\lambda$ of form
\begin{equation}
\label{lambda}
\lambda=1-\tfrac{2}{k},
\end{equation}
where $k\in\N$, and for any pair of functions $f_1, f_2 \in L^1[0,1]$, one of which is positive, there exists a $0\leq x_1\leq x_2\leq 1$ such that $\sigma=\Sigma_2(x_1,x_2)$ satisfies
\begin{equation}
\label{ChiLambda}
\int_0^1 \sigma(x)f_i(x)dx= \lambda\int_0^1f_i(x)dx,
\end{equation}
for $i=1,2$. In other words, $\sigma-\lambda$ is orthogonal to each $f_i$.

This suggests a more general question.
\begin{question}
\label{Generalized Question}
Given $-1\leq\lambda\leq1$ and $n$ functions, $f_1,\dots,f_n\in L^1[0,1]$, does there exist $\sigma\in D_n$, such that $\langle f_i,\sigma-\lambda\rangle=0$ for $i=1,\dots n$? Can such a $\sigma$ be computed? Is it unique?
\end{question}

For now, we return to Question~\ref{Main Question} (i.e., $\lambda=0$) for $n=2$.   We can drop the assumption that one of the two functions, $f_1, f_2$, is positive with the following topological argument which, in the sequel, we will generalize to the case of $n$ functions.  

Recall that the simplex $\Delta^2$ is just the triangle in $\R^2$ with vertices $(0,0)$, $(0,1)$, and $(1,1)$; 
\begin{figure}
   \centering
    \includegraphics[clip, scale=0.7]{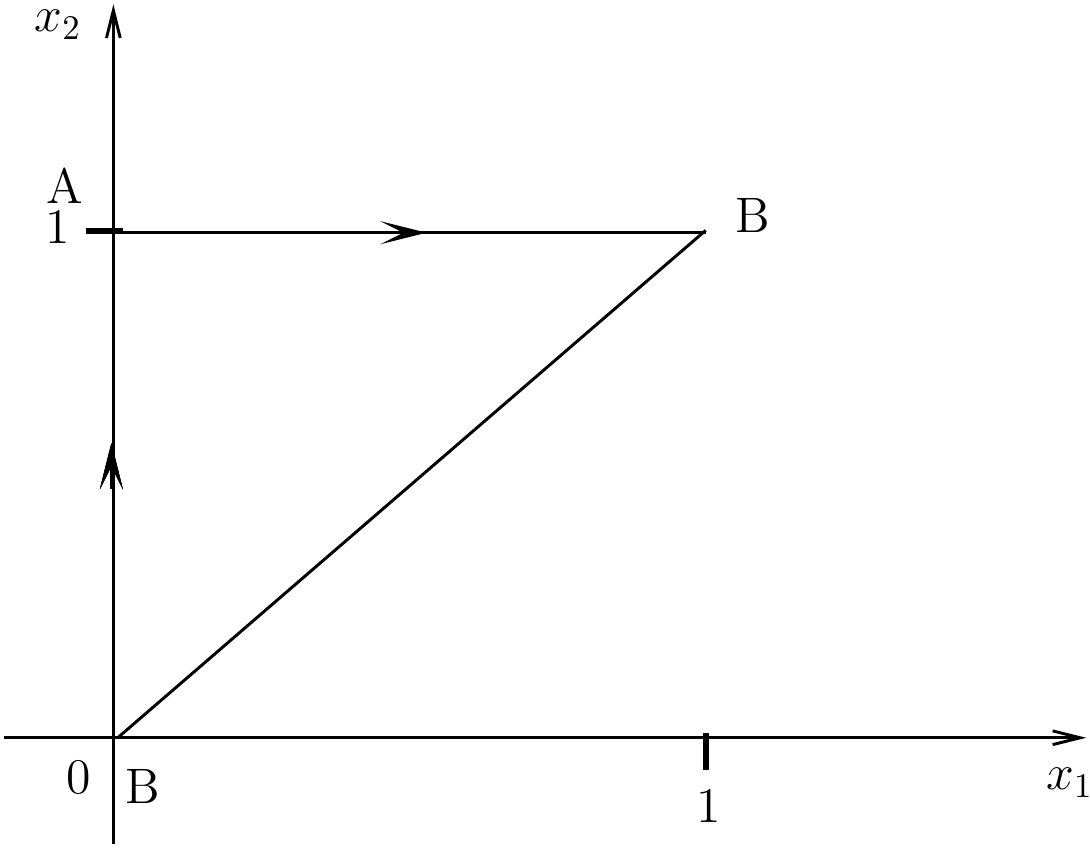}
   \caption{ \label{fig1} The triangle $\Delta^2$, which, as explained in the text, can be taken to represesent the set of switch functions $D_2$ provided one thinks of points on the hypoteneuse as identified.  For the significance of the labels A and B and of the arrows, see the caption to Figure~\ref{fig2}.}
 \end{figure}
this is homeomorphic to the closed unit disc, $B^2$. The map $\Sigma_2:\Delta^2\to D_2$ is almost injective. Regarded as an element of $L^\infty[0,1]$, a function that switches twice at the same point is the same as a function that doesn't switch at all, so 
\begin{equation*}
\Sigma_2(x,x)=1 \quad \forall x\in [0,1],
\end{equation*}
but any other switch function in $D_2$ comes from a unique point in $\Delta^2$. This means that $D^2$ is topologically the quotient of $\Delta^2$ by its hypotenuse, the edge from $(0,0)$ to $(1,1)$. This is also homomorphic to $B^2$.

The upper edge of $\Delta^2$ is the set $\{(x,1)\mid x\in[0,1]\}$. For $0<x<1$, $\Sigma_2(x,1)$ is the switch function with a single switch point at $x$ and equal to $+1$ on $[0,x)$. At the ends, this reduces to the constant functions $\Sigma_2(0,1)=-1$ and $\Sigma_2(1,1)=+1$ with no switch points.

The vertical edge of $\Delta^2$ is the set $\{(0,x)\mid x\in[0,1]\}$. For $0<x<1$, $\Sigma_2(0,x)$ is the switch function with a single switch point at $x$ but equal to $-1$ on $[0,x)$.

When $\mathbf x\in\Delta^2$ is in the interior, $\Sigma_2(\mathbf x)$ is the unique switch function in $D_2$ with switch points $x_1$ and $x_2$. When $\mathbf x\in\Delta^2$ is on the boundary, $\Sigma_2(\mathbf x)$ has one (or no) switch point, and there is another function in $D_2$ with the same switch point. To be precise, $\Sigma_2(0,x)=-\Sigma_2(x,1)$. 

Analogously, any point $\mathbf z\in S^1\subset B^2$, on the unit circle, which is the boundary of the unit disc, has an antipodal point $-\mathbf z\in S^1$. In fact, we will see that there exists a (not unique) homeomorphism $\varphi_2:B^2\to D_2$ such that for $\mathbf z\in S^1$, $\varphi_2(-\mathbf z) = -\varphi_2(\mathbf z)$.

For two given functions, $f_1, f_2\in L^1[0,1]$, we now consider the map, $\Psi: D_2 \rightarrow \mathbb{R}^2$
\begin{equation}
\label{Phi} 
\Psi(\sigma) \eqdef \left(\int_0^1 \sigma(t)f_1(t)dt, \int_0^1\sigma(t)f_2(t)dt\right) = \left(\langle f_1,\sigma\rangle,\langle f_2,\sigma\rangle\right).
\end{equation}
This is continuous, by definition of the \weaks\ topology, and for $\sigma\in\partial D_2$, $\Psi(-\sigma)=-\Psi(\sigma)$.   Precomposing $\Psi$ with the homeomorphism, $\varphi_2:B^2\to D_2$ we thus have a continuous map $\Psi\circ \varphi_2:B^2\to\mathbb{R}^2$ that maps antipodal points to antipodal points. 

Two simple examples are provided by (a) $f_1(x)=1, f_2(x)=x$ and (b) $f_1(x)=\cos \pi x, f_2(x)=\sin \pi x$.   For both of these examples, it is not difficult to compute the map $\Psi$ and to verify that  $\Psi$ is a homeomorphism onto its image. (See Lem.~\ref{Hom lemma}.)  

In fact, as indicated in Figure~\ref{fig2}(a), in the first example, the range is the region bounded by the curves $x \mapsto (y_1(x), y_2(x))$ and $x\mapsto (-y_1(x), -y_2(x))$ where
\begin{equation*}
(y_1(x), y_2(x)) = \Psi(\Sigma_2(0,x))  = (1-2x, \half - x^2)
\end{equation*}
is the image of the point $(0,x)$ on the vertical edge of the triangle, $\Delta^2$, pictured in Figure~\ref{fig1}.   

Similarly, in our second example, the range of $\Psi$ is the disc bounded by the circle of radius $2/\pi$ centred on the origin. The image of the point $(0,x)$ along the vertical edge of $\Delta^2$ is 
\begin{equation*}
(y_1(x), y_2(x)) = \Psi(\Sigma_2(0,x))  = \frac{2}{\pi}(-\sin \pi x, \cos \pi x)
\end{equation*}
on the left semicircle. Likewise, the image of the point $(x,1)$ on the horizontal edge of $\Delta^2$ is  $(-y_1(x), -y_2(x))$ on the right semicircle. 

\begin{lem}
\label{Hom lemma}
There exists a homeomorphism $\varphi_2:B^2\to D_2$ such that for $\mathbf z\in S^1$, $\varphi_2(-\mathbf z)=-\varphi_2(\mathbf z)$.
\end{lem}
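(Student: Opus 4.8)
The plan is to pin down the topology of $D_2$ by showing that $\Sigma_2$ is a quotient map, then to identify the boundary circle of the resulting disc with $S^1$ in such a way that $\sigma\mapsto-\sigma$ becomes the antipodal map, and finally to extend this boundary identification over the interior.

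\emph{Step 1: $D_2$ is a $2$-disc whose manifold boundary is $\partial D_2$.} For $f\in L^1[0,1]$ with $F(x)=\int_0^x f(t)\,dt$ one computes directly that $\langle f,\Sigma_2(x_1,x_2)\rangle=F(1)-2F(x_2)+2F(x_1)$, which is continuous in $(x_1,x_2)$; hence $\Sigma_2:\Delta^2\to D_2$ is continuous for the \weaks\ topology. Since $\Delta^2$ is compact and $L^\infty[0,1]$ is Hausdorff, $\Sigma_2$ is a closed surjection, hence a quotient map. Its only non-singleton fibre is the hypotenuse $\{(x,x):x\in[0,1]\}$, all of which is sent to the constant function $1$; off the hypotenuse $\Sigma_2$ is injective, because an element of $D_2$ equals $+1$ near $0$ and is therefore determined by its genuine switch points. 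Thus $D_2$ is homeomorphic to $\Delta^2$ with its hypotenuse collapsed to a point, and collapsing a closed boundary arc of a $2$-disc to a point again yields a $2$-disc; fix such a homeomorphism $\psi:D_2\isom B^2$. Under $\psi$ the boundary $\partial D_2=\Sigma_2(\partial\Delta^2)$ is carried onto $S^1$.

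\emph{Step 2: an antipode-respecting identification of the boundary.} By the Remark in Section~1, $\partial D_2=D_1\cup(-D_1)$ is a circle, namely the union of the two arcs $\{\Sigma_2(x,1):x\in[0,1]\}$ and $\{\Sigma_2(0,x):x\in[0,1]\}$, which meet exactly at the constants $\Sigma_2(0,1)=-1$ and $\Sigma_2(0,0)=\Sigma_2(1,1)=1$. Since no switch function equals its own negative, $\sigma\mapsto-\sigma$ is a fixed-point-free involution of this circle, and the identity $\Sigma_2(0,x)=-\Sigma_2(x,1)$ shows it carries each arc onto the other compatibly with the parameter $x$. Concretely, parametrising $S^1$ by $\mathbf z(\theta)=(\cos\theta,\sin\theta)$, $\theta\in[0,2\pi]$, set $\beta(\mathbf z(\theta))=\Sigma_2(0,\theta/\pi)$ for $\theta\in[0,\pi]$ and $\beta(\mathbf z(\theta))=\Sigma_2(\theta/\pi-1,1)$ for $\theta\in[\pi,2\pi]$. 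One checks that the two formulas agree at $\theta=\pi$ and at $\theta\equiv0$, that $\beta$ is a continuous bijection from the compact space $S^1$ onto the Hausdorff space $\partial D_2$ (hence a homeomorphism), and that $\beta(-\mathbf z(\theta))=\beta(\mathbf z(\theta+\pi))=-\beta(\mathbf z(\theta))$.

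\emph{Step 3: extension over the disc.} The composite $g\eqdef\psi\circ\beta:S^1\to S^1$ is a homeomorphism; cone it to $G:B^2\to B^2$ by $G(r\mathbf z)=r\,g(\mathbf z)$ for $r\in[0,1]$, $\mathbf z\in S^1$ (the Alexander trick), and set $\varphi_2\eqdef\psi^{-1}\circ G$. Then $\varphi_2:B^2\to D_2$ is a homeomorphism with $\varphi_2|_{S^1}=\psi^{-1}\circ g=\beta$, so $\varphi_2(-\mathbf z)=\beta(-\mathbf z)=-\beta(\mathbf z)=-\varphi_2(\mathbf z)$ for every $\mathbf z\in S^1$, as required. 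The one point I expect to need care is Step~1 — verifying that the \weaks\ topology on $D_2$ is exactly the quotient topology induced by $\Sigma_2$, so that $D_2$ genuinely is a $2$-disc with manifold boundary $\partial D_2$; once that is in hand, Steps~2 and~3 are elementary point-set topology of the $2$-disc.
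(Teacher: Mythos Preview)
Your proof is correct and takes a genuinely different route from the paper's. The paper constructs $\varphi_2$ explicitly by choosing the particular pair $f_1(x)=\cos\pi x$, $f_2(x)=\sin\pi x$, computing that the Jacobian of $\Psi\circ\Sigma_2$ equals $4\sin[\pi(x_2-x_1)]$ (nonvanishing on the interior of $\Delta^2$), checking by hand that $\Psi$ carries $\partial D_2$ homeomorphically onto the circle of radius $2/\pi$, and then taking the inverse of the rescaled $\Psi$; the antipodal property falls out because $\Psi$ is linear in $\sigma$. Your argument is instead purely topological: you show $\Sigma_2$ is a quotient map (continuous from compact to Hausdorff, hence closed), identify $D_2$ as a disc with a boundary arc collapsed, build an explicit antipode-respecting parametrisation $\beta$ of $\partial D_2$ by hand, and then use the Alexander trick to extend over the interior. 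The paper's approach yields a concrete analytic formula for $\varphi_2$ at the cost of a Jacobian calculation and some unstated work to pass from local diffeomorphism to global homeomorphism; your approach avoids all computation and makes transparent why the antipodal condition can always be imposed on the boundary circle. Incidentally, the concern you flag at the end is unfounded: the compact-to-Hausdorff argument in Step~1 already proves that the \weaks\ subspace topology on $D_2$ coincides with the quotient topology from $\Sigma_2$, so Step~1 is complete as written.
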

\begin{proof}
For case (b) above, the Jacobian determinant of $\Psi\circ\Sigma_2 : \Delta^2\to\R^2$ is easily computed to be $4\sin[\pi(x_2-x_1)]$, which does not vanish on the interior of $\Delta^2$; therefore $D_2\smallsetminus\partial D_2$ maps homeomorphically to its image. Explicit calculation shows that $\partial D_2$ maps homeomorphically to the circle of radius $\frac{2}{\pi}$. This shows that $\Psi$ maps $D_2$ homeomorphically to the disc of radius $\frac2\pi$. 

After rescaling by $\frac\pi2$, this gives an explicit homeomorphism from $D_2$ to $B^2$. Its inverse is a suitable choice of $\varphi_2$.
\end{proof}
Of course, this is far from unique.

\begin{prop}
\label{Preliminary case}
Given $f_1,f_2\in L^1[0,1]$, there exists $\sigma\in D_2$ orthogonal to $f_1$ and $f_2$.
\end{prop}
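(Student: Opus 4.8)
The plan is to reduce the statement to a Borsuk--Ulam-type vanishing principle, using exactly the machinery assembled above. Form the composite $g \eqdef \Psi\circ\varphi_2 : B^2\to\R^2$, where $\Psi$ is the map of \eqref{Phi} and $\varphi_2:B^2\to D_2$ is the homeomorphism furnished by Lemma~\ref{Hom lemma}. Since $\Psi$ is continuous (for the \weaks\ topology) and $\varphi_2$ is a homeomorphism, $g$ is continuous. Moreover $\Psi$ is linear in $\sigma$, so $\Psi(-\sigma)=-\Psi(\sigma)$ whenever both $\sigma$ and $-\sigma$ lie in $D_2$, which is the case on $\partial D_2$; and $\varphi_2$ was chosen so that $\varphi_2(-\mathbf z)=-\varphi_2(\mathbf z)$ for $\mathbf z\in S^1$. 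Hence $g(-\mathbf z)=-g(\mathbf z)$ for all $\mathbf z\in S^1$, i.e.\ $g$ restricts to an odd map on the boundary circle. (The reason for passing through $\varphi_2$ rather than working directly on $\Delta^2$ is precisely that $D_2$, unlike the triangle, carries a sphere boundary with an antipodal structure matching $\Psi$.)

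The crux is the following classical fact: a continuous map $g:B^2\to\R^2$ whose restriction to $S^1$ is odd must vanish somewhere in $B^2$. I would prove this by contradiction via a degree argument: if $g$ never vanished, then $\mathbf z\mapsto g(\mathbf z)/\lvert g(\mathbf z)\rvert$ would be a continuous map $B^2\to S^1$ extending the odd map $h\eqdef g|_{S^1}/\lvert g|_{S^1}\rvert : S^1\to S^1$; but an odd self-map of $S^1$ has odd, in particular nonzero, degree, so it is not null-homotopic, contradicting the existence of an extension over the disc. (Alternatively, one may simply cite Borsuk's theorem.)

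Applying this to our $g$ yields a point $\mathbf z_0\in B^2$ with $g(\mathbf z_0)=0$. Setting $\sigma\eqdef\varphi_2(\mathbf z_0)\in D_2$, we obtain $\Psi(\sigma)=0$, i.e.\ $\langle f_1,\sigma\rangle=\langle f_2,\sigma\rangle=0$, which is the assertion. Note that $\sigma$ is automatically a genuine element of $D_2$, so no degenerate case needs separate treatment. The only substantive ingredient is the degree-theoretic lemma of the previous paragraph; everything else is bookkeeping with structures already in hand. This is also the argument I expect to generalize to $n$ functions in the next section, with $B^2$, $S^1$ and the planar degree fact replaced by $B^n$, $S^{n-1}$ and the full Borsuk--Ulam theorem, the main extra work there being the construction of the analogue of $\varphi_2$.
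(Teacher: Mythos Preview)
Your proof is correct and follows essentially the same route as the paper: form the composite $\Psi\circ\varphi_2:B^2\to\R^2$, observe that it is continuous and odd on $S^1$, and derive a contradiction from the assumption that it misses the origin via the fact that an odd self-map of $S^1$ has odd (hence nonzero) degree. The paper phrases the contradiction in terms of winding numbers of the family $\alpha_s(\mathbf z)=(\Psi\circ\varphi_2)(s\mathbf z)$ rather than via an extension of $g/\lvert g\rvert$ over the disc, but this is the same argument.
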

\begin{proof}
This condition is equivalent to $\Psi(\sigma)=(0,0)$. Suppose that no such $\sigma$ exists.

Any map from $S^1$ to $\R^2\smallsetminus \{(0,0)\}$ has a well-defined, homotopy-invariant winding number (around the origin).  For $s\in[0,1]$, define $\alpha_s:S^1\to \R^2\smallsetminus \{(0,0)\}$ by
\[
\alpha_s(\mathbf z) = (\Psi\circ \varphi_2)(s\mathbf z) .
\]
Because $\alpha_0$ has image a single point, its winding number is $0$.
Because $\alpha_s$ is continuous in $s$, each $\alpha_s$ must have the same winding number: $0$.

On the other hand, $\alpha_1$ respects antipodes in the sense that $\alpha_1(-\mathbf z)=-\alpha_1(\mathbf z)$ for any $\mathbf z\in S^1$. This implies that $\alpha_1$ has an odd winding number. This is a contradiction.
\end{proof}

This proof used the homeomorphism $\varphi_2:B^2\to D_2$.   For general $n$, it is clear that $\Delta^n\cong B^n$, and $\Sigma_n:\Delta^n\to D_n$ is a homeomorphism on the interior, so that $D_n$ can be constructed as a quotient of $\Delta^n$ by identifications along the boundary.  But, although it seems to us likely that $D_n$ is homeomorphic to the $n$-ball $B^n$, it seems to become increasingly difficult to prove this as $n$ increases and we have not been able to prove it for general $n$.

However, it turns out that homeomorphism is not really necessary for this proof. The proof of Proposition~\ref{Preliminary case} uses algebraic topology, and algebraic topology invariants are not just homeomorphism invariant, they are homotopy invariant \cite{AlgTop}. So, we really only needed $\varphi_2$ to be a homotopy equivalence $\varphi_2:(B^2,S^1)\to(D_2,\partial D_2)$ that respects antipodes. 
Generalizing this will lead to our main existence result, Theorem \ref{Main theorem}.

Returning to Question~\ref{Generalized Question}, despite the above result for $\lambda$ of form $1-2/k$, existence does not always hold.  In fact, if we make the definition:
\begin{definition}
Let $\mathcal F_n$ be the set of $\lambda$ in the interval $(-1,1)$ such that for some $f_1,\dots,f_n\in L^1[0,1]$ there \textbf{does not} exist $\sigma\in D_n$ with $\langle f_i,\sigma-\lambda\rangle = 0$ for all $i=1,\dots,n$.
\end{definition}
then we have
\begin{prop}
\label{Dense}
$\mathcal F_2 \subset [-1,1]$ is dense.
\end{prop}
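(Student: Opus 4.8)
The plan is to exhibit, for a dense set of values $\lambda\in(-1,1)$, a pair of functions $f_1,f_2\in L^1[0,1]$ for which no $\sigma\in D_2$ satisfies $\langle f_i,\sigma-\lambda\rangle=0$. The key observation is that $\langle f_i,\sigma-\lambda\rangle = 0$ is equivalent to $\Psi(\sigma) = \lambda\bigl(\int_0^1 f_1,\int_0^1 f_2\bigr)$, where $\Psi$ is the map in \eqref{Phi}. So the question becomes whether the specific point $\lambda\cdot\bigl(\langle f_1,1\rangle,\langle f_2,1\rangle\bigr)$ lies in the image $\Psi(D_2)\subset\R^2$. I would therefore pick a convenient pair $f_1,f_2$ whose image $\Psi(D_2)$ is explicitly understood — the natural choice is example (b) from the text, $f_1(x)=\cos\pi x$, $f_2(x)=\sin\pi x$, for which $\Psi(D_2)$ is exactly the closed disc of radius $2/\pi$ centred at the origin (by Lemma~\ref{Hom lemma} and the computation preceding it). But for this pair $\int_0^1 f_1 = 0 = \int_0^1 f_2$, so $\lambda\cdot(0,0)=(0,0)$ always lies in the image; this pair shows nothing. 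The fix is to add constants: take $f_1(x) = \cos\pi x + c_1$ and $f_2(x) = \sin\pi x + c_2$ for suitable constants, or more flexibly to take linear combinations, so that $\bigl(\langle f_1,1\rangle,\langle f_2,1\rangle\bigr)$ is a prescribed nonzero vector while $\Psi(D_2)$ remains a bounded, explicitly describable region.

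Concretely, I would argue as follows. Fix a target direction and consider $f_1 = 1$ and $f_2 = g$ for a single auxiliary function $g\in L^1[0,1]$; then $\Psi(\sigma) = \bigl(\langle 1,\sigma\rangle,\langle g,\sigma\rangle\bigr)$, and the condition $\langle f_i,\sigma-\lambda\rangle = 0$ becomes $\langle 1,\sigma\rangle = \lambda$ and $\langle g,\sigma\rangle = \lambda\langle g,1\rangle$. The first equation already constrains $\sigma\in D_2$ severely: writing $\sigma = \Sigma_2(x_1,x_2)$, one computes $\langle 1,\sigma\rangle = 1 - 2x_2 + 2x_1$, so $\langle 1,\sigma\rangle = \lambda$ forces $x_2 - x_1 = (1-\lambda)/2$, a fixed gap. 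The image of $\{\sigma\in D_2 : \langle 1,\sigma\rangle = \lambda\}$ under $\sigma\mapsto\langle g,\sigma\rangle$ is then the image of the segment $\{(x_1, x_1 + (1-\lambda)/2) : 0\le x_1\le (1+\lambda)/2\}$ under a continuous function of $x_1$; call its range $I_\lambda(g)\subset\R$, a compact interval. Existence fails precisely when $\lambda\langle g,1\rangle\notin I_\lambda(g)$. So the task reduces to: for a dense set of $\lambda$, choose $g$ so that $\lambda\langle g,1\rangle$ escapes $I_\lambda(g)$.

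For the choice, I would take $g$ to be (a scalar multiple of) an indicator function or a simple polynomial and push $\langle g,1\rangle$ large while keeping $\sup I_\lambda(g)$ controlled; alternatively, take $g$ with $\langle g,1\rangle$ of a sign opposite to everything in $I_\lambda(g)$. The cleanest route: observe that $|\langle g,\sigma\rangle|\le\langle|g|,1\rangle$ for any switch function, with equality essentially only for $\sigma=\pm\sgn g$, so if $\sgn g$ has more than $2$ sign changes then no $\sigma\in D_2$ attains $\pm\langle|g|,1\rangle$ and in fact $I_\lambda(g)$ is a proper subinterval of $[-\langle|g|,1\rangle,\langle|g|,1\rangle]$ whose endpoints can be bounded away from $\pm\langle|g|,1\rangle$ uniformly; choosing $g$ with $\langle g,1\rangle$ close in absolute value to $\langle|g|,1\rangle$ (e.g. $g$ mostly of one sign with a tiny, highly oscillatory opposite-sign part) then makes $\lambda\langle g,1\rangle$ exceed $\sup I_\lambda(g)$ once $|\lambda|$ is not too small, for each such $\lambda$. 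Running this construction with $g$ depending on $\lambda$, and noting that the excluded $\lambda$ for a single well-chosen $g$ already form an interval, one gets that $\mathcal F_2$ contains intervals accumulating at every point of $(-1,1)$, hence is dense.

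The main obstacle I anticipate is bookkeeping the interval $I_\lambda(g)$: one must show its endpoints are bounded away from the extreme values $\pm\langle|g|,1\rangle$ by an amount that can be made to lose to $|\lambda\langle g,1\rangle|$. This is where the freedom to let $g$ oscillate rapidly (many sign changes, so that any $2$-switch $\sigma$ disagrees with $\sgn g$ on a set of definite measure) is essential, and quantifying "definite measure" is the only genuinely delicate estimate. Everything else — the reduction via $\Psi$, the fixed-gap computation $x_2-x_1=(1-\lambda)/2$, and the passage from "an interval of bad $\lambda$ for each $g$" to density — is routine.
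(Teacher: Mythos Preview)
Your reduction is sound and matches the paper's first move: take $f_1=1$, so that $\langle 1,\sigma\rangle=\lambda$ forces the fixed gap $x_2-x_1=w:=(1-\lambda)/2$, and the task becomes: find $g=f_2$ with $\langle g,1\rangle\neq 0$ such that $\int_{x_1}^{x_1+w} g \ne w\langle g,1\rangle$ for every $x_1\in[0,1-w]$.

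The gap is in your construction of $g$. The mechanism you propose---make $\langle g,1\rangle$ close to $\langle|g|,1\rangle$ while keeping $I_\lambda(g)$ bounded strictly inside $[-\langle|g|,1\rangle,\langle|g|,1\rangle]$---pulls in opposite directions: the first asks $g$ to be essentially of one sign, the second asks $\sgn g$ to have many sign changes. Your compromise (``mostly one sign with a tiny oscillatory part of measure $\epsilon$'') does not resolve this. For any $\sigma=\Sigma_2(x_1,x_1+w)$ whose switches lie in the constant region, $\langle g,\sigma\rangle$ is computable exactly and differs from the target $\lambda\langle g,1\rangle$ by only $O(\epsilon)$; as the window then slides continuously into the oscillatory region, you give no reason the value should skip over the target rather than pass through it. You never verify the construction for a single $\lambda$, and the claim that the bad $\lambda$'s for a fixed $g$ form an interval is unsupported.

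The paper's argument is concrete and short. It takes
\[
f_2(t)=\frac{(4k+1)\pi}{2}\cos\!\left(\frac{4k+1}{2}\pi t\right),
\]
chosen so that $\int_0^1 f_2=1$ (since $[0,1]$ is $k+\tfrac14$ periods, not an integer number), and sets $\lambda=\tfrac{4k-8m+1}{4k+1}$ so that $w=\tfrac{4m}{4k+1}$ is exactly $m$ full periods of $f_2$. Then $\int_{x_1}^{x_1+w} f_2\equiv 0$ identically in $x_1$, while the required value is $w\ne 0$; hence no $\sigma\in D_2$ works, and these $\lambda$ are dense. The underlying phenomenon is the converse to the horizontal chord theorem: for continuous $H$ with $H(0)=H(1)$, a horizontal chord of length $w$ is guaranteed only when $w=1/k$; for every other $w$ a counterexample exists---which is exactly what you need, but your proposal does not actually supply one.
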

\begin{proof}
For $k=1,2,3,\dots$ and $m=1,2,\dots,k$,  let 
\[
\lambda = \frac{4k-8m+1}{4k+1} .
\]
Consider the two functions, $f_1(t)=1$ and 
\[
f_2(t) = \frac{(4k+1)\pi}{2} \cos\left(\frac{4k+1}{2}\pi t\right) .
\]
These have been chosen so that $\int_0^1 f_i(t)dt =1$ for $i=1,2$.

The conditions that $0 = \langle f_i,\Sigma_2(x_1,x_2)-\lambda\rangle$ are explicitly
\[
x_2-x_1 = \frac{1-\lambda}2 = \frac{4m}{4k+1}
\]
and 
\begin{align*}
\frac{1-\lambda}2 &= \sin\left(\frac{4k+1}{2}\pi x_2\right) - \sin\left(\frac{4k+1}{2}\pi x_1\right) \\
&= 2 \sin\left(\frac{4k+1}{4}\pi [x_2-x_1]\right) \cos\left(\frac{4k+1}{4}\pi [x_2+x_1]\right) .
\end{align*}
By the first condition,
\[
\sin\left(\frac{4k+1}{4}\pi [x_2-x_1]\right) = \sin \pi m = 0 ,
\]
but $1-\lambda\neq0$. Therefore, for this choice of $\lambda$ and functions, there does not exist any $\sigma\in D_2$ such that $\langle f_i,\sigma-\lambda\rangle =0$ for $i=1,2$, so $\lambda\in \mathcal F_2$.

Clearly, the set 
\[
\left\{\tfrac{4k-8m+1}{4k+1} \mid k,m\in\N,\ 1\leq m\leq k\right\} \subset \mathcal F_2
\]
is dense in the interval $[-1,1]$, and therefore $\mathcal F_2$ is as well.
\end{proof}

\begin{figure}
   \centering
    \includegraphics[clip, scale=0.7]{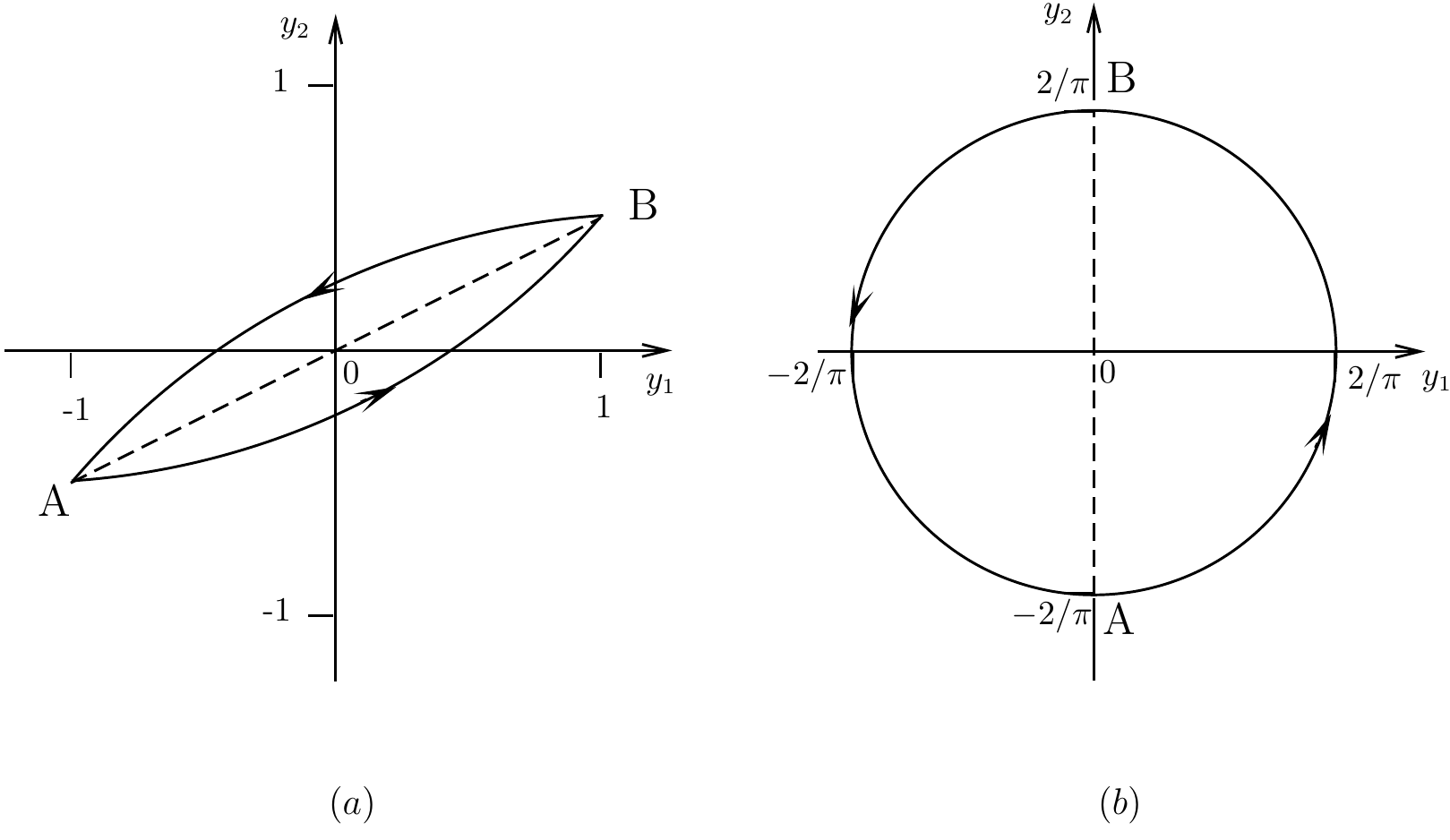}
   \caption{\label{fig2} The image of the map $\Psi: D_2\rightarrow {\mathbb R}^2$ for (a) $f_1(x)=1, f_2(x)=x$; (b) $f_1(x) = \cos \pi x, f_2(x)=\sin \pi x$.   A and B denote the images under $\Psi$ of the points labelled A and B on the triangle, $\Delta^2$ of Figure~\ref{fig1} (regarded as a representation of $D_2$ by identifying the points on its hypotenuse).   The arrows indicate the path traversed by the image of $\Psi$ when the argument traverses the path on the boundary of $\Delta^2$ indicated by the arrows in Figure~\ref{fig1}. The dashed lines in (a) and (b) are the images of the parametrized curves $[-1,1]\rightarrow \mathbb{R}^2$ defined by Equation \eqref{slope}.}
 \end{figure}

On the other hand, existence and uniqueness do hold  in Examples (a) and (b) above in the case $n=2$:  In each of these examples, the line:
\begin{equation}
\label{slope}
\lambda \mapsto \left(\lambda\int_0^1 f_1(t)dt,  \ \ \lambda\int_0^1 f_2(t)dt\right),
\end{equation}
$\lambda \in [-1,1]$,   represented in Figures 2(a) and 2(b) by the indicated dashed lines, clearly lies entirely in the range of $\Psi$.  Hence, for each of these examples, \eqref{ChiLambda} holds for all $\lambda \in [-1,1]$.  In general, if the image of $\Psi$ is star shaped (as in these examples) then existence will hold for all such $\lambda$.

In both of these examples, $\Psi$ is a homeomorphism to its image -- and thus injective. Injectivity implies uniqueness of the switch function.

\begin{figure}
\centering
\includegraphics[scale=.7]{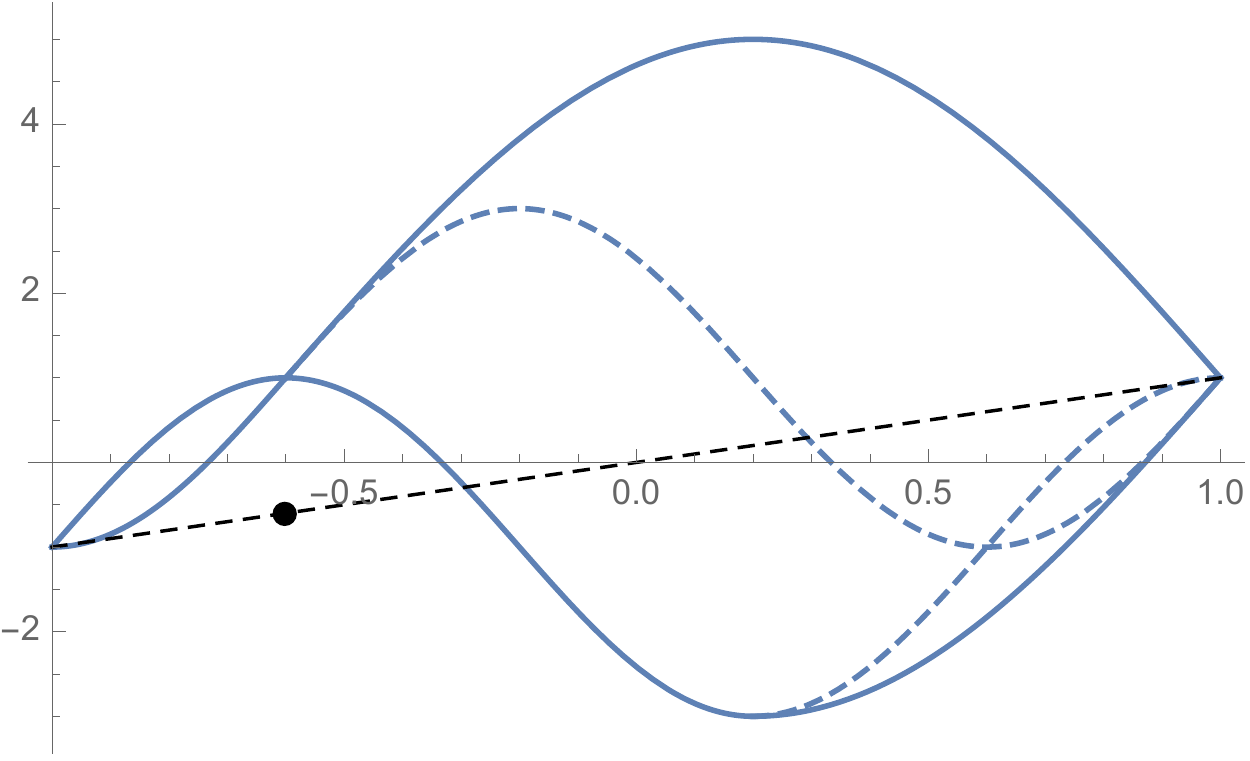}
\caption{The outline of the image of $\Psi$ in the $k=1$ case. The dashed curve is the part of the image of $\partial D_2$ in the interior of the image of $\Psi$. The dashed line segment is parametrized by \eqref{slope}, as in the previous examples, and the marked point is $(-\frac35,-\frac35)$.\label{Counterexample}}
\end{figure}
Figure~\ref{Counterexample} shows the image of $\Psi$ in the $k=1$ case in the proof of Proposition~\ref{Dense}, i.e., $f_1(t)=1$ and 
\[
f_2(t) = \frac{5\pi}{2} \cos\left(\frac{5\pi t}2\right) .
\]
In this case, $\Psi$ is far from injective. It twists and folds $D_2$. The image is not star shaped, and the dashed line segment is not entirely contained in the image. In particular, the point $(-\frac35,-\frac35)$ is outside of the image and this shows that these functions give a counterexample for Question~\ref{Generalized Question} with $\lambda=-\frac35$. Note that in contrast to Figures~\ref{fig2}a and \ref{fig2}b, the image $\Psi(D_2)$ is not invariant under the antipodal map, but in all cases the image of the boundary, $\Psi(\partial D_2)$ is invariant.

Finally, our topological proof of Proposition~\ref{Preliminary case} is no help to actually computing the desired switch function, but we will discuss examples below in which the switch function can be computed explicitly.

\section{An Existence Theorem}
We will now provide a positive answer to the existence part of Question~\ref{Main Question}.
\label{Existence Section}
\begin{lem}
The map $\Sigma_n : \Delta^n\to L^\infty[0,1]$ is \weaks-continuous.
\end{lem}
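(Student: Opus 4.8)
The plan is to reduce weak${}^*$-continuity of $\Sigma_n$ to an elementary statement about the continuity of integrals with varying endpoints. Since the weak${}^*$ topology on $L^\infty[0,1]$ is, by definition, the initial topology making every pairing $\langle f,\;\cdot\;\rangle$ ($f\in L^1[0,1]$) continuous, a map $g:\Delta^n\to L^\infty[0,1]$ is weak${}^*$-continuous if and only if $\mathbf x\mapsto\langle f,g(\mathbf x)\rangle$ is continuous on $\Delta^n$ for every $f\in L^1[0,1]$. So the first step is simply to invoke this universal property and fix an arbitrary $f\in L^1[0,1]$.

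Next I would write down $\langle f,\Sigma_n(\mathbf x)\rangle$ explicitly. Using the defining formula for $\Sigma_n(\mathbf x)$ and the conventions $x_0=0$, $x_{n+1}=1$, together with the antiderivative $F(x)=\int_0^x f(t)\,dt$ from \eqref{IndInt}, one gets
\[
\langle f,\Sigma_n(\mathbf x)\rangle = \sum_{m=0}^{n}(-1)^m\int_{x_m}^{x_{m+1}} f(t)\,dt = \sum_{m=0}^{n}(-1)^m\bigl(F(x_{m+1})-F(x_m)\bigr).
\]
(The value of $\Sigma_n(\mathbf x)$ at the finitely many switch points is irrelevant here, since it does not affect the integral; this is exactly why it is natural to work in $L^\infty$.) The point is that the right-hand side is a fixed polynomial-type expression in the numbers $F(x_0),\dots,F(x_{n+1})$.

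The final step is to observe that $F$ is continuous on $[0,1]$ (indeed absolutely continuous, being an indefinite Lebesgue integral), and that the coordinate maps $\mathbf x\mapsto x_m$ are continuous on $\Delta^n$ for each $m$, with $x_0$ and $x_{n+1}$ constant. Hence $\mathbf x\mapsto F(x_m)$ is continuous for every $m$, and therefore the finite alternating sum above is a continuous real-valued function of $\mathbf x\in\Delta^n$. Since $f$ was arbitrary, $\Sigma_n$ is weak${}^*$-continuous. I do not expect any genuine obstacle: the only thing to be careful about is correctly citing the universal property of the weak${}^*$ (initial) topology, so that one is allowed to check continuity "pairing by pairing" rather than directly; everything else is the continuity of $F$ and of the endpoint coordinates.
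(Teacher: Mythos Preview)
Your proof is correct and is essentially identical to the paper's: both fix $f\in L^1[0,1]$, introduce $F(x)=\int_0^x f(t)\,dt$, express $\langle f,\Sigma_n(\mathbf x)\rangle$ as a finite alternating sum of the values $F(x_m)$, and conclude continuity from the continuity of $F$. The only cosmetic difference is that the paper collapses your telescoping sum to the closed form $(-1)^n F(1)-2\sum_{m=1}^n(-1)^m F(x_m)$.
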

\begin{proof}
Recall that \weaks\ continuity means precisely that if $\Sigma_n: \Delta^n\to L^\infty[0,1]$ is composed with the pairing with any $f\in L^1[0,1]$, then the result is continuous.

Firstly, observe that 
\[
F(s) \eqdef \int_0^s f(t) dt 
\]
defines a continuous function, $F:[0,1]\to\R$. For any $\mathbf x\in\Delta^n$, the pairing of $\Sigma_n(\mathbf x)$ with $f$ is
\[
\langle f,\Sigma_n(\mathbf x)\rangle = (-1)^n F(1) - \sum_{m=1}^n 2(-1)^m F(x_m) ,
\]
which is manifestly continuous.
\end{proof}

Note that $\Sigma_n$ is injective, except on the subset
\[
K_n \eqdef \{\mathbf x\in\Delta^n \mid x_m=x_{m+1} \text{ for some }1\leq m\leq n-1\} \,.
\]
\begin{lem}
\label{Good subset}
\[
\Sigma_n : \Delta^n\smallsetminus K_n \to D_n\smallsetminus D_{n-2}
\]
is a homeomorphism.
\end{lem}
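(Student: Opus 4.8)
The plan is to recognise the map as a restriction of a topological quotient map and to use that a bijective quotient map is a homeomorphism, so that no explicit inverse need be estimated. Since $\Delta^n$ is compact, $L^\infty[0,1]$ with the \weaks\ topology is Hausdorff, and $\Sigma_n$ is continuous by the previous lemma, the surjection $\Sigma_n:\Delta^n\to D_n$ is a closed map, hence a quotient map. The crucial fact is then that $\Delta^n\smallsetminus K_n$ is \emph{saturated} with respect to $\Sigma_n$, equivalently that $\Sigma_n^{-1}(D_{n-2})=K_n$; granting this, the restriction $\Sigma_n:\Delta^n\smallsetminus K_n\to\Sigma_n(\Delta^n\smallsetminus K_n)$ is again a quotient map (a closed quotient map restricted to a saturated set is a quotient map), its image is precisely $D_n\smallsetminus D_{n-2}$, and the only thing left to check is that this restriction is injective.

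The identity $\Sigma_n^{-1}(D_{n-2})=K_n$ is the combinatorial core. For $K_n\subseteq\Sigma_n^{-1}(D_{n-2})$: if $x_m=x_{m+1}$ with $1\le m\le n-1$, the block $[x_m,x_{m+1})$ is empty while its two neighbours both take the value $(-1)^{m-1}$, so deleting the pair $x_m,x_{m+1}$ leaves the function unchanged and displays $\Sigma_n(\mathbf x)$ as lying in $D_{n-2}$ (iterating when several coincidences occur). For the reverse inclusion I would verify, for $\mathbf x\in\Delta^n\smallsetminus K_n$, that $\Sigma_n(\mathbf x)$ has exactly $n$ switches when $0<x_1$ and $x_n<1$, exactly $n-1$ switches when exactly one of $x_1=0$ and $x_n=1$ holds, and exactly $n-2$ switches together with initial value $-1$ when both hold; in every case the number of switches plus $1$ if it starts at $-1$ equals $n$ or $n-1$, whereas every element of $D_{n-2}$ has this same quantity at most $n-2$ (a direct consequence of the Remark following the definition of $D_n$). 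Hence $\mathbf x\notin K_n$ forces $\Sigma_n(\mathbf x)\notin D_{n-2}$. The two inclusions together also give $\Sigma_n(\Delta^n\smallsetminus K_n)=D_n\smallsetminus D_{n-2}$, since $D_n=\Sigma_n(K_n)\cup\Sigma_n(\Delta^n\smallsetminus K_n)$ with the first piece contained in $D_{n-2}$ and the second disjoint from it.

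For injectivity I would write down the inverse explicitly: given $g\in D_n\smallsetminus D_{n-2}$, list its switch points in increasing order --- these, and the essential value of $g$ just to the right of $0$, are well defined for the $L^\infty$-class, since two step-function representatives with finitely many steps of positive length agree off a finite set --- then prepend $0$ exactly when $g(0^+)=-1$ and append $1$ exactly when the list so far has length $n-1$. A direct check shows the result always lies in $\Delta^n\smallsetminus K_n$, maps to $g$, and is the only such tuple, so the restricted map is bijective; a bijective quotient map is open, hence a homeomorphism. (One can instead obtain continuity of the inverse by hand: if $\Sigma_n(\mathbf x^\alpha)\to g=\Sigma_n(\mathbf x)$ with all $\mathbf x^\alpha,\mathbf x\in\Delta^n\smallsetminus K_n$, then by compactness of $\Delta^n$ it suffices that every convergent subnet of $(\mathbf x^\alpha)$ has limit $\mathbf x$; and if $\mathbf x^{\alpha'}\to\mathbf y$ then $\Sigma_n(\mathbf y)=g\notin D_{n-2}$, so $\mathbf y\notin K_n$ and $\mathbf y=\mathbf x$ by injectivity.)

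I expect the second paragraph to be the real work: the careful accounting of how $\Sigma_n(\mathbf x)$ degenerates as coordinates coalesce or reach the endpoints $0$ and $1$, in particular the asymmetry between $x_1=0$, which trades a switch for a flip of the initial sign and so does not reduce that ``complexity'' count, and $x_n=1$, which simply deletes a switch --- and the need to handle several simultaneous coincidences. The topological packaging is entirely standard. The conceptual point behind $\Sigma_n^{-1}(D_{n-2})=K_n$ is that this is exactly what forces the switch points to vary continuously with the switch function on $D_n\smallsetminus D_{n-2}$: the only way a sequence of switch functions with $n$ (or $n-1$) switches can \weaks-converge without its switch points converging is by a collision of adjacent switch points, and every such collision produces an element of $D_{n-2}$, the very set we have removed.
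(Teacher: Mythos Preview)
Your proof is correct and takes a genuinely different route from the paper's.

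The paper argues by hand that the inverse is continuous: for each $\mathbf y\in\Delta^n\smallsetminus K_n$ it exhibits a basic neighbourhood $\Or_{\mathbf y,\varepsilon}$ and shows that its image under $\Sigma_n$ is a finite intersection of sets of the form $\{\sigma:\lvert\langle\chi_I,\sigma\rangle\rvert<c\}$ for suitable short intervals $I$ around the putative switch points, hence \weaks-open. This is concrete but requires separate handling of the endpoint cases $y_1=0$ and $y_n=1$.

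You instead package everything topologically: $\Delta^n$ is compact, the \weaks\ topology is Hausdorff, so $\Sigma_n:\Delta^n\to D_n$ is closed and therefore a quotient map; you then establish the combinatorial identity $\Sigma_n^{-1}(D_{n-2})=K_n$ (via the ``complexity'' invariant, number of switches plus one if the initial sign is $-1$), which makes $\Delta^n\smallsetminus K_n$ a saturated open set, so the restriction is a bijective quotient map, hence a homeomorphism. Your alternative subnet argument is the same idea without the quotient-map vocabulary. What your approach buys is that no explicit \weaks-open sets need to be written down, and the endpoint cases are absorbed into the single complexity count; what the paper's approach buys is that it never needs to verify saturation or invoke any general topology beyond the definition of the \weaks\ topology, and it makes visible exactly which $L^1$ test functions detect the switch positions. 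The paper in fact asserts bijectivity in one line (``It is easy to see that this is a bijection''), so your careful treatment of $\Sigma_n^{-1}(D_{n-2})=K_n$ and of the explicit inverse actually fills in detail that the paper leaves to the reader.
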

\begin{proof}
It is easy to see that this is a bijection. We just need to check that the inverse is continuous by showing that the image of an open set is open. It is sufficient to check this for an arbitrarily small neighborhood of any point.

For any $\mathbf y\in\Delta^n$ and $\varepsilon>0$,
\[
\Or_{\mathbf y,\varepsilon} \eqdef \left\{\mathbf x\in \Delta^n \bigm| \left|x_m-y_m\right|<\varepsilon\ \forall m=1,\dots,n\right\}
\]
is an open neighborhood of $\mathbf y$.

Suppose that $\mathbf y \in \Delta^n \smallsetminus \partial\Delta^n$, i.e., $y_m<y_{m+1}$ for $m=0,\dots,n$. If $\varepsilon>0$ and $y_{m+1}-y_m\geq 2\varepsilon$ for $m=0,\dots,n$, then $\Or_{y,\varepsilon} \subset \Delta^n \smallsetminus K_n$ and
\[
\Sigma_n(\Or_{\mathbf y,\varepsilon}) = \bigcap_{m=1}^n \left\{\sigma\in D_n \bigm| \Abs{\left<\chi_{[y_m-\varepsilon,y_m+\varepsilon]},\sigma\right>} < 2\varepsilon\right\} 
\]
where $\chi_{[y_m-\varepsilon,y_m+\varepsilon]}$ is the function equal to $1$ on that interval and $0$ elsewhere. 

By the definition of the \weaks\ topology, the pairing with $\chi_{[y_m-\varepsilon,y_m+\varepsilon]}$ is continuous. The inverse image of the open interval $(-2\varepsilon,2\varepsilon)$ is therefore open, and a finite intersection of open sets is open. 
This shows that $\Sigma_n(\Or_{\mathbf y,\varepsilon})$ is open.

This argument must be modified slightly to extend to any $\mathbf y\in \Delta^n\smallsetminus K_n$. If $y_1=0$ then we use the set 
\[
\left\{\sigma\in D_n \bigm| \Abs{\left<\chi_{[0,\varepsilon]},\sigma\right>}<\varepsilon\right\}
\]
in the first place, and don't require $y_1-y_0\geq 2\varepsilon$.
If $y_n=1$ then we use 
\[
\left\{\sigma\in D_n \bigm| \Abs{\left<\chi_{[1-\varepsilon,1]},\sigma\right>}<\varepsilon\right\} 
\]
in the last place, and don't require $y_{n+1}-y_n\geq 2\varepsilon$.
\end{proof}

\begin{definition}
$B^n\subset \R^n$ is the closed unit ball. $S^{n-1}\subset B^n$ is the unit sphere. 
\end{definition}
\begin{lem}
For all $n\geq0$, there exists a homotopy equivalence $\varphi_n : (B^n,S^{n-1}) \to (D_n,\partial D_n)$, such that for any $\mathbf z\in S^{n-1}$,
\beq
\label{antipode}
\varphi_n(-\mathbf z)=-\varphi_n(\mathbf z) .
\eeq
\end{lem}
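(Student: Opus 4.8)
\emph{Strategy.} I would argue by induction on $n$. The cases $n=0$ ($D_0$ a point, $\partial D_0=\emptyset$) and $n=1$ (here $\Sigma_1:\Delta^1\to D_1$ is a homeomorphism onto an arc, and $\varphi_1(s)=\Sigma_1(\tfrac{s+1}{2})$ works, with $\varphi_1(-1)=\Sigma_1(0)=-\Sigma_1(1)=-\varphi_1(1)$) are immediate. The inductive step rests on three ingredients. \emph{(a) $D_n$ is contractible.} Since $\Delta^n$ is compact and $D_n$ is Hausdorff (the \weaks\ topology is Hausdorff on the bounded set $D_n$), $\Sigma_n:\Delta^n\to D_n$ is a closed quotient map; and one checks from the explicit definition that $\Sigma_n(\mathbf x)=\Sigma_n(\mathbf x')$ iff $\#\{i:x_i=p\}\equiv\#\{i:x_i'=p\}\pmod 2$ for every $p\in[0,1]$. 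This parity criterion is visibly preserved by the rescaling $\mathbf x\mapsto s\mathbf x$, so the radial contraction of $\Delta^n$ onto $\mathbf 0$ descends — using that $\Sigma_n\times\id_{[0,1]}$ is again a quotient map, $[0,1]$ being locally compact — to a contraction $H:D_n\times[0,1]\to D_n$ of $D_n$ onto the constant function $1=\Sigma_n(\mathbf 0)$.

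\emph{The cofibration facts.} \emph{(b) $\partial D_n\hookrightarrow D_n$ is a closed cofibration.} The parity criterion also gives $\Sigma_n^{-1}(\partial D_n)=\partial\Delta^n$, and $\Sigma_n$ is injective off $\partial\Delta^n$ (since $K_n\subseteq\partial\Delta^n$); hence the square with rows $\partial\Delta^n\hookrightarrow\Delta^n$ and $\partial D_n\hookrightarrow D_n$ and verticals $\Sigma_n$ is a pushout in $\mathrm{Top}$ (the canonical continuous bijection from the compact pushout to Hausdorff $D_n$ is a homeomorphism). Since $\partial\Delta^n\hookrightarrow\Delta^n$ is a closed cofibration and cofibrations are stable under cobase change, so is $\partial D_n\hookrightarrow D_n$; as a standard consequence, a map of pairs between closed cofibrations that is a homotopy equivalence both on total spaces and on subspaces is a homotopy equivalence of pairs. \emph{(c) $\partial D_n$ is a pushout.} Recall that $\partial D_n=D_{n-1}\cup(-D_{n-1})$; both pieces are compact, hence closed in $\partial D_n$, and their intersection is exactly $\partial D_{n-1}$ (the switch functions with at most $n-2$ switches), so $\partial D_n=D_{n-1}\cup_{\partial D_{n-1}}(-D_{n-1})$ — the exact analogue of $S^{n-1}=B^{n-1}_+\cup_{S^{n-2}}B^{n-1}_-$, with the antipodal map interchanging the hemispheres and restricting to the antipode on the equator $S^{n-2}$.

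\emph{The inductive step.} Given $\varphi_{n-1}$ with $\varphi_{n-1}(-\mathbf z)=-\varphi_{n-1}(\mathbf z)$ on $S^{n-2}$, fix homeomorphisms $\pi_\pm:B^{n-1}_\pm\to B^{n-1}$ that are the identity on $S^{n-2}$, and define $\psi_n:S^{n-1}\to\partial D_n$ by $\psi_n\eqdef\varphi_{n-1}\circ\pi_+$ on $B^{n-1}_+$ (image in $D_{n-1}\subseteq\partial D_n$) and $\psi_n(\mathbf w)\eqdef-\varphi_{n-1}(-\pi_-(\mathbf w))$ on $B^{n-1}_-$ (image in $-D_{n-1}$). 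The two definitions agree on $S^{n-2}$ precisely because $\varphi_{n-1}$ respects antipodes there, so $\psi_n$ is well defined, continuous, and antipode-preserving by construction. On each hemisphere $\psi_n$ is $\varphi_{n-1}$ conjugated by homeomorphisms and on the equator it is $\varphi_{n-1}|_{S^{n-2}}$, so all three restrictions are homotopy equivalences by the inductive hypothesis; hence, by the gluing lemma for homotopy equivalences applied to the pushout decompositions in (c) (legitimate since $S^{n-2}\hookrightarrow B^{n-1}_\pm$ and, by (b) at level $n-1$, $\partial D_{n-1}\hookrightarrow\pm D_{n-1}$ are cofibrations), $\psi_n:S^{n-1}\to\partial D_n$ is a homotopy equivalence. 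Finally extend $\psi_n$ radially via $H$: set $\varphi_n(s\mathbf z)\eqdef H(\psi_n(\mathbf z),s)$ for $s\in[0,1]$, $\mathbf z\in S^{n-1}$. This is well defined at $\mathbf 0$ (as $H(\cdot,0)$ is constant), continuous (continuity at $\mathbf 0$ uses compactness of $\partial D_n$), restricts to $\psi_n$ on $S^{n-1}$, sends $S^{n-1}$ into $\partial D_n$, and preserves antipodes there. Since $B^n$ and $D_n$ are contractible, $\varphi_n:B^n\to D_n$ is a homotopy equivalence, while $\varphi_n|_{S^{n-1}}=\psi_n$ is one onto $\partial D_n$; by the consequence of (b), $\varphi_n$ is a homotopy equivalence of pairs, completing the induction.

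\emph{Main obstacle.} The delicate part is the homotopy-theoretic gluing in (b) and (c): confirming that $\partial D_n\hookrightarrow D_n$ is a cofibration and that $\partial D_n$ genuinely is the pushout $D_{n-1}\cup_{\partial D_{n-1}}(-D_{n-1})$, so that the gluing lemma and the ``absolute plus boundary implies relative'' principle both apply. These come down to a few point-set verifications — that $\Sigma_n$ and $\Sigma_n\times\id_{[0,1]}$ are quotient maps, that $\Sigma_n^{-1}(\partial D_n)=\partial\Delta^n$, that $D_{n-1}\cap(-D_{n-1})=\partial D_{n-1}$, and that the relevant continuous bijections out of compact spaces into Hausdorff targets are homeomorphisms — all routine once the parity description of $\Sigma_n(\mathbf x)=\Sigma_n(\mathbf x')$ is in hand. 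By contrast, the contractibility of $D_n$ and the radial extension of $\psi_n$ are straightforward.
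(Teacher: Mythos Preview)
Your argument is correct and, in several respects, cleaner than the paper's. Both proofs share the same skeleton: induction on $n$, the same hemisphere construction of $\psi_n:S^{n-1}\to\partial D_n$ from $\varphi_{n-1}$, and the observation (via gluing over the pushout $\partial D_n=D_{n-1}\cup_{\partial D_{n-1}}(-D_{n-1})$) that $\psi_n$ is a homotopy equivalence. Where they diverge is in how the pair equivalence is completed. The paper takes a homological detour: it shows $K_n$ and $D_{n-2}$ are contractible, uses Lemma~\ref{Good subset} and excision to get an isomorphism $H_{n-1}(\partial\Delta^n)\to H_{n-1}(\partial D_n)$, invokes degree classification of self-maps of spheres to conclude $\Sigma_n|_{\partial\Delta^n}$ is a homotopy equivalence, reads off $(D_n,\partial D_n)\simeq(B^n,S^{n-1})$ from the cell-attachment description, and only then argues abstractly that a $\varphi_n$ extending $\psi_n$ exists. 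You bypass all of this: you exhibit an explicit contraction of $D_n$ (the radial map $\mathbf x\mapsto s\mathbf x$ respects the parity fibres of $\Sigma_n$), establish directly that $\partial D_n\hookrightarrow D_n$ is a closed cofibration via the pushout square over $\partial\Delta^n\hookrightarrow\Delta^n$, define $\varphi_n$ concretely by coning $\psi_n$ along the contraction, and conclude the pair equivalence from the standard cofibration criterion. Your route avoids homology entirely and is more constructive; the paper's route makes the homotopy type of $(D_n,\partial D_n)$ manifest earlier, at the cost of more machinery.

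One small point to tighten: you fix $\pi_\pm:B^{n-1}_\pm\to B^{n-1}$ only as homeomorphisms that restrict to the identity on $S^{n-2}$, but the claim ``antipode-preserving by construction'' additionally needs $\pi_-(-\mathbf w)=-\pi_+(\mathbf w)$ for $\mathbf w\in B^{n-1}_+$ (so that $\psi_n(-\mathbf w)=-\varphi_{n-1}(-\pi_-(-\mathbf w))=-\varphi_{n-1}(\pi_+(\mathbf w))=-\psi_n(\mathbf w)$). Taking both $\pi_\pm$ to be vertical projection onto the first $n-1$ coordinates does the job, and indeed then your $\psi_n$ coincides exactly with the paper's formula $\psi_n(\mathbf z,\pm\sqrt{1-\lVert\mathbf z\rVert^2})=\pm\varphi_{n-1}(\pm\mathbf z)$.
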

\begin{proof}
We will prove this by induction. The base cases $n=0$ and $n=1$ follow immediately if we define the maps by $\varphi_0(0)=+1$ and
\[
\varphi_1(z) = \Sigma_1\left(\tfrac12[z+1]\right) .
\]

Now, let $n\geq2$ and suppose that the proposition is true up to $n-1$. 

Note that $\partial D_n = D_{n-1} \cup (-D_{n-1})$ and the intersection is precisely $\partial D_{n-1}$. Define a map $\psi_n : S^{n-1} \to \partial D_n$ by
\[
\psi_n(\mathbf z,\pm\sqrt{1-\lVert \mathbf z\rVert^2}) = \pm \varphi_{n-1}(\pm \mathbf z)
\]
for $\mathbf z\in B^{n-1}$.  This is well defined, because for $\mathbf z\in S^{n-2}$, eq.~\eqref{antipode} shows that the $+$ and $-$ formulae agree.

The induction hypothesis that $\varphi_{n-1}:(B^n,S^{n-1})\to (D_n,\partial D_n)$ is a homotopy equivalence implies that $\psi_n:S^{n-1}\to \partial D_n$ is a homotopy equivalence.

Note that $K_n\subset \partial \Delta^n$ is the union of a proper subset of closed faces. It is therefore compact and contractable. 
The image $\Sigma_n(K_n)= D_{n-2}$ is compact. By the induction hypothesis, this is homotopic to $B^{n-2}$, so it is also contractable.

By Lemma~\ref{Good subset}, $\Sigma_n$ gives a homeomorphism from the quotient $\partial \Delta^n/K_n$ to $D_n/D_{n-2}$. This gives an isomorphism in  homology $H_{n-1}(\partial \Delta^n/K_n)\to H_{n-1}(D_n/D_{n-2})$. By excision, this is equivalent to an isomorphism $H_{n-1}(\partial \Delta^n,K_n)\to H_{n-1}(D_n,D_{n-2})$. Because $K_n$ and $D_{n-2}$ are contractable (and using the exact sequence) this is equivalent to an isomorphism $H_{n-1}(\partial\Delta^n)\to H_{n-1}(\partial D_n)$.

Now, since $\partial \Delta^n\cong S^{n-1}$ and we have shown that $\partial D_n\simeq S^{n-1}$, this is equivalent to a map between $n-1$ dimensional spheres. Such maps are classified up to homotopy by their action on $H_{n-1}$, therefore $\Sigma_n: \partial\Delta^n\to \partial D_n$ is a homotopy equivalence.

Because $\Sigma_n:\Delta^n\to D_n$ is a homeomorphism on the interior, $D_n$ can be constructed by attaching an $n$-cell to $\partial D_n$ by $\Sigma_n: \partial\Delta^n\to \partial D_n$. That is, $D_n$ is homeomorphic to the quotient of the disjoint union $\Delta^n\cup \partial D_n$ by the equivalence relation generated by $x\sim \Sigma_n(x)$ for all $x\in \partial \Delta^n$. Because this attaching map is a homotopy equivalence, this proves that there is a homotopy equivalence $(D_n,\partial D_n)\simeq (B^n,S^{n-1})$.

Finally, we want to choose a specific homotopy equivalence $\varphi_n : (B^n,S^{n-1}) \to (D_n,\partial D_n)$ whose restriction to $S^{n-1}$ is the $\psi_n$ constructed above. This is equivalent to a homotopy rel  $S^{n-2}$ between $\varphi_{n-1}$ and the map defined by $-\varphi_{n-1}(-\mathbf z)$. This exists because $(B^n,S^{n-1})$ is $(n-1)$-connected.
\end{proof}

\begin{thm}
\label{Main theorem}
If $f_1,\dots,f_n\in L^1[0,1]$, then there exists a function in $D_n$ that is orthogonal to all of $f_1,\dots, f_n$.
\end{thm}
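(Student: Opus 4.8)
The plan is to carry out the argument of Proposition~\ref{Preliminary case} in dimension $n$, with the winding-number step replaced by its higher-dimensional analogue (the Borsuk--Ulam/Borsuk theorem) and the explicit homeomorphism of Lemma~\ref{Hom lemma} replaced by the homotopy equivalence $\varphi_n:(B^n,S^{n-1})\to(D_n,\partial D_n)$ of the preceding Lemma. First I would define $\Psi:D_n\to\R^n$ exactly as in \eqref{Phi}, $\Psi(\sigma)=(\langle f_1,\sigma\rangle,\dots,\langle f_n,\sigma\rangle)$. By the definition of the \weaks\ topology this is continuous, and since the pairing \eqref{InnProd} is linear in $\sigma$ we have $\Psi(-\sigma)=-\Psi(\sigma)$ whenever both $\sigma$ and $-\sigma$ lie in $D_n$; in particular $\Psi$ is odd on $\partial D_n=D_{n-1}\cup(-D_{n-1})$. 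A switch function in $D_n$ orthogonal to all of $f_1,\dots,f_n$ is exactly a zero of $\Psi$, so I would argue by contradiction, assuming $0\notin\Psi(D_n)$. (For $n=1$ the assertion is the Intermediate-Value observation of the Introduction, so assume $n\ge2$.)

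Composing with $\varphi_n$ yields a continuous map $g\eqdef\Psi\circ\varphi_n:B^n\to\R^n$. Since $\varphi_n(B^n)\subseteq D_n$ and $\Psi$ is assumed nowhere zero on $D_n$, the map $g$ in fact takes values in $\R^n\smallsetminus\{0\}$. Moreover $\varphi_n$ sends $S^{n-1}$ into $\partial D_n$ and satisfies \eqref{antipode}, so for every $\mathbf z\in S^{n-1}$ we get $g(-\mathbf z)=\Psi(-\varphi_n(\mathbf z))=-\Psi(\varphi_n(\mathbf z))=-g(\mathbf z)$; hence $g|_{S^{n-1}}:S^{n-1}\to\R^n\smallsetminus\{0\}$ is odd (antipode-preserving).

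Now compose with the radial retraction $\R^n\smallsetminus\{0\}\to S^{n-1}$, $v\mapsto v/\Norm v$: this turns $g$ into a continuous map $G:B^n\to S^{n-1}$ whose restriction $h\eqdef G|_{S^{n-1}}:S^{n-1}\to S^{n-1}$ is odd. On one hand, $h$ extends over $B^n$, namely by $G$, so $h$ is nullhomotopic. On the other hand, by Borsuk's theorem an odd continuous self-map of $S^{n-1}$ has odd degree (this is the $n$-dimensional replacement for the ``odd winding number'' in the $n=2$ case), so $h$ has nonzero degree and is therefore essential. This contradiction proves $0\in\Psi(D_n)$, i.e.\ some $\sigma\in D_n$ is orthogonal to $f_1,\dots,f_n$.

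The hard part of the whole argument is not this final deduction but the preceding Lemma, whose inductive construction of $\varphi_n$ --- necessitated by the fact that an explicit homeomorphism $B^n\cong D_n$ is available only for small $n$ --- is the real content, and which I take as given. Beyond it, only two points need care here: (i) we have merely a \emph{homotopy equivalence} $\varphi_n$, not a homeomorphism, so every topological step must be homotopy-invariant, which the degree argument is; and (ii) oddness must survive precomposition into $\Psi$, for which it is essential that $\varphi_n$ maps the boundary sphere into $\partial D_n$, the locus on which $-\sigma\in D_n$ and on which $\Psi$ is genuinely odd --- for a general $\sigma\in D_n$ the reflection $-\sigma$ need not lie in $D_n$ at all.
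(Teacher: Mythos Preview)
Your argument is correct and is essentially identical to the paper's own proof: both compose $\varphi_n$ with the pairing map into $\R^n$, assume the image misses $0$, normalize to get an odd self-map of $S^{n-1}$ which is nullhomotopic (since it extends over $B^n$) yet of odd degree by Borsuk's theorem, and derive a contradiction. The only cosmetic difference is that the paper defines the composite $\Phi=\Psi\circ\varphi_n$ directly rather than first naming $\Psi$ on $D_n$, and cites the odd-degree fact as Theorem~3 of \cite{Whitt}.
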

\begin{proof}
Denote $\mathbf f\eqdef(f_1,\dots,f_n)\in L^1([0,1],\R^n)$. Define 
\[
\Phi :B^n\to \R^n
\]
by $\Phi(\mathbf z) \eqdef \langle \mathbf f,\varphi_n(\mathbf z)\rangle$. We need to prove that $0$ is in the image of $\Phi$.

We will prove this by contradiction, so suppose that $\Phi(B^n)\subset \R^n_*$, where $\R^n_*$ is the set of nonzero vectors. Because $B^n$ is contractible, $\Phi : S^{n-1}\to \R^n_*$ is homotopic to a constant map. This means that $\pi\circ \Phi : S^{n-1}\to S^{n-1}$ (where $\pi:\R^n_*\to S^{n-1}$, $\pi(\mathbf z) = \mathbf z/\Norm{\mathbf z}$) has degree $0$.

On the other hand, for $\mathbf z\in S^{n-1}$, $\Phi(-\mathbf z)=-\Phi(\mathbf z)$. This is the hypothesis of Theorem 3 in \cite{Whitt}, which states that the degree of $\pi\circ \Phi : S^{n-1}\to S^{n-1}$ must then be odd. 

This is a contradiction, therefore for some $\mathbf z\in B^n$,
\[
0 = \Phi(\mathbf z) = \langle \mathbf f,\varphi_n(\mathbf z)\rangle .
\]
This $\varphi_n(\mathbf z)\in D_n \subset L^\infty[0,1]$ is the desired function.
\end{proof}

\section{The Polynomial Case}
\label{Sect:Poly}
Now, we return to the more general Question~\ref{Generalized Question} but in the specific example where the $n$ functions are $1, x, x^2, \dots, x^{n-1}$ (or any other linearly independent set of linear combinations of these functions).   For this example, we will show existence and uniqueness and also compute the switch function.

For any $\lambda\in(-1,1)$, we wish to find a switch function, $\sigma$, with no more than $n$ switch points, such that
\[
0 = \langle p,\sigma-\lambda\rangle
\]
for any polynomial,  $p$, of degree $\leq n-1$. Note that if $\sigma=\Sigma_n(\mathbf x)$, then this condition is equivalent to the system of equations,
\begin{equation}
\label{thetasumlow}
x_n^k-x_{n-1}^k+x_{n-2}^k- \dots + (-1)^{n-1}x_1^k=\theta, \quad (1 \le k \le n) ,
\end{equation}
in which $\theta=(1+(-1)^{n-1}\lambda)/2 \in (0,1)$. 

We first give a simple, explicit solution to the computation part of Question~\ref{Main Question} (the case $\theta=\frac12$):
\begin{prop}
\[
x_j = \cos^2\left(\frac{[n+1-j]\pi}{2n+2}\right)
\]
is a solution of eq.~\eqref{thetasumlow} for $\theta=\frac12$ ($\lambda=0$) hence
\[
0 = \langle p,\Sigma_n(\mathbf x)\rangle
\]
for any polynomial, $p$, of degree $\leq n-1$.
\end{prop}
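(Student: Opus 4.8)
The plan is to recast the system \eqref{thetasumlow} with $\theta=\tfrac12$ as an orthogonality statement and then linearize it with a trigonometric substitution. Using the formula for $\langle f,\Sigma_n(\mathbf x)\rangle$ established in the proof that $\Sigma_n$ is \weaks-continuous (with $f(t)=t^k$, so $F(s)=s^{k+1}/(k+1)$), a short telescoping computation shows that \eqref{thetasumlow} with $\theta=\tfrac12$ is equivalent to
\[
\int_0^1 \Sigma_n(\mathbf x)(t)\,t^k\,dt = 0 \qquad (k=0,1,\dots,n-1),
\]
i.e.\ to $\langle p,\Sigma_n(\mathbf x)\rangle=0$ for every polynomial $p$ of degree $\le n-1$. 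So it suffices to verify this family of integrals for the claimed point $\mathbf x$. The first observation is the identity $\cos^2\!\big(\tfrac{[n+1-j]\pi}{2n+2}\big)=\sin^2\!\big(\tfrac{j\pi}{2(n+1)}\big)$, so the proposed switch points are exactly $x_j=\sin^2\psi_j$ with the \emph{equally spaced} angles $\psi_j\eqdef\tfrac{j\pi}{2(n+1)}$ for $j=0,1,\dots,n+1$ (and $\psi_0=0$, $\psi_{n+1}=\tfrac\pi2$ match $x_0=0$, $x_{n+1}=1$).

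Next I would substitute $t=\sin^2\psi$, so $dt=\sin2\psi\,d\psi$ and $t$ increases monotonically from $0$ to $1$ as $\psi$ runs over $[0,\tfrac\pi2]$. Since $2(n+1)\psi$ lies in $(m\pi,(m+1)\pi)$ precisely when $\psi\in(\psi_m,\psi_{m+1})$, we get $\Sigma_n(\mathbf x)(\sin^2\psi)=\sgn\sin\!\big(2(n+1)\psi\big)\eqdef w(\psi)$ almost everywhere, and hence
\[
\int_0^1 \Sigma_n(\mathbf x)(t)\,t^k\,dt=\int_0^{\pi/2} w(\psi)\,\sin^{2k}\!\psi\,\sin 2\psi\,d\psi .
\]
Expanding $\sin^{2k}\psi$ as a cosine polynomial in $2\psi$ and multiplying by $\sin2\psi$ shows that $\sin^{2k}\psi\,\sin2\psi$ is a linear combination of the functions $\sin(2l\psi)$ with $1\le l\le k+1$; for $k\le n-1$ these are all among $\sin(2l\psi)$, $1\le l\le n$. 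So the whole statement reduces to the single family of identities
\[
\int_0^{\pi/2} w(\psi)\,\sin(2l\psi)\,d\psi=0 \qquad (l=1,\dots,n).
\]

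To prove these I would integrate $w$ term by term over the intervals $(\psi_m,\psi_{m+1})$, obtaining a telescoping sum of values $\cos\tfrac{lm\pi}{n+1}$ that collapses to $\tfrac{1}{2l}\big(1+(-1)^{n+l+1}+2\sum_{m=1}^n(-1)^m\cos\tfrac{lm\pi}{n+1}\big)$. Relating the partial alternating cosine sum to $\operatorname{Re}\sum_{m=0}^{n+1}z^m$ with $z=-e^{il\pi/(n+1)}$, the vanishing reduces to checking the elementary identity $z^{n+2}=(-1)^{n+l+1}z$: when $n+l$ is odd this forces $\sum_{m=0}^{n+1}z^m=1$, and when $n+l$ is even it forces $\sum_{m=0}^{n+1}z^m=-\tfrac{z+1}{z-1}$, which is purely imaginary because $|z|=1$ and $z\neq\pm1$ for $1\le l\le n$; in either case the real part conspires to make the integral vanish. (One can shortcut the $n+l$ even case by the substitution $\psi\mapsto\tfrac\pi2-\psi$, under which $w(\psi)\sin(2l\psi)$ picks up the factor $(-1)^{n+l+1}$.) The one place that needs care is this last step — keeping the parity bookkeeping straight and confirming that the degenerate case $z=1$ (which would break the geometric-series formula) never occurs for $1\le l\le n$ — but it is routine.
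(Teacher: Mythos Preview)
Your argument is correct, and the route is genuinely different from the paper's. The paper verifies \eqref{thetasumlow} directly: it expands $\cos^{2k}t$ via the binomial theorem as a linear combination of $\cos 2mt$, and then evaluates the alternating sum $\sum_{j=1}^n(-1)^{j-1}\cos\!\big(\tfrac{jk\pi}{n+1}\big)$ using a geometric series --- the same identity you reach at the end. Your proof instead passes to the integral form, makes the substitution $t=\sin^2\psi$, and observes that the switch function $\Sigma_n(\mathbf x)$ becomes the square wave $\sgn\sin\!\big(2(n+1)\psi\big)$ in the $\psi$ variable; the problem then reduces to the Fourier-type orthogonality of this square wave against $\sin(2l\psi)$ for $l\le n$. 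The core trigonometric computation is essentially identical in both proofs, but your change-of-variables framing explains \emph{why} these particular switch points arise: they are the equally spaced nodes $\psi_j=\tfrac{j\pi}{2(n+1)}$, and a period-$(n{+}1)$ square wave is automatically orthogonal to lower harmonics. The parity shortcut $\psi\mapsto\tfrac\pi2-\psi$ for the case $n+l$ even is a nice touch not present in the paper. One small point worth making explicit in a write-up: the linear span of $\{\sin^{2k}\psi\sin2\psi:0\le k\le n-1\}$ equals the span of $\{\sin2l\psi:1\le l\le n\}$ (not merely a subspace), which justifies reducing to exactly those $n$ integrals; this follows since $\sin^{2k}\psi\sin2\psi$ contains $\sin(2(k{+}1)\psi)$ with nonzero coefficient.
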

\begin{proof}
First notice that for $k\in\Z$, 
\[
\sum_{j=1}^n (-1)^{j-1}\cos\left(\frac{jk\pi}{n+1}\right) = \operatorname{Re}\frac{e^{ik\pi/(n+1)}-(-1)^{k+n}}{1+e^{ik\pi/(n+1)}} 
= \begin{cases}
0 & k+n \text{ even}\\
1 & k+n\text{ odd},
\end{cases}
\]
because, for $k+n$ is even, we have
\[
\operatorname{Re} i \tan\left(\frac{k\pi}{2n+2}\right) = 0 .
\]
Next, by the binomial theorem,
\[
\cos^{2k}t = 2^{-2k}\sum_{m=-k}^{k}\binom{2k}{k-m}e^{2imt}
=2^{-2k}\sum_{m=-k}^{k}\binom{2k}{k-m}\cos 2mt ,
\]
and by putting $t=0,\frac\pi2$ in this identity, we see that 
\[
1 = 2^{-2k}\sum_{m=-k}^{k}\binom{2k}{k-m}
\]
and
\[
0 = 2^{-2k}\sum_{m=-k}^{k}\binom{2k}{k-m} (-1)^m ,
\]
hence
\[
 2^{-2k}\sum_{k\;\mathrm{odd}}\binom{2k}{k-m} = 2^{-2k}\sum_{k\;\mathrm{even}} \binom{2k}{k-m} = \frac12 .
\]
We need to compute
\begin{multline*}
x_n^k-x_{n-1}^k+x_{n-2}^k- \dots + (-1)^{n-1}x_1^k 
= \sum_{j=1}^n (-1)^{n-j} x_j^k \\
= \sum_{j=1}^n (-1)^{n-j} \cos^{2k}\left(\frac{[n+1-j]\pi}{2n+2}\right) 
= \sum_{j=1}^n (-1)^{j-1} \cos^{2k}\left(\frac{j\pi}{2n+2}\right)\\
= \sum_{j=1}^n (-1)^{j-1}2^{-2k}\sum_{m=-k}^{k}\binom{2k}{k-m}\cos \left(\frac{mj\pi}{n+1}\right) 
\end{multline*}
By the first identity, this equals
\[
2^{-2k}\sum_{k+n\;\mathrm{odd}} \binom{2k}{k-m} = \frac12 ,
\]
which verifies eq.~\eqref{thetasumlow} in this case.
\end{proof}

For Question~\ref{Generalized Question} (general $\theta$) our strategy is to compute a polynomial whose roots are $x_1,\dots,x_n$.

\subsection{Preliminary remarks} 
It is helpful to consider first how one can solve the system of equations 
\begin{equation}
\label{thetasumplus}
\xi_n^k+\xi_{n-1}^k+\xi_{n-2}^k+ \dots + \xi_1^k=\theta, \quad (1 \le k \le n)
\end{equation}
which resemble equations \eqref{thetasumlow} except that, in place of the alternating signs, all the signs are positive.   To do this, we may seek an order-$n$ polynomial, $P(x)$ such that $\xi_1 \dots \xi_n$ solve \eqref{thetasumplus} if and only if they are its roots. We can assume that this is monic, so that
\[
P(x) = \prod_{j=1}^n (x-\xi_j) .
\]

\begin{definition}
For any polynomial, $Q$, of degree $m$ the \emph{reciprocal polynomial} is
\begin{equation*}
Q^*(x) \eqdef x^mQ(1/x)
\end{equation*}
so that the coefficients of $Q^*$ are those of $Q$ in reverse order. 
\end{definition} 

We use this notation throughout this section.   
 
With this notation, and using the Taylor expansion of the natural logarithm, 
\begin{align*}
P^*(x)&=\prod_{j=1}^n(1-\xi_j x) = \exp\left\{\sum_{j=1}^n\log(1-\xi_j x)\right\} \\
&= \exp(-s_1 x-\half s_2x^2 - \tfrac{1}{3}s_3x^3 - \dots)
\end{align*}
where
\begin{equation*}
s_k=\xi_1^k+\xi_2^k+\dots +\xi_n^k.
\end{equation*}
Equations \eqref{thetasumplus} state that $s_k=\theta$ for $k=1,\dots,n$, so
\begin{align}
P^*(x)&=\exp\left\{-\theta x - \tfrac12\theta x^2 - \dots - \tfrac1n\theta x^n + O(x^{n+1})\right\} \nonumber \\
\label{plus}
&=(1-x)^\theta + O(x^{n+1})\\
&=1-\theta x + \binom{\theta}{2}x^2-\binom{\theta}{3}x^3 + \dots (-1)^{n}\binom{\theta}{n}x^n + O(x^{n+1}) . \nonumber
\end{align}
Because $P^*$ is a polynomial of degree $n$,  it is clear that the $O(x^{n+1})$ term in the last line must actually vanish.   So $P(x)$ is determined uniquely to be 
\begin{equation*}
P(x)=x^n-\theta x^{n-1} + \binom{\theta}{2}x^{n-2}-\binom{\theta}{3}x^{n-3} + \dots (-1)^{n}\binom{\theta}{n}
\end{equation*}
and we may conclude that \eqref{thetasumplus} has a  solution $(\xi_1, \dots \xi_n)$ -- unique up to permutations -- consisting of the roots of this $P(x)$.

\subsection{Construction}
  To adapt the method explained above to Equations~\eqref{thetasumlow} we must deal with the alternating signs.

\begin{lem}
\label{Signs}  
Suppose that $0<\theta<1$ and $\delta_i=\pm1$ for $i=1,\dots,n$. If  $0 \le x_0 \le x_1 \le \dots \le x_n \le 1$ satisfy
\begin{equation}
\label{altdelta}
\delta_n x_n^k + \delta_{n-1} x_{n-1}^k + \dots + \delta_1x_1^k=\theta \qquad (1 \le k \le n)
\end{equation}
then $\delta_i=(-1)^{n-i}$ and $0<x_1<x_2<\dots<x_n<1$.
\end{lem}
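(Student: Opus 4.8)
The plan is to translate the hypothesis \eqref{altdelta} into a statement about a discrete signed measure, and then invoke two standard facts: a nonzero measure that annihilates every polynomial of degree $\le n-1$ must be supported on at least $n+1$ points, and when it is supported on exactly $n+1$ points its weights strictly alternate in sign. First I would reformulate: multiplying \eqref{altdelta} by the coefficient $c_k$ of a general polynomial $p(x)=\sum_{k=1}^n c_k x^k$ (with $p(0)=0$, $\deg p\le n$) and summing over $k$ gives $\sum_{i=1}^n \delta_i\,p(x_i)=\theta\,p(1)$. Writing $p(x)=x\,q(x)$ with $\deg q\le n-1$, this is exactly the assertion that the signed measure $\mu$ on $[0,1]$ placing mass $\delta_i x_i$ at $x_i$ (for $i=1,\dots,n$) and mass $-\theta$ at $1$ satisfies $\int q\,d\mu=0$ for every real polynomial $q$ of degree $\le n-1$. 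I would note at once that $\mu$ carries no mass at $0$, and that the total mass it assigns to the point $1$ is an integer minus $\theta$, hence nonzero since $0<\theta<1$; so $\mu\neq0$ and $1$ is its largest support point.

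Next I would pin down the support. After collecting terms, write $\mu=\sum_{j=1}^{s}a_j\varepsilon_{z_j}$ with $0<z_1<\dots<z_s=1$ and every $a_j\neq0$ (here $\varepsilon_z$ denotes unit mass at $z$). Since the atoms of $\mu$ lie among the $n+1$ numbers $x_1,\dots,x_n,1$ and none lies at $0$, we get $s\le n+1$, with equality possible only when $x_1,\dots,x_n$ are pairwise distinct, all positive, and all strictly below $1$. Conversely, a Vandermonde determinant (picking moments of orders $0,\dots,s-1$) shows that the map sending the weight vector of a measure supported on $s\le n$ points to its moments of orders $0,\dots,n-1$ is injective; since $\mu\neq0$ annihilates $1,x,\dots,x^{n-1}$, this forces $s\ge n+1$. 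Hence $s=n+1$, and this already yields the first conclusion $0<x_1<x_2<\dots<x_n<1$.

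For the signs, $\mu$ is now supported on the $n+1$ distinct points $p_0=x_1<\dots<p_{n-1}=x_n<p_n=1$, with weights $a_j=\delta_{j+1}x_{j+1}$ for $0\le j\le n-1$ and $a_n=-\theta$. If two consecutive weights $a_j,a_{j+1}$ had the same sign, I would test $\mu$ against $q(x)=\prod_{k\neq j,\,j+1}(x-p_k)$, a polynomial of degree $n-1$: then $\int q\,d\mu=a_jq(p_j)+a_{j+1}q(p_{j+1})$ since $q$ vanishes at the other $p_k$, and a sign count shows that $q(p_j)$ and $q(p_{j+1})$ have the \emph{same} sign (each product has $j$ positive and $n-j-1$ negative factors), so the integral is nonzero, contradicting $\deg q\le n-1$. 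Therefore the $a_j$ strictly alternate; since $a_n=-\theta<0$ we get $\sgn a_j=(-1)^{\,n-j+1}$, and because $x_{j+1}>0$ this gives $\delta_{j+1}=(-1)^{\,n-j+1}=(-1)^{\,n-(j+1)}$, i.e.\ $\delta_i=(-1)^{n-i}$ for all $i$.

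The routine parts are the reformulation and the two Vandermonde computations. The parts that require care are the bookkeeping that squeezes $s$ to exactly $n+1$ (and extracts strict distinctness together with the open-interval containment from it) and the sign count in the last step. I expect the main obstacle to be organizing this bookkeeping so that all the degenerate possibilities --- repeated $x_i$, an $x_i$ equal to $0$, or an $x_i$ equal to $1$ --- are excluded uniformly rather than through a proliferation of cases.
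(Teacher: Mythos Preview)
Your proof is correct, and it is a cleaner reorganisation of the paper's argument rather than a fundamentally different idea. The paper proceeds by an explicit reduction: it strips out indices with $x_i=0$, cancels pairs with equal $x_i$ but opposite $\delta_i$, splits into two cases according to whether some surviving $x_i$ equals $1$ with $\delta_i=+1$, and in each case constructs a separating polynomial $D$ whose degree is forced to be $n$. You achieve the same squeeze in one stroke by passing to the signed measure $\mu$ and invoking the Vandermonde injectivity bound $s\ge n+1$; the degenerate possibilities (repeated $x_i$, $x_i=0$, $x_i=1$) are then all excluded by the single observation that $s=n+1$ is only attainable when the $x_i$ are distinct and lie in $(0,1)$, with your ``integer minus $\theta$'' remark ensuring the atom at $1$ never disappears. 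Your final alternation step---testing against $\prod_{k\ne j,j+1}(x-p_k)$---is essentially the same separating-polynomial device the paper uses, just deployed after distinctness is already in hand rather than simultaneously with it. The one thing the paper's more combinatorial formulation buys is that it extends with almost no change to the case where the exponents $1,\dots,n$ are replaced by an arbitrary set $\mathcal I\subset\mathbb N$ of size $n$ (their Corollary~\ref{Sequence}); your Vandermonde step generalises immediately (generalised Vandermonde on distinct positive nodes), but your alternation polynomial $q$ would need to be replaced by one using only exponents from $\mathcal I$, which requires a Descartes'-rule argument as in the paper.
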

\begin{proof}
Define $x_0=0$, $x_{n+1}=1$, and $\delta_{n+1}=-1$.  

Note that eqs.~\eqref{altdelta} mean precisely that for any polynomial, $D$, of degree $\leq n-1$,
\[
\sum_{i=1}^n \delta_i x_i D(x_i) = \theta D(1) .
\]

For some polynomial $D$ (of any degree) and a set $T\subset\{1,\dots,n\}$, consider the sum
\[
\sum_{i\in T} \delta_i x_i D(x_i) .
\]
Starting from $T=\{1,\dots,n\}$, we can simplify this set in two ways without changing the sum:
\begin{itemize}
\item
If $i\in T$ and $x_i=0$, then remove $i$ from $T$ (because the $i$ term contributes nothing to the sum).
\item
If $i,j\in T$ with $x_i=x_j$ and $\delta_i\neq\delta_j$ then remove $i$ and $j$ from $T$ (because the $i$ and $j$ terms cancel).
\end{itemize}
Apply these steps iteratively until we have a set $T$ such that for all $i,j\in T$, $x_i\neq0$ and $x_i=x_j\implies\delta_i=\delta_j$, and
\[
\sum_{i\in T} \delta_i x_i D(x_i) = \sum_{i=1}^{n} \delta_i x_i D(x_i) .
\]
Now consider 2 possible cases.

\emph{Case 1: $x_j=\delta_j=1$ for some $j\in T$.} Assume without loss of generality that that is $n\in T$.
For each consecutive $i,j\in T$ such that $x_i<x_j$ and $\delta_i\neq\delta_j$, choose a number in the open interval $(x_i,x_j)$. Construct a polynomial $D$ with these numbers as its (simple) roots and choose the overall sign so that $D(1)>0$. In this way, we have $\delta_i D(x_i)>0$ for all $i\in T$, so
\begin{align*}
0 &< (1-\theta)D(1) + \sum_{i\in T,\neq n} \delta_i x_i D(x_i) =-\theta D(1) + \sum_{i\in T} \delta_i x_i D(x_i) \\
&\quad= -\theta D(1) + \sum_{i=1}^n \delta_i x_i D(x_i) .
\end{align*}

\emph{Case 2: Otherwise.}
For each consecutive $i,j\in T\cup\{n+1\}$ such that $x_i<x_j$ and $\delta_i\neq\delta_j$, choose a number in the open interval $(x_i,x_j)$. Construct a polynomial $D$ with these numbers as its (simple) roots and choose the overall sign so that $D(1)<0$. In this way, we have $\delta_i D(x_i)>0$ for all $i\in T\cup\{n+1\}$, so
\[
0 < -\theta D(1) + \sum_{i\in T} \delta_i x_i D(x_i) = -\theta D(1) + \sum_{i=1}^n \delta_i x_i D(x_i) .
\]

In either case, if $\deg D\leq n-1$, then the last expression is $0$, which is a contradiction. Therefore $\deg D=n$, but this requires $T=\{1,\dots,n\}$, with $x_i<x_{i+1}$ and $\delta_i\neq \delta_{i+1}$ for all $i=1,\dots,n$.
\end{proof}

This is equivalent to:
\begin{cor}
\label{Plusminus}
If $0<\theta<1$, and 
\begin{equation}
\label{thetasumm}
\xi_1^k+\xi_2^k+ \dots + \xi_{n-m}^k - (\eta_1^k + \eta_2^k + \dots + \eta_m^k)=\theta, \qquad (1 \le k \le n)
\end{equation}
such that $\xi_i,\eta_i\in[0,1]$ for all $i$, then $m=\lfloor\frac{n}2\rfloor$ and the sets of $\xi_i$'s and $\eta_i$'s interleave in the sense that if they are labelled in increasing order then 
\begin{equation}
\label{interleave}
\begin{split}
0<\eta_1 < \xi_1 < \eta_2 < \xi_2 <  \dots < \eta_m < \xi_m<1 \qquad& (n=2m) \\
0<\xi_1 < \eta_1 < \xi_2 < \eta_2 <  \dots < \eta_m < \xi_{m+1}<1 \qquad& (n=2m+1).
\end{split}
\end{equation}
 \end{cor}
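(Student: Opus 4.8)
The plan is to deduce this directly from Lemma~\ref{Signs} by repackaging the $n-m$ numbers $\xi_i$ and the $m$ numbers $\eta_j$ into a single list with signs. First I would form the multiset $\{\xi_1,\dots,\xi_{n-m},\eta_1,\dots,\eta_m\}$, which has exactly $(n-m)+m=n$ elements, and list them in nondecreasing order as $x_1\le x_2\le\dots\le x_n$ (breaking ties arbitrarily), recording for each $x_i$ a sign $\delta_i=+1$ if that entry came from a $\xi$ and $\delta_i=-1$ if it came from an $\eta$. With this relabelling, the hypothesis \eqref{thetasumm} is literally the system \eqref{altdelta}, namely $\sum_{i=1}^n \delta_i x_i^k=\theta$ for $1\le k\le n$, with $0\le x_1\le\dots\le x_n\le 1$.

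Lemma~\ref{Signs} then applies verbatim and yields two conclusions: the inequalities are strict, $0<x_1<x_2<\dots<x_n<1$, and the signs alternate, $\delta_i=(-1)^{n-i}$, so in particular $\delta_n=+1$ and consecutive signs differ. The strictness already rules out repetitions among the $\xi$'s or among the $\eta$'s, rules out any coincidence $\xi_i=\eta_j$, and rules out $0$ occurring among the numbers — which is part of what \eqref{interleave} asserts.

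It then remains to read off the interleaving pattern \eqref{interleave} from the alternating signs, a short check on the parity of $n$. Since $\delta_n=+1$, the largest number $x_n$ is a $\xi$; moving down the list the labels alternate $\xi,\eta,\xi,\eta,\dots$. If $n=2m$ then $x_1$ carries sign $(-1)^{n-1}=-1$, so from the bottom the labels are $\eta,\xi,\eta,\xi,\dots,\eta,\xi$, giving $m$ of each (consistent with there being $n-m=m$ of the $\xi$'s); writing the two families in increasing order yields $0<\eta_1<\xi_1<\eta_2<\xi_2<\dots<\eta_m<\xi_m<1$. If $n=2m+1$ then $x_1$ carries sign $(-1)^{n-1}=+1$, the labels from the bottom are $\xi,\eta,\xi,\eta,\dots,\eta,\xi$, giving $m+1$ of the $\xi$'s and $m$ of the $\eta$'s (consistent with $n-m=m+1$), and in increasing order $0<\xi_1<\eta_1<\xi_2<\eta_2<\dots<\eta_m<\xi_{m+1}<1$. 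Counting the $+1$'s among $\delta_1,\dots,\delta_n$ in either case gives $n-m=\lceil n/2\rceil$, i.e.\ $m=\lfloor n/2\rfloor$.

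Since the corollary is essentially just a restatement of Lemma~\ref{Signs}, I do not expect a serious obstacle. The only points requiring care are the bookkeeping of the sign assignment (which is harmless because Lemma~\ref{Signs} outputs strict inequalities, so the arbitrary tie-breaking is retroactively seen to be vacuous) and getting the two parity cases of the interleaving count right.
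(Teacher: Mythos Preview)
Your proposal is correct and follows exactly the route the paper intends: the paper offers no separate proof but simply introduces the corollary with ``This is equivalent to:'', and you have spelled out precisely that equivalence by relabelling the $\xi$'s and $\eta$'s as a single ordered list with signs and invoking Lemma~\ref{Signs}. Your parity bookkeeping and the observation that the strict inequalities output by Lemma~\ref{Signs} render the tie-breaking vacuous are both accurate.
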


We will also need a generalization of these results  later for the proof of Theorem~\ref{Jacobi}:
\begin{cor}
\label{Sequence}
The same result applies if we replace the condition $1 \le k \le n$ in \eqref{altdelta} and \eqref{thetasumm} by the condition that $k-1\in\mathcal I$ for some $\mathcal I\subset\N$ of size $\Abs{\mathcal I}=n$.
\end{cor}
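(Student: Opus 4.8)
The plan is to go back through the proof of Lemma~\ref{Signs}, isolate exactly where the hypothesis ``$1\le k\le n$'' is used, and check that the replacement hypothesis supplies the same two ingredients. Write $\mathcal I=\{m_1,\dots,m_n\}\subset\N$ with $m_1<\dots<m_n$, and let $V$ be the $n$-dimensional space of polynomials spanned by $\{x^{m}:m\in\mathcal I\}$. The first ingredient is the reformulation of the equations: exactly as in the original proof, the system \eqref{altdelta} restricted to exponents $k$ with $k-1\in\mathcal I$ is equivalent to
\[
\sum_{i=1}^n \delta_i x_i D(x_i)=\theta\,D(1)\qquad\text{for all }D\in V .
\]
Indeed, for $D(x)=\sum_{m\in\mathcal I}c_m x^m$ one has $xD(x)=\sum_{m\in\mathcal I}c_m x^{m+1}$, so $\sum_i\delta_ix_iD(x_i)=\sum_{m\in\mathcal I}c_m\sum_i\delta_ix_i^{\,m+1}=\theta\sum_{m\in\mathcal I}c_m=\theta D(1)$. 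The same computation handles \eqref{thetasumm}, and Corollary~\ref{Plusminus} deduces from (the generalized) Lemma~\ref{Signs} exactly as before, so it suffices to treat \eqref{altdelta}.

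The second ingredient replaces the trivial construction of polynomials with prescribed simple roots used in Cases~1 and~2, which is no longer available since such products need not lie in $V$. The relevant fact is that $\{x^{m_1},\dots,x^{m_n}\}$ is a Chebyshev (Haar) system on $(0,\infty)$: every nonzero element of $V$ has at most $n-1$ zeros there. This is the generalized Descartes rule of signs, provable by induction on $n$ via Rolle's theorem after dividing by $x^{m_1}$ and differentiating (equivalently, the Wronskians of $x^{m_1},\dots,x^{m_k}$ are nonvanishing monomials on $(0,\infty)$). Consequently, given any $r\le n-1$ distinct points of $(0,\infty)$, one appends $n-1-r$ further points in $(1,\infty)$ and solves the $n-1$ vanishing conditions to obtain $D\in V$, unique up to a scalar, whose zeros in $(0,\infty)$ are exactly those $n-1$ points, all simple; in particular $D$ changes sign at each prescribed point, does not vanish elsewhere on $(0,1]$, and we may fix its overall sign at will.

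With these in hand, the argument is a transcription of the proof of Lemma~\ref{Signs}, reading ``element of $V$'' for ``polynomial of degree $\le n-1$'' and ``number of zeros in $(0,\infty)$'' for ``degree''. The reduction of the index set $T$ is unchanged. In Case~1 one builds $D\in V$ with simple zeros at the (at most $|T|-1\le n-1$) places where $\delta$ changes along consecutive elements of $T$, padded to $n-1$ zeros as above; since $D\in V$, the displayed identity forces the quantity $-\theta D(1)+\sum_{i=1}^n\delta_i x_iD(x_i)$, which the sign choices make strictly positive, to vanish --- so Case~1 cannot occur. In Case~2 one works with $T\cup\{n+1\}$: if the number of places where $\delta$ changes along its consecutive elements is $\le n-1$, the same construction gives $D\in V$ and the same contradiction; hence this number equals $n$, which (as $T\cup\{n+1\}$ has at most $n+1$ elements) forces $T=\{1,\dots,n\}$ with $\delta$ changing across every consecutive pair, i.e.\ $0<x_1<x_2<\dots<x_n<1$ and $\delta_i=(-1)^{n-i}$, as claimed.

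I expect the only genuine work to be the second ingredient --- establishing that $\{x^m:m\in\mathcal I\}$ is a Haar system on $(0,\infty)$ and extracting the interpolation/sign-change statement --- but this is classical and the proof is short; the rest is a routine rewriting of the proof of Lemma~\ref{Signs}.
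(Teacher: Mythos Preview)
Your argument is correct and follows the same outline as the paper: reformulate the hypotheses as $\sum_i\delta_ix_iD(x_i)=\theta D(1)$ for all $D$ with exponents in $\mathcal I$, then show that whenever the number of required sign--change points is at most $n-1$ one can manufacture such a $D$ with exactly those simple positive zeros, reproducing the contradiction from Lemma~\ref{Signs}. The only difference is tactical: the paper takes the first $m+1$ exponents of $\mathcal I$, solves for the $m$ free coefficients, and invokes Descartes' rule of signs to exclude further positive roots; you instead keep all $n$ exponents, pad the prescribed roots out to $n-1$ by adding points in $(1,\infty)$, and appeal to the Haar property of $\{x^m:m\in\mathcal I\}$ on $(0,\infty)$ (equivalently the generalized Descartes/Rolle argument) both to solve the interpolation and to rule out extra zeros. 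Either device yields the same $D$-with-controlled-signs, so the two proofs are essentially the same.
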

\begin{proof}
All we need to generalize the proof is to show that for any $m\leq n-1$ positive real numbers, there exists a polynomial $D$, of degree $m$, whose exponents are all in $\mathcal I$, and whose positive roots are simple and are precisely the given numbers.

To construct such a polynomial, consider an arbitrary monic polynomial that uses the first $m+1$ numbers in $\mathcal I$ as its exponents. The condition on the roots gives a system of linear equations for the $m$ undetermined coefficients. This has a unique solution, because of the linear independence of different powers of $x$. Descartes' rule of signs shows that the number of positive roots of $D$ is at most the number of sign changes in the coefficients of $D$, which is at most $m$; therefore there can be no other positive roots.
\end{proof}

In view of Corollary~\ref{Plusminus},  the problem of finding solutions to eqs.~\eqref{thetasumlow} that satisfy the condition $0 \le x_1 \le x_2 \le \dots \le x_n \le 1$ is immediately solved once one finds a solution to the equations \eqref{thetasumm} for which all the $\xi_i$, $i=1 \dots n-m$ and the $\eta_j, j=1\dots m$ lie in $[0,1]$.  

\begin{lem}
\label{Roots}
Let $P^+$ and $P^-$ be monic polynomials of degrees $n-m$ and $m$.
The roots of $P^+$ and $P^-$ will be a solution to eqs.~\eqref{thetasumm} if and only if
\begin{equation}
\label{PplusoverPminus}
\frac{{P^+}^*(x)}{{P^-}^*(x)}=(1-x)^\theta + O(x^{n+1}).
\end{equation}
\end{lem}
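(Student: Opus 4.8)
The plan is to mimic the preliminary computation for \eqref{thetasumplus} but now keeping track of both the numerator and denominator reciprocal polynomials. First I would write $P^+(x)=\prod_{i=1}^{n-m}(x-\xi_i)$ and $P^-(x)=\prod_{j=1}^m(x-\eta_j)$, so that ${P^+}^*(x)=\prod_i(1-\xi_i x)$ and ${P^-}^*(x)=\prod_j(1-\eta_j x)$ (using the definition of the reciprocal polynomial and the fact that $P^\pm$ are monic of the stated degrees). Taking logarithms and expanding via the Taylor series of $\log(1-u)$, as in the derivation of \eqref{plus}, gives
\[
\log\frac{{P^+}^*(x)}{{P^-}^*(x)} = -\sum_{k\geq1}\frac{1}{k}\Bigl(\sum_i \xi_i^k - \sum_j \eta_j^k\Bigr)x^k .
\]
The bracketed quantity is exactly the left-hand side of \eqref{thetasumm}.

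Then I would argue the equivalence in both directions. If the roots solve \eqref{thetasumm}, then the coefficient of $x^k$ in the log-series equals $-\theta/k$ for $1\le k\le n$, so the power series of $\log\bigl({P^+}^*/{P^-}^*\bigr)$ agrees with that of $\theta\log(1-x)$ up to and including order $n$; exponentiating, ${P^+}^*(x)/{P^-}^*(x) = (1-x)^\theta + O(x^{n+1})$, which is \eqref{PplusoverPminus}. Conversely, if \eqref{PplusoverPminus} holds, then since ${P^-}^*(0)=1\neq0$ the ratio is a well-defined power series near $0$, and matching it with $(1-x)^\theta$ up to order $n$ forces, after taking logs, the coefficient of $x^k$ to be $-\theta/k$ for each $1\le k\le n$, which is precisely \eqref{thetasumm}. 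I should note that ${P^-}^*(0)=1$ because $P^-$ is monic, so ${P^-}^*$ has constant term $1$ and the quotient makes sense as a formal power series; this is the one small point that needs care.

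The main obstacle, such as it is, is bookkeeping rather than depth: I must be careful that the Taylor-series manipulation is legitimate as an identity of formal power series (or convergent series for $|x|$ small, since all $\xi_i,\eta_j$ are bounded), and that "agreement up to $O(x^{n+1})$" is preserved and reflected correctly under $\exp$ and $\log$ — i.e.\ two power series with constant term $1$ agree mod $x^{n+1}$ iff their logarithms do. Nothing here requires the interleaving conclusions of Corollary~\ref{Plusminus}; the lemma is purely the translation of the moment equations \eqref{thetasumm} into the single functional equation \eqref{PplusoverPminus}, and the proof is essentially the same three-line computation as in the all-positive-signs warm-up, carried out for a ratio instead of a single product.
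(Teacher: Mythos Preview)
Your proposal is correct and follows essentially the same approach as the paper: factor $P^\pm$, pass to the reciprocal polynomials, take logarithms to convert the product/quotient into the power-sum generating series, and compare with $\theta\log(1-x)$ term by term up to order $n$. If anything, you are slightly more explicit than the paper about why the quotient is a legitimate power series (${P^-}^*(0)=1$) and why agreement modulo $x^{n+1}$ is preserved under $\exp$ and $\log$.
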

(We remark that ${P^+}^*(x)$ and ${P^-}^*(x)$ are what are called Pad\'e approximants \cite{Pade} to $(1-x)^\theta$.)
\begin{proof}
Write the factorizations of these polynomials as
\[
P^+(x)=\prod_{i=1}^{n-m} (x-\xi_i) \qquad\text{and}\qquad P^-(x) = \prod_{j=1}^m (x-\eta_j).
\]
Generalizing our preliminary example, this means that
\begin{align*}
\frac{{P^+}^*(x)}{{P^-}^*(x)} &= \frac{\prod_{i=1}^{n-m} (1-\xi_ix)}{\prod_{j=1}^m (1-\eta_jx)}
= \exp\left\{\sum_{i=1}^{n-m} \log (1-\xi_ix) - \sum_{j=1}^m \log(1-\eta_jx)\right\} \\
&= \exp\left\{-\sum_{k=1}^n\frac1k s_k x^k\right\} + O(x^{n+1}) ,
\end{align*}
where
\[
s_k \eqdef \xi_1^k+\xi_2^k+ \dots + \xi_{n-m}^k - (\eta_1^k + \eta_2^k + \dots + \eta_m^k) .
\]
On the other hand,
\[
 (1-x)^\theta =  \exp\left\{-\sum_{k=1}^n\frac1k \theta x^k\right\} + O(x^{n+1}) ,
\]
so $s_k=\theta$ for $k=1,\dots,n$ (which is eqs.~\eqref{thetasumm}) if and only if eq.~\eqref{PplusoverPminus} holds.
\end{proof}

\begin{definition}
\begin{equation}
\label{P}
P_m(x,\theta)= \sum_{i=0}^m (-1)^i \frac{m! (2m-i)!}{(2m)! (m-i)!} \binom{m+\theta}{i} x^{m-i}
\end{equation}
\begin{equation}
\label{Q}
Q_{m+1}(x,\theta)= \sum_{i=0}^{m+1} (-1)^i \frac{(m+1)!(2m+1-i)!}{(2m+1)!(m+1-i)!}\binom{m+\theta}{i} x^{m+1-i}
\end{equation}
\begin{equation}
\label{R}
R_m(x,\theta)= \sum_{i=0}^{m} (-1)^i \frac{m!(2m+1-i)!}{(2m+1)!(m-i)!}\binom{m+1+\theta}{i} x^{m-i} .
\end{equation}
\end{definition}

\begin{thm}
\label{Explicit}
For $\lambda\in(-1,1)$ and $n\in\N$, 
\[
\sigma=\Sigma_n(\mathbf x)
\]
satisfies  
\[
0 = \langle p,\sigma-\lambda\rangle
\]
for any polynomial of degree $\leq n-1$ if and only if the $x_i$ are the roots (in order) of the polynomials of $x$:
\begin{equation*}
P_m(x,\theta)\text{ and }P_m(x,-\theta), \text{ for }n=2m\text{ and }\theta=\tfrac{1-\lambda}2,
\end{equation*}
\begin{equation*}
Q_{m+1}(x,\theta)\text{ and }R_m(x,-\theta),\text{ for } n=2m+1\text{ and }\theta=\tfrac{1+\lambda}2.
\end{equation*}
In each case $x_1, x_3,\dots$ are the roots of the first polynomial.
\end{thm}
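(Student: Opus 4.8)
The plan is to combine Corollary~\ref{Plusminus} (which reduces the alternating system \eqref{thetasumlow} to the mixed-sign system \eqref{thetasumm}) with Lemma~\ref{Roots} (which converts \eqref{thetasumm} into the Padé condition \eqref{PplusoverPminus}), and then to verify by direct computation that the explicitly-defined polynomials $P_m$, $Q_{m+1}$, $R_m$ are exactly the numerator/reciprocal and denominator/reciprocal of the relevant Padé approximant to $(1-x)^\theta$. Concretely: setting $\sigma = \Sigma_n(\mathbf x)$, the condition $0 = \langle p, \sigma - \lambda\rangle$ for all $p$ of degree $\leq n-1$ is equivalent to \eqref{thetasumlow} with $\theta = (1+(-1)^{n-1}\lambda)/2$; this is the $\theta = \tfrac{1-\lambda}{2}$ case for $n=2m$ and the $\theta = \tfrac{1+\lambda}{2}$ case for $n = 2m+1$, matching the two cases in the statement. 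By Corollary~\ref{Plusminus}, any solution in $\Delta^n$ has the $x_i$ split into $m = \lfloor n/2\rfloor$ values playing the role of the $\eta_j$ (negative signs) and $n-m$ values playing the role of the $\xi_i$ (positive signs), automatically interleaved as in \eqref{interleave}; and conversely any solution of \eqref{thetasumm} with all roots in $[0,1]$ yields, via the interleaving, a valid point of $\Delta^n$. So it suffices to show: for the even case, ${P^+}^* = P_m^*(\,\cdot\,,\theta)$ up to normalization and ${P^-}^* = P_m^*(\,\cdot\,,-\theta)$; and for the odd case, ${P^+}^* = Q_{m+1}^*(\,\cdot\,,\theta)$, ${P^-}^* = R_m^*(\,\cdot\,,-\theta)$.

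For existence and uniqueness I would argue as follows. Uniqueness: Lemma~\ref{Roots} shows that a solution of \eqref{thetasumm} forces \eqref{PplusoverPminus}, i.e. ${P^+}^*(x) = (1-x)^\theta {P^-}^*(x) + O(x^{n+1})$ with $\deg P^+ = n-m$, $\deg P^- = m$; the matching of $n+1$ Taylor coefficients ($\deg {P^+}^* + \deg {P^-}^* + 1 = n+1$) determines the pair $({P^+}^*, {P^-}^*)$ uniquely up to a common scalar, hence $(P^+, P^-)$ up to scalar, hence (taking them monic) uniquely — this is the standard uniqueness of Padé approximants. Existence: one must exhibit \emph{some} solution of \eqref{thetasumm} with all roots in $[0,1]$. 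Here I would simply verify that the stated polynomials $P_m, Q_{m+1}, R_m$ do the job: check by a generating-function or hypergeometric computation that $\frac{P_m^*(x,\theta)}{P_m^*(x,-\theta)} = (1-x)^\theta + O(x^{2m+1})$ and likewise $\frac{Q_{m+1}^*(x,\theta)}{R_m^*(x,-\theta)} = (1-x)^\theta + O(x^{2m+2})$; these are the classical Padé approximants to $(1-x)^\theta$, with the coefficients in \eqref{P}–\eqref{R} being exactly the known closed forms. Once the Padé identity is confirmed, Lemma~\ref{Roots} gives that the roots solve \eqref{thetasumm} for \emph{some} value, and since the coefficient of $x^k$ ($k\le n$) in the expansion is $\theta$ by construction, it is the right value; then Corollary~\ref{Plusminus} (applied in the direction: a solution of \eqref{thetasumm} with all $\xi_i,\eta_j\in[0,1]$ produces a valid $\mathbf x\in\Delta^n$ with $x_i<x_{i+1}$) guarantees the roots genuinely lie in $[0,1]$ and interleave, so $\Sigma_n(\mathbf x)$ is well-defined and solves \eqref{thetasumlow}. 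Finally, the bookkeeping that $x_1, x_3, \dots$ are the roots of the first polynomial follows from the interleaving pattern \eqref{interleave}: in the even case the smallest root is an $\eta$ (the $-\theta$ polynomial $P_m(\,\cdot\,,-\theta)$ would then hold the even-indexed roots) — so I would double-check the parity convention against \eqref{interleave} to get the ordering statement right, possibly noting that which polynomial is "first" is dictated by whether $x_1$ carries sign $\delta_1 = (-1)^{n-1}$.

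The one genuinely non-routine ingredient — and the step I expect to be the main obstacle — is verifying the Padé identities for the explicit coefficient formulas \eqref{P}–\eqref{R}, i.e. that $P_m^*(x,\theta) = {}_2F_1$-type polynomial with $P_m^*(x,\theta) - (1-x)^\theta P_m^*(x,-\theta) = O(x^{2m+1})$, and the analogous $(Q_{m+1}, R_m)$ identity at order $2m+2$. This is a known computation (the diagonal and near-diagonal Padé approximants of $(1-x)^\theta$ have hypergeometric closed forms), but reproducing it cleanly requires either a contiguous-relations argument for ${}_2F_1$, a generating-function manipulation, or an induction on $m$; I would present it via the Euler integral representation of ${}_2F_1$ or by directly checking that the relevant Taylor coefficients of the ratio vanish using a Vandermonde-type binomial summation. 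Everything else — the reduction through Corollaries and Lemmas already proved, the uniqueness from dimension-counting of Padé data, and the interleaving/ordering bookkeeping — is essentially formal given the earlier results.
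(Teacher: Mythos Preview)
Your overall architecture matches the paper's: reduce via Corollary~\ref{Plusminus} and Lemma~\ref{Roots} to the Pad\'e condition \eqref{PplusoverPminus}, then verify that the explicit polynomials \eqref{P}--\eqref{R} satisfy it. The paper carries out the Pad\'e verification not by binomial summation or the Euler integral, but by recognizing $P_m^*(x,\theta)={}_2F_1(-m,-m-\theta;-2m;x)$ and showing via the Gauss differential equation that $(1-x)^{-\theta}P_m^*(x,\theta)$ agrees with $P_m^*(x,-\theta)$ to order $x^{2m+1}$; your proposed routes are viable alternatives.

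There is, however, a genuine gap. You write that once the Pad\'e identity is confirmed, ``Corollary~\ref{Plusminus} \dots\ guarantees the roots genuinely lie in $[0,1]$ and interleave.'' This is circular: the hypothesis of Corollary~\ref{Plusminus} is precisely that the $\xi_i,\eta_j$ already lie in $[0,1]$; its conclusion is only the interleaving and the value of $m$. Nothing in Lemma~\ref{Roots} or the Pad\'e identity itself constrains the location of the roots of $P^\pm$ --- a priori they could be complex, or real but outside $(0,1)$, in which case Corollary~\ref{Plusminus} does not apply and you do not get a point of $\Delta^n$ at all. The paper treats this as a separate, nontrivial step: it identifies $P_m^*(x,\pm\theta)$ as hypergeometric polynomials and invokes the zero-location results of Driver--Jordaan and Driver--M\"oller (their Theorem~2.3, cases (ii) and (iii)) to conclude that the roots of $P_m^*(x,\pm\theta)$ lie in $(1,\infty)$, hence those of $P_m(x,\pm\theta)$ lie in $(0,1)$. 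You need an analogous argument (or a citation) for each of $P_m(\cdot,\theta)$, $P_m(\cdot,-\theta)$, $Q_{m+1}(\cdot,\theta)$, $R_m(\cdot,-\theta)$; without it, existence is not established.

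A smaller point: your uniqueness sketch (``standard uniqueness of Pad\'e approximants by dimension counting'') is morally right but glosses over the possibility of degeneracy in the Pad\'e table. The paper handles this carefully in the separate Theorem~\ref{Uniqueness} by writing the Pad\'e matching as a linear system and proving the relevant determinant $\det K$ is nonzero for $\theta\in(0,1)$. If you want to absorb uniqueness into the present proof, you should at least note that Corollary~\ref{Plusminus}, applied to any solution of \eqref{thetasumlow} in $\Delta^n$, forces the $\xi$'s and $\eta$'s to be distinct and interleaved, so $P^+$ and $P^-$ are coprime and the usual cross-multiplication argument for Pad\'e uniqueness goes through.
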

\begin{proof}
Consider the case $n=2m$. By Lemma~\ref{Roots} and Corollary~\ref{Plusminus}, it is sufficient to show that $P^\pm(x)=P_m(x,\pm\theta)$ satisfy eq.~\eqref{PplusoverPminus} and have all of their roots between $0$ and $1$.

From eq.~\eqref{P} we see that
\begin{equation}
\label{hyperpoly}
P_m^*(x,\theta) = 1 - \frac{m}{2m}\cdot\frac{m+\theta}{1!}+\frac{m(m-1)}{2m(2m-1)}\binom{m+\theta}{2}x^2-\dots + (-1)^m\binom{2m}{m}^{-1}\binom{m+\theta}{m}x^m,
\end{equation}
which is a hypergeometric function, $P_m^*(x,\theta) = {}_2F_1(-m,-m-\theta;-2m;x)$. In this case, the power series terminates at the $m$'th term, so that albeit $c=-2m$ is a negative integer, the denominators do not vanish.  

The roots of the polynomial \eqref{hyperpoly} all lie in $(1,\infty)$; the roots of the reciprocal polynomial, $P_m(x,\theta)$, are the reciprocals of these,  and hence, as required,  lie in $(0,1)$.  We refer to \cite[Theorem 2.3 (iii)]{Zeros1} in which the parameter $k$ is seen to be $0$.   To cope with $P_m^*(x,-\theta)$, we employ \cite[Theorem 2.3 (ii)]{Zeros1} (see also \cite{Zeros2}) when the parameter again denoted by $k$ equals $m$ to see that all roots lie in $(0,1)$.

In general, the hypergeometric function
\begin{equation}
\label{hyper}
{}_2F_1(a,b;c;x)=1+\frac{ab}{1!c}x+\frac{a(a+1)b(b+1)}{2!c(c+1)}x^2 + \dots 
\end{equation}
satisfies the Gauss differential equation
\begin{equation*}
x(1-x)F''+\{c-(1+a+b)x\}F'-abF=0.
\end{equation*}
The function $W(x)=(1-x)^{-\theta}P_m^*(x,\theta)$ satisfies the differential equation
\begin{equation}
\label{Wode}
x(1-x)W''+\{-2m-(1-2m+\theta)x\}W'-m(m-\theta)W=0
\end{equation}
which is a Gauss equation with parameters $-m+\theta, -m, -2m$.    The indicial equation for \eqref{Wode}  has roots $0,1+2m$.  Since $P_m^*(x,\theta)$ is a polynomial and $0<\theta < 1$, $W$ does not involve logarithms and has an infinite, convergent power series expansion comprising two parts: $P_m^*(x,-\theta)$ and then a series whose first term involves $x^{2m+1}$.  We refer to 
\cite[p.~286]{WW} to find that, with a suitable constant $c_m(\theta)$, we have
\begin{equation*}
(1-x)^{-\theta}P_m^*(x,\theta)=P_m^*(x,-\theta) + c_m(\theta)x^{2m+1}{}_2F_1(m+1,m+1+\theta,2m+2;x),
\end{equation*}
which satisfies \eqref{PplusoverPminus} as we wished to show.  

We do not need the value of $c_m(\theta)$ but remark that this may be deduced from \cite[p.\ 299, Ex.~18.]{WW}.  For suitable values of the parameters, this gives an asymptotic formula for the hypergeometric function as $x\rightarrow 1$.

The odd $n$ case is similar and is left to the reader.
\end{proof}
Another way of expressing this is that 
\[
\sigma(x) = \sgn\left\{P_m(x,\theta)P(x,-\theta)\right\}
\]
and so on. Although this formula gives the value $0$ at some points, it still defines a switch function within $L^\infty[0,1]$.

For example, let $n=5, \theta=1/3$.  From  \eqref{Q} and \eqref{R},
\begin{equation*}
Q_3(x,1/3)=x^3-\frac{7}{5}x^2+\frac{7}{15}x-\frac{7}{405}, \quad R_2(x,-1/3)=x^2-\frac{16}{15}x+\frac{2}{9}.
\end{equation*}
Therefore, $x_1 \doteq .0422244245$, $x_2 \doteq .2838895075$, $x_3 \doteq .4518343712$, $x_4 \doteq .7827771591$,
$x_5 \doteq .9050412043$.

\subsection{Uniqueness}
We now answer the uniqueness part of Questions~\ref{Main Question} and \ref{Generalized Question} in this polynomial case.
 
\begin{thm}
\label{Uniqueness}
For $\lambda\in(-1,1)$ and $n\in\N$, the switch function $\sigma$ described in Theorem~\ref{Explicit} is the \emph{unique} $\sigma\in D_n$ such that $0 = \langle p,\sigma-\lambda\rangle$ for any polynomial $p$ of degree $\leq n-1$.
\end{thm}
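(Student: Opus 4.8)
\emph{Proof proposal.} The plan is to reduce the uniqueness of $\sigma$ to the uniqueness of a Pad\'e approximant. Any $\sigma\in D_n$ equals $\Sigma_n(\mathbf x)$ for some $\mathbf x\in\Delta^n$, and, as recorded above, the condition $0=\langle p,\sigma-\lambda\rangle$ for all $p$ of degree $\leq n-1$ is then equivalent to the system \eqref{thetasumlow}, i.e.\ $\sum_{i=1}^n(-1)^{n-i}x_i^k=\theta$ for $k=1,\dots,n$, with $\theta=(1+(-1)^{n-1}\lambda)/2\in(0,1)$ since $\lambda\in(-1,1)$. Applying Lemma~\ref{Signs} with $\delta_i=(-1)^{n-i}$ shows that any $\mathbf x\in\Delta^n$ satisfying this system in fact satisfies $0<x_1<x_2<\dots<x_n<1$; in particular such an $\mathbf x$ lies in the interior of $\Delta^n$, where $\Sigma_n$ is injective by Lemma~\ref{Good subset}. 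Hence $\sigma$ determines $\mathbf x$, and it suffices to show that the solution $\mathbf x$ of \eqref{thetasumlow} in the interior of $\Delta^n$ is unique.

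Next I would split the roots by parity. Put $m=\lfloor n/2\rfloor$ and let
\[
A(x)=\prod_{i:\,(-1)^{n-i}=1}(x-x_i),\qquad B(x)=\prod_{i:\,(-1)^{n-i}=-1}(x-x_i),
\]
which (checking the cases $n=2m$ and $n=2m+1$ separately) are monic of degrees $n-m$ and $m$ respectively. Because the $x_i$ are distinct and lie in $(0,1)$, the polynomials $A$ and $B$ are coprime and satisfy $A(0),B(0)\neq0$; consequently $A^*$ and $B^*$ have degrees exactly $n-m$ and $m$, are coprime, and have constant term $1$. The roots of $A$ and $B$ are precisely a solution of \eqref{thetasumm}, so by Lemma~\ref{Roots},
\[
\frac{A^*(x)}{B^*(x)}=(1-x)^\theta+O(x^{n+1}),
\]
i.e.\ $(A^*,B^*)$ is the $[\,n-m\,/\,m\,]$ Pad\'e approximant to $(1-x)^\theta$.

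Finally I would invoke uniqueness of Pad\'e approximants. Suppose $(A_1,B_1)$ and $(A_2,B_2)$ are two pairs arising in this way. Then $A_1^*B_2^*-A_2^*B_1^*$ is a polynomial of degree $\leq n$ which, using $A_j^*=(1-x)^\theta B_j^*+O(x^{n+1})$ for $j=1,2$, is $O(x^{n+1})$; hence it vanishes identically, so $A_1^*/B_1^*=A_2^*/B_2^*$ as rational functions. Coprimality of $A_j^*,B_j^*$ together with the normalization $A_j^*(0)=B_j^*(0)=1$ forces $A_1^*=A_2^*$ and $B_1^*=B_2^*$, hence $A_1=A_2$ and $B_1=B_2$ (using $(A^*)^*=A$, valid since $A(0)\neq0$). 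Therefore the root set $\{x_1,\dots,x_n\}$ is uniquely determined, and since $x_1<\dots<x_n$ the vector $\mathbf x$ — and with it $\sigma=\Sigma_n(\mathbf x)$ — is unique. (Combined with Theorem~\ref{Explicit}, this also shows $\mathbf x$ must consist of the listed roots of $P_m(x,\pm\theta)$ for $n=2m$, resp.\ of $Q_{m+1}(x,\theta)$ and $R_m(x,-\theta)$ for $n=2m+1$, so the theorem follows.)

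The step requiring the most care is the opening reduction: one must be sure that an arbitrary member of $D_n$ — which a priori could have fewer than $n$ switches, or switches at $0$ or $1$ — is forced by Lemma~\ref{Signs} to be $\Sigma_n$ of a \emph{strictly interior} $\mathbf x$, so that the parity split yields polynomials of exactly the degrees $n-m$ and $m$ demanded by Lemma~\ref{Roots} and so that $\Sigma_n$ is injective there. The Pad\'e-uniqueness step itself is the short cross-multiplication argument given above, or may simply be cited via \cite{Pade}.
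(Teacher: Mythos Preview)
Your proof is correct and takes a genuinely different route from the paper's. Both proofs reduce (via Lemma~\ref{Roots}) to showing uniqueness of the pair $(P^{+*},P^{-*})$ satisfying \eqref{PplusoverPminus}, but they establish that uniqueness differently. The paper treats \eqref{PplusoverPminus} as a linear system for the coefficients of $P^{\pm*}$, obtains the block matrix equation \eqref{block}, and devotes Propositions~\ref{DetKeven} and~\ref{DetKodd} of the appendix to computing $\det K$ explicitly and checking it is nonzero for $\theta\in(0,1)$. You instead use the classical cross-multiplication argument for Pad\'e approximants: $A_1^*B_2^*-A_2^*B_1^*$ has degree $\leq n$ but is $O(x^{n+1})$, hence vanishes; then coprimality and normalization pin down the pair. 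The crucial ingredient that makes your shorter route work is that the coprimality of $A^*$ and $B^*$ is already supplied by Lemma~\ref{Signs}, which forces the $x_i$ to be distinct and lie in $(0,1)$. The paper's approach, by contrast, yields the slightly stronger statement that the degree-$(n-m,m)$ Pad\'e approximant to $(1-x)^\theta$ is unique without any a priori coprimality assumption --- at the cost of the determinant computation in the appendix.
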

\begin{proof}
 We need to show that for $\theta\in(0,1)$, the solution of eqs.~\eqref{thetasumlow} with $\mathbf x\in\Delta^n$ is unique. Any solution determines polynomials $P^\pm$ satisfying eq.~\eqref{PplusoverPminus}, and the polynomials determine an ordered solution uniquely. It is therefore sufficient to show uniqueness for the solution of eq.~\eqref{PplusoverPminus}.

To prove the uniquess of $P^\pm$, write
 \begin{gather*}
{P^+}^*(x) = 1 - a_1x + a_2 x^2 - \dots + (-1)^{n-m} a_{n-m}x^{n-m}, \\
 {P^-}^*(x) = 1 -  b_1x + b_2 x^2 - \dots  + (-1)^m b_mx^m .
\end{gather*}
Cross multiplying in \eqref{PplusoverPminus} and equating coefficients of powers of $-x$ up to $(-x)^n$, we easily find that \eqref{PplusoverPminus} is equivalent to the block matrix equation 
\begin{equation}
\label{block}
\begin{pmatrix} I & -T \\ 0 & -K \end{pmatrix}\begin{pmatrix} A \\ B \end{pmatrix} = C,
\end{equation}
where $I$ is the $(n-m)\times (n-m)$ identity matrix, $0$ the $m\times (n-m)$ zero matrix, $A$, $B$, and $C$ are the $(n-m)\times 1$, $m\times 1$, and $n\times 1$ matrices
\begin{equation*}
A=\begin{pmatrix} a_1\\a_2\\ \vdots \\a_{n-m}\end{pmatrix}, \quad B=\begin{pmatrix} b_1\\b_2\\ \vdots \\ b_m\end{pmatrix}; \quad C=\begin{pmatrix} c_1\\ c_2 \\ \vdots   \\ c_n\end{pmatrix},
\end{equation*}
where 
\begin{equation*}
c_i=\binom{\theta}{i}
\end{equation*}
is the coefficient of $(-x)^i$ in the binomial expansion, \eqref{plus} of $(1-x)^\theta$;
$T$ is the $(n-m)\times m$ matrix, given, when $n=2m$, by
\begin{equation*}
T = \begin{pmatrix} 1 & 0 & 0 & \dots & 0 & 0 \\ 
c_1 & 1 & 0 & \dots &  0 & 0 \\c_2 & c_1 & 1 & \dots &  0 & 0\\ 
\vdots & \vdots & \vdots& \ddots  & \vdots & \vdots \\ 
c_{m-2} & c_{m-3} & c_{m-4} &\dots  & 1 & 0\\
c_{m-1} & c_{m-2} & c_{m-3} & \dots & c_1 & 1
\end{pmatrix}, 
\end{equation*}
and, when $n=2m+1$, by
\begin{equation*}
T =  \begin{pmatrix} 
 1 & 0 & 0 & \dots & 0 & 0 \\ 
c_1 & 1 & 0 & \dots &  0 & 0 \\c_2 & c_1 & 1 & \dots &  0 & 0\\ 
\vdots & \vdots & \vdots& \ddots  & \vdots & \vdots \\ 
c_{m-2} & c_{m-3} & c_{m-4} &\dots  & 1 & 0\\
c_{m-1} & c_{m-2} & c_{m-3} & \dots & c_1 & 1\\
c_m & c_{m-1} & c_{m-2} &\dots & c_2 & c_1 \end{pmatrix},
\end{equation*}
while (both when $n$ is even and when $n$ is odd) $K$ is the $m \times m$ matrix,
\begin{equation}
\label{K}
K = \begin{pmatrix} c_{n-m} & c_{n-m-1} & \dots & c_{n-2m+1} \\c_{n-m+1} & c_{n-m} & \dots & 
c_{n -2m+2} \\ \vdots & \vdots & \ddots & \vdots \\ c_{n-1} & c_{n-2} & \dots  & c_{n-m}\end{pmatrix}.
\end{equation}

Clearly, the determinant of the $2\times 2$ block matrix in \eqref{block} is $(-1)^m \det K$.   We show in the appendix (Propositions \ref{DetKeven} and \ref{DetKodd}) that this is never zero for $\theta\in(0,1)$.     So the matrix equation \eqref{block} has a unique solution and therefore the polynomials, $P^+$ and $P^-$ are determined uniquely by eq.~\eqref{PplusoverPminus}.  
\end{proof}

The reader may wonder why we proved existence by verifying that the polynomials given in the statement of Theorem~\ref{Explicit} satisfy $\eqref{thetasumlow}$ rather than by deriving these polynomials from those equations, and they may further wonder how we came to know that these were the right polynomials to try.   To address these questions, we remark that in principle it must, of course, be possible to solve \eqref{block} and thereby derive the explicit formulae \eqref{P} for $P_m(x, \theta)$ and $P_m(x,-\theta)$  in the case $n$ is even and \eqref{Q} and \eqref{R} for $Q_{m+1}(x, \theta)$ and $R_m(x,\theta)$  in the case $n$ is odd, for $P^+$ and $P^-$, as given in the statement of Theorem~\ref{Explicit}.  However, in practice, such a direct derivation of eqs.~\eqref{P}, \eqref{Q}, and \eqref{R} seems difficult.   It is possible, though, to solve them for small $n$ and thereby to be able to guess the form of these polynomials for all $n$.  Alternatively, one can guess them after directly solving the equation \eqref{thetasumlow} for small $n$ and this is what we did.

\section{The Even Polynomial Case}
\label{Sect:Even}
Now we consider Question~\ref{Generalized Question} for the functions $1,x^2,x^4,\dots, x^{2n-2}$. We will show existence by computing the switch function, and we shall prove its uniqueness.

Recall that the degree $n$ Jacobi polynomial \cite{Freud} for parameters $\theta$ and $-\theta$ is
\begin{equation}
\label{Jac}
\begin{split}
J_n(\theta, -\theta; x) &= (-1)^n\left(\frac{1-x}{1+x}\right)^\theta\frac{n!}{(2n)!}\frac{d^n}{dx^n}\{ (1+x)^{n+\theta}(1-x)^{n-\theta}\}\\
&=x^n-\theta x^{n-1} -\frac{(n-1)(n-2\theta^2)}{2(2n-1)}x^{n-2}+\dots .
\end{split}
\end{equation}
This polynomial has $n$ distinct (non-zero) roots, $\zeta_i$ on $(-1,1)$, being one of a sequence of orthogonal polynomials on this interval.  

\begin{thm}
\label{Jacobi}
Let $n\in\N$, $-1\leq\lambda\leq1$, and again $\theta=(1+(-1)^{n-1}\lambda)/2$.
A switch function $\sigma\in D_n$ satisfying 
\[
0=\langle p,\sigma-\lambda\rangle
\] 
for any even polynomial, $p$, of degree $\leq2n-2$ is given by $\sigma=\Sigma_n(\mathbf x)$, where $x_i=\Abs{\zeta_i}$, and $\zeta_i$ are the roots of the Jacobi polynomial $J_n(\theta,-\theta;x)$  ordered by absolute value.
\end{thm}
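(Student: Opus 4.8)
The plan is to reduce the functional condition to a system of power-sum equations for the switch points, then to convert that system — via the interleaving machinery of Section~\ref{Sect:Poly} — into a single identity about the roots of the Jacobi polynomial, and finally to prove that identity by the same hypergeometric argument used for Theorem~\ref{Explicit}. For the first step: the even polynomials of degree $\leq 2n-2$ are spanned by $1,x^2,\dots,x^{2n-2}$, so $0=\langle p,\sigma-\lambda\rangle$ for all of them is equivalent to $\int_0^1 t^{2k}\sigma(t)\,dt=\frac{\lambda}{2k+1}$ for $k=0,\dots,n-1$. Writing $\sigma=\Sigma_n(\mathbf x)$ and inserting $f(t)=t^{2k}$ into the term-by-term formula for $\langle f,\Sigma_n(\mathbf x)\rangle$ used in the proof of \weaks-continuity of $\Sigma_n$, a short computation turns this into
\[
x_n^{2k+1}-x_{n-1}^{2k+1}+\dots+(-1)^{n-1}x_1^{2k+1}=\theta \qquad (k=0,\dots,n-1),
\]
with $\theta=(1+(-1)^{n-1}\lambda)/2$. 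These are equations of the form~\eqref{altdelta} with $\delta_i=(-1)^{n-i}$, but with exponents ranging over $\{1,3,\dots,2n-1\}$ rather than $\{1,\dots,n\}$ — precisely the case $\mathcal I=\{0,2,\dots,2n-2\}$ of Corollary~\ref{Sequence}. Throughout we assume $\lambda\in(-1,1)$, so $\theta\in(0,1)$; the endpoints are handled at the end.

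Next I would pass from the Jacobi roots to a power-sum statement. Let $\zeta_1,\dots,\zeta_n\in(-1,1)$ be the roots of $J_n(\theta,-\theta;x)$, let $\xi_1,\dots,\xi_{n-m}$ be the positive ones, and $\eta_1,\dots,\eta_m$ the absolute values of the negative ones; all lie in $(0,1)$. Since $2k$ is even, $\xi_i^{2k+1}=\zeta_i^{2k+1}$ for a positive root and $-\eta_j^{2k+1}=\zeta_j^{2k+1}$ for a negative one, so $\sum_i\xi_i^{2k+1}-\sum_j\eta_j^{2k+1}=\sum_{i=1}^n\zeta_i^{2k+1}$. Hence, once we establish
\[
\sum_{i=1}^n\zeta_i^{2k+1}=\theta \qquad (k=0,\dots,n-1),
\]
the $\xi$'s and $\eta$'s solve the plus--minus system~\eqref{thetasumm} for the exponent set $\mathcal I$, and Corollary~\ref{Plusminus} (in the form of Corollary~\ref{Sequence}) gives at once that $m=\lfloor n/2\rfloor$, that the two sets interleave as in~\eqref{interleave}, and therefore that the merged, increasingly sorted list $x_1<\dots<x_n$ of the $\Abs{\zeta_i}$ lies in $\Delta^n$ and makes the alternating sums above equal $\theta$ — the $\xi$'s occupying the $+$ slots and the $\eta$'s the $-$ slots, as the parity of $(-1)^{n-i}$ confirms. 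This $\mathbf x$ is the one in the statement, so the theorem follows from the displayed power-sum identity, with no separate check that $\mathbf x\in\Delta^n$ needed.

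To prove that identity I would use the reciprocal polynomial $J^*(x)=x^nJ_n(\theta,-\theta;1/x)=\prod_{i=1}^n(1-\zeta_i x)$, for which $-\log J^*(x)=\sum_{\ell\geq1}\frac1\ell\bigl(\sum_i\zeta_i^\ell\bigr)x^\ell$. Comparing coefficients of $x,x^3,\dots,x^{2n-1}$, the identity is equivalent to $-\log J^*(x)-\tfrac\theta2\log\tfrac{1+x}{1-x}$ being an even power series modulo $x^{2n+1}$, i.e.\ to
\[
W(x)\eqdef\left(\frac{1+x}{1-x}\right)^{\theta/2}J^*(x)
\]
being even modulo $x^{2n+1}$ — equivalently, $J^*$ is the denominator of a Pad\'e approximant to $\bigl(\tfrac{1-x}{1+x}\bigr)^{\theta/2}$, whose $\theta$-th power is exactly the Jacobi weight appearing in~\eqref{Jac}. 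I would prove this exactly as in Theorem~\ref{Explicit}: starting from the Jacobi equation $(1-x^2)J''+(2\theta-2x)J'+n(n+1)J=0$, the substitution $x\mapsto1/x$ together with the $\bigl(\tfrac{1+x}{1-x}\bigr)^{\theta/2}$ prefactor converts it into a Gauss hypergeometric equation for $W$ with indicial exponents $0$ and $2n+1$ at the origin (the analogue of~\eqref{Wode}); since $0<\theta<1$ no logarithmic terms enter, so the solution regular at the origin must be an even power series plus a constant multiple of $x^{2n+1}\,{}_2F_1(\dots)$, which is precisely the evenness modulo $x^{2n+1}$ required. Alternatively, one inserts the Rodrigues expression~\eqref{Jac} and tracks the first $2n+1$ Taylor coefficients of $W$ directly. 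For the endpoints $\lambda=\pm1$ (where $\theta\in\{0,1\}$ and Corollary~\ref{Plusminus} does not apply), $J_n(\theta,-\theta;x)$ is the Legendre polynomial (if $\theta=0$) or degenerates with a root at $\pm1$ (if $\theta=1$); its roots are symmetric about $0$, so the $\Abs{\zeta_i}$ occur in coincident pairs, $\Sigma_n(\mathbf x)$ reduces to a constant function, and $\langle p,\sigma-\lambda\rangle=0$ is immediate.

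The hard part is this last step: pinning down the Gauss equation for $W$ with the correct parameters and indicial exponents $0,2n+1$ — equivalently, the Taylor-coefficient calculation from the Rodrigues formula that identifies $J^*$ with the Pad\'e denominator of $\bigl(\tfrac{1-x}{1+x}\bigr)^{\theta/2}$. The two reductions in the first two paragraphs are routine once Corollaries~\ref{Plusminus} and~\ref{Sequence} are in hand, and the endpoint cases are a minor addendum.
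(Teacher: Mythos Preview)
Your proposal is correct and follows essentially the same route as the paper: reduce to the odd-exponent power-sum system, invoke Corollary~\ref{Sequence} to pass from the signed $\zeta_i$'s to the interleaved $x_i=\lvert\zeta_i\rvert$, and then verify the power-sum identity by showing, via the Jacobi differential equation, that a suitable prefactor times $J^*(x)$ is even modulo $x^{2n+1}$. The only difference is cosmetic: the paper uses the prefactor $(1-x)^{-\theta}$ rather than your $\bigl(\tfrac{1+x}{1-x}\bigr)^{\theta/2}$, but the two differ by the even function $(1-x^2)^{-\theta/2}$, so the evenness conditions coincide and the resulting ODE is hypergeometric in $x^2$ in both cases (with the same indicial exponents $0$ and $2n+1$ in $x$); your separate treatment of the endpoints $\theta\in\{0,1\}$ is an addition the paper does not spell out.
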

\begin{proof}
Note that if $\sigma=\Sigma_n(x)$, then this condition on $\sigma$ is equivalent to the system of equations,
\begin{equation}
\label{thetasumhigh}
x_1^{2k-1}-x_2^{2k-1} + x_3^{2k-1} - \dots + (-1)^{n-1}x_n^{2k-1}=\theta, \qquad (1 \le k \le n) .
\end{equation}

Consider the system of equations
\begin{equation}
\label{positive}
\theta = \sum_{i=1}^n \zeta_i^{2k-1}
\end{equation}
for $1\leq k \leq n$. This is the same as eq.~\eqref{thetasumhigh}, but without negative signs. Because the exponents are all odd, this is equivalent to 
\[
\theta = \sum_{i=1}^n (\sgn \zeta_i)\Abs{\zeta_i}^{2k-1} .
\]
If we require $-1\leq \zeta_i\leq 1$ and label in order of increasing $\Abs{\zeta_i}$, then Corollary~\ref{Sequence} shows that $\sgn\zeta_i = (-1)^{n-i}$. Therefore any solution of eqs.~\eqref{positive} with $-1\leq\zeta_i\leq1$ gives a solution of eqs.~\eqref{thetasumhigh} by $x_i=\Abs{\zeta_i}$.

We can easily adapt the method explained in the preliminary remarks to see that  $\{\zeta_1, \zeta_2, \dots, \zeta_n\}$ will be a solution to eqs.~\eqref{positive} if and only if these are the roots of an order $n$ polynomial, $P$, such that
\begin{equation}
\label{even}
P^*(x) =  \exp(-s_1 x - \half s_2x^2 - \tfrac{1}{3}s_3 x^3 -\tfrac{1}{4} s_4 x^4 - \dots )
\text{ where }
s_{2k-1} = \theta \text{ for } 1 \le k \le n .
\end{equation}
Equation \eqref{even} is equivalent to requiring that $(1-x)^{-\theta}P^*(x)$  is an even function of $x$ up to and including order $x^{2n}$.

Now let $P(x)= J_n(\theta,\-\theta;x)$. We need to show that for $0<\theta<1$,
\[
G(x) \eqdef (1-x)^{-\theta}P^*(x)
\]
is even up to order $x^{2n}$.

We first notice that the Jacobi polynomial, $P(x)=J_n(\theta, -\theta;x)$ satisfies the differential equation
\begin{equation}
\label{Jdiff}
(1-x^2)P'' + (2\theta-2x)P' + n(n+1)P=0
\end{equation}
\eqref{Jdiff}, we deduce that the reciprocal polynomial, $P^*$, satisfies
\begin{equation*}
x(1-x^2){P^*}''+2[(n-1)x^2+\theta x-n]{P^*}'-n[(n-1)x+2\theta]P^*=0 ,
\end{equation*}
and $G$ satisfies
\begin{equation*}
x(1-x^2)G''+[-2n-(2\theta - 2n + 2)x^2]G'-(\theta-n)(\theta-n+1)x G=0.
\end{equation*}
This is a hypergeometric equation with independent variable $x^2$ and parameters $a=(\theta-n)/2$, $b=(\theta-n+1)/2$ and $c=-n+\half$.  Therefore \cite{WW} $G$ may be written in the form
\begin{equation}
\label{Geq}
G(x)={}_2F_1\left(\tfrac{\theta-n}{2}, \tfrac{\theta-n+1}{2}, -n+\half;x^2\right)+Cx^{2n+1}{}_2F_1\left(\tfrac{\theta+n+1}{2}, \tfrac{\theta+n+2}{2}, n+\tfrac{3}{2}; x^2\right)
\end{equation}
in which $C$ is a constant.    We refer to \cite[p.~299, Ex.~18]{WW} as in the proof of Theorem~\ref{Explicit}.   We see from \eqref{Geq} that $G(x)$ is indeed even to order $x^{2n}$.

Since the roots of  $J_n(\theta, -\theta; x)$ are in the interval $(-1,1)$, we may  conclude that when their absolute values are  labelled in increasing order, they will solve eqs.~\eqref{thetasumhigh} and be the switch points for the desired switch function. 
\end{proof}
For example, let $n=5$, $\theta=1/3$ as before.  The Jacobi polynomial in \eqref{Jac} is
\begin{equation*}
x^5- \frac{1}{3}x^4-\frac{86}{81}x^3+\frac{62}{243}x^2+\frac{157}{729}x-\frac{143}{6561}
\end{equation*}
and the roots are $\zeta_1\doteq.0948419$, $\zeta_2\doteq-.4571986$, $\zeta_3\doteq.6167796$, $\zeta_4\doteq-.8641519$, 
$\zeta_5\doteq.9430623$. The reader might care to try the resulting $x_i$ in \eqref{thetasumhigh}.

\subsection{Uniqueness}
\begin{thm}
\label{Uniqueness2}
 For all $n\in \mathbb{N}$ and $-1<\lambda<1$, the switch function $\sigma\in D_n$ such that $\sigma-\lambda$ is orthogonal to all even polynomials of degree $\leq 2n-2$ is unique.
\end{thm}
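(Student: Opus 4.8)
The plan is to imitate the proof of Theorem~\ref{Uniqueness}, reducing uniqueness of $\sigma$ first to the uniqueness of an associated polynomial and then to the nonvanishing of a determinant. As in the proof of Theorem~\ref{Jacobi}, a switch function $\sigma=\Sigma_n(\mathbf x)\in D_n$ for which $\sigma-\lambda$ is orthogonal to every even polynomial of degree $\leq 2n-2$ is precisely a solution $\mathbf x\in\Delta^n$ of the system~\eqref{thetasumhigh}. By Corollary~\ref{Sequence}, any such $\mathbf x$ has $0<x_1<\dots<x_n<1$, and $\zeta_i\eqdef(-1)^{n-i}x_i$ (the signs being forced) solves the sign-free system~\eqref{positive}; conversely, the ordered absolute values of the $\zeta_i$ recover $\mathbf x$. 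Since, as shown in the proof of Theorem~\ref{Jacobi}, the numbers $\zeta_1,\dots,\zeta_n$ are exactly the roots of a monic polynomial $P$ of degree $n$ characterised by~\eqref{even} --- i.e.\ by the requirement that $(1-x)^{-\theta}P^*(x)$ be even up to and including order $x^{2n}$ --- it suffices to show that at most one monic polynomial $P$ of degree $n$ has this property.

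Next I would linearise this last condition exactly as in the proof of Theorem~\ref{Uniqueness}. Write $P^*(x)=1+q_1x+\dots+q_nx^n$ and let $g_j\eqdef\binom{\theta+j-1}{j}$ (so that $g_0=1$, $g_j>0$ for $\theta>0$, and $g_j=0$ for $j<0$) be the Taylor coefficients of $(1-x)^{-\theta}$. The vanishing of the coefficients of $x^1, x^3, x^5,\dots,x^{2n-1}$ in $(1-x)^{-\theta}P^*(x)$ is the square linear system $\sum_{i=1}^{n}g_{2k-1-i}\,q_i=-g_{2k-1}$, $k=1,\dots,n$, i.e.\ $M\mathbf q=-\mathbf g$ where $M$ is the $n\times n$ matrix with entries $M_{ki}=g_{2k-1-i}$ and $\mathbf g=(g_1,g_3,\dots,g_{2n-1})^{\top}$. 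Theorem~\ref{Jacobi} already guarantees that this system is consistent, the reciprocal of the Jacobi polynomial $J_n(\theta,-\theta;x)$ of~\eqref{Jac} being one solution; hence uniqueness of $P$ --- and with it of $\mathbf x$ and of $\sigma$ --- is equivalent to $\det M\neq0$.

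The main obstacle is therefore to prove $\det M\neq0$ for every $\theta\in(0,1)$. This is the analogue, for the present case, of the statements $\det K\neq0$ established in Propositions~\ref{DetKeven} and~\ref{DetKodd}, and I would treat it in the same spirit: $M$ is assembled from $g_j=(-1)^j\binom{-\theta}{j}$ in a Hankel-type pattern, and I expect $\det M$ to telescope into a product of ratios of Gamma functions that has constant sign on $(0,1)$; I would carry out that evaluation in the appendix. Conceptually, $\det M\neq0$ is the assertion that the Padé table of $\bigl((1-x)/(1+x)\bigr)^{\theta}$ is normal along the diagonal: cross-multiplying~\eqref{even} shows $P^*(x)/P^*(-x)\equiv\bigl((1-x)/(1+x)\bigr)^{\theta}\pmod{x^{2n+1}}$, so $P^*$ and $x\mapsto P^*(-x)$ are the numerator and denominator of the diagonal $[n/n]$ Padé approximant of this (non-rational) function, which is classically known to be unique. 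Either route closes the proof; the remainder is a direct transcription of the argument used for the polynomial case of Theorem~\ref{Uniqueness}.
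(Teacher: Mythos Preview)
Your proposal is correct and follows essentially the same approach as the paper: reduce uniqueness of $\sigma$ to uniqueness of the monic polynomial $P$ satisfying~\eqref{even}, linearise the odd-coefficient conditions on $(1-x)^{-\theta}P^*(x)$ to obtain a square system, and reduce everything to the nonvanishing of the coefficient determinant on $(0,1)$. Your matrix $M$ is precisely the paper's matrix $B$ of \eqref{Ba}--\eqref{Bb} (with $g_j=b_j$), and the paper likewise defers the evaluation of $\det B$ to the appendix (Proposition~\ref{DetB}), where it is computed as the explicit product~\eqref{det2}. Your additional remark that $P^*(x)/P^*(-x)$ is the diagonal $[n/n]$ Pad\'e approximant to $\bigl((1-x)/(1+x)\bigr)^{\theta}$ is a nice conceptual gloss not present in the paper, though to make it a self-contained alternative you would still need to verify normality of the Pad\'e table at that entry---which amounts to the same determinant condition.
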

\begin{proof}
Uniqueness of $\sigma$ means uniqueness of the switch points. The roots of $P$ are $\zeta_i=(-1)^{n-i}x_i$, therefore the monic polynomial $P$ is uniquely determined by the switch points. It is therefore sufficient to show that $P$ is uniquely determined by eq.~\eqref{even}.

Write $P^*(x) = 1+a_1x+a_2x^2 + \dots a_{n}x^{n}$ and expand $(1-x)^{-\theta}P^*(x)$. We require the coefficients of odd powers of $x$ up to $x^{2n-1}$ to vanish. This is equivalent to the matrix equation:
\begin{equation}
\label{BAV}
BA=-V
\end{equation}
where, when $n$ is even,
$B$, $A$, and $V$ are the $n \times n$, $n\times 1$ and $n\times 1$ matrices
\begin{equation}
\label{Ba}
B=\begin{pmatrix} 
1 & 0 & 0 & 0  & \dots & 0 \\
 b_2 & b_1 & 1 & 0 &  \dots & 0 \\
 b_4 & b_3 & b_2 & b_1  & \dots & 0\\ 
 \vdots & \vdots & \vdots &\vdots & \vdots & \vdots\\ 
 b_{n-2} & b_{n-3} & b_{n-4} & b_{n-5} &  \dots & 0 \\ 
 b_n & b_{n-1} & b_{n-2} & b_{n-3} & \dots  & b_1\\  
 b_{n+2} & b_{n+1} & b_n & b_{n-1} & \dots  & b_3\\ 
 \vdots & \vdots & \vdots & \vdots & \vdots & \vdots  \\ 
 b_{2n-2} & b_{2n-3} & b_{2n-4} & b_{2n-5}  & \dots &  b_{n-1}
 \end{pmatrix}, \quad
A=\begin{pmatrix} a_1\\a_2\\ a_3  \\ a_4 \\ \vdots  \\a_n\end{pmatrix}, \quad V=\begin{pmatrix} b_1\\b_3\\ b_5 \\ \vdots \\ b_{n-1} \\ b_{n+1} \\ b_{n+3} \\ \vdots \\  b_{2n-1}\end{pmatrix},
\end{equation}
and when $n$ is odd, $B$, $A$, and $V$ take the form
\begin{equation}
\label{Bb}
B=\begin{pmatrix} 
1 & 0 & 0 & 0  & \dots & 0 \\
 b_2 & b_1 & 1 & 0 &  \dots & 0 \\
 b_4 & b_3 & b_2 & b_1  & \dots & 0\\ 
 \vdots & \vdots & \vdots &\vdots & \vdots & \vdots\\ 
 b_{n-3} & b_{n-4} & b_{n-5} & b_{n-6} &  \dots & 0 \\ 
 b_{n-1} & b_{n-2} & b_{n-3} & b_{n-4} & \dots  &1\\  
 b_{n+1} & b_{n} & b_{n-1} & b_{n-2} & \dots  & b_2\\ 
 \vdots & \vdots & \vdots & \vdots & \vdots & \vdots  \\ 
 b_{2n-2} & b_{2n-3} & b_{2n-4} & b_{2n-5}  & \dots &  b_{n-1}
 \end{pmatrix}, \quad
A=\begin{pmatrix} a_1\\a_2\\ a_3  \\ a_4 \\ \vdots \\ a_n\end{pmatrix}, \quad V=\begin{pmatrix} b_1\\b_3\\ b_5 \\ \vdots \\ b_{n-2} \\ b_{n} \\ b_{n+2} \\ \vdots  \\ b_{2n-1}\end{pmatrix},
\end{equation}
where 
\begin{equation*}
b_m= \frac{\theta(\theta +1) \dots (\theta +m-1)}{m!}.
\end{equation*}
We show in the appendix (Prop.~\ref{DetB}) that $\det B$ never vanishes for $\theta\in (0,1)$ whereupon the matrix equation \eqref{BAV} will have a unique solution and therefore  there will be a unique polynomial $P$ that determines $\sigma$.
\end{proof}

\section{Sines}
Consider the functions $f_k(t) = \sin \frac{k\pi}{2}t$ for $k=1,\dots,n$. The problem of finding switch functions in this case reduces to the polynomial problem by a simple change of variables.

Recall that the Chebyshev polynomial $T_k$ is a degree $k$ polynomial satisfying 
\[
T_k(\cos x) = \cos kx .
\]
Suppose that $\sigma$ is a switch function satisfying $0=\langle f_k,\sigma-\lambda\rangle$. The change of variables $s = \cos\frac\pi2t$ gives $T_k(s) = \cos \frac{k\pi}2t$, and so
\begin{align*}
0 &= \int_0^1 \left(\sigma(t)-\lambda\right) \sin \tfrac{k\pi}2t\,dt \\
&= -\frac2{k\pi} \int_0^1 \left(\sigma(t)-\lambda\right) \frac{d}{dt}\cos \tfrac{k\pi}2t\,dt\\
&= \frac2{k\pi} \int_0^1\left(\sigma(\tfrac2\pi\cos^{-1} s) - \lambda\right) T'_k(s)\,ds .
\end{align*}
Now, $T_k'$ is of degree $k-1$, and these form a basis of polynomials. Therefore, the switch function
\[
s\mapsto \sigma(\tfrac2\pi\cos^{-1} s)
\]
satisfies our problem for polynomials of degree up to $n-1$.

Note, however, that the order of positive and negative values has been reversed.

So, suppose that $(x,1,\dots,x_n)$ is the solution for the polynomial problem with parameter $(-1)^n\lambda$, and let $y_j=\frac2\pi\cos^{-1}x_{n+1-j}$. The corresponding switch functions are related by 
\[
\Sigma_n(\mathbf y)(t) = (-1)^n \Sigma_n(\mathbf x)(\cos\tfrac\pi2t) ,
\]
so $\mathbf y$ is a solution the the sine problem with parameter $\lambda$.

\subsection*{Acknowledgments}
BSK thanks Michael M.\ Kay for very helpful conversations.

\appendix
\section{Appendix}
In this appendix, we show that the determinants $\det K$ ($K$ as in eq. \eqref{K}) and $\det B$ ($B$ as in eqs.
\eqref{Ba}, \eqref{Bb}) never vanish for $\theta \in (0,1)$.   In fact, in order to show this, we evaluate them in full.

We evaluate $\det K$ separately for $n$ even and for $n$ odd.

\begin{prop}
\label{DetKeven}
For $n=2m$, the determinant of the matrix in eq.~\eqref{K} is
\beq
\label{detKeven}
\det K = \frac{\theta^m(\theta^2-1)^{m-1}(\theta^2-4)^{m-2}\dots(\theta^2-[m-1]^2)}{m^m(m^2-1)^{m-1}\dots(m^2-[m-1]^2)}  .
\eeq
\end{prop}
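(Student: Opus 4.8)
The plan is to regard $\det K$ as a polynomial in $\theta$ and to pin it down by counting zeros together with one trivial evaluation. The elementary fact I will lean on is: if a matrix $A(\theta)$ has polynomial entries and $A(\theta_0)$ has corank $r$, then $(\theta-\theta_0)^r\mid\det A(\theta)$. Indeed, choosing constant invertible matrices $U,V$ with $UA(\theta_0)V=\left(\begin{smallmatrix}I&0\\0&0\end{smallmatrix}\right)$, the last $r$ rows of $UA(\theta)V$ vanish at $\theta_0$, hence are each divisible by $\theta-\theta_0$, so $(\theta-\theta_0)^r\mid\det(UAV)=\det U\,\det V\,\det A$, and $\det U\,\det V$ is a nonzero constant.

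For $n=2m$ the entries are $K_{i\ell}=\binom{\theta}{m+i-\ell}$ with $1\le m+i-\ell\le 2m-1$, so each term of the Leibniz expansion is a product of binomials whose degrees sum to $m^2$; thus $\deg_\theta\det K\le m^2$. Now I locate $m^2$ zeros of $\det K$, counted with multiplicity. At $\theta=0$ every entry $\binom{0}{k}$ with $k\ge1$ vanishes, so $K(0)=0$ and $\theta^m\mid\det K$. At $\theta=j$ with $1\le j\le m-1$, every entry of rows $i=j+1,\dots,m$ has lower index $m+i-\ell\ge i>j$, so those $m-j$ rows vanish; the corank is at least $m-j$, hence $(\theta-j)^{m-j}\mid\det K$. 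The delicate case is $\theta=-j$ with $1\le j\le m-1$, where \emph{no} single entry vanishes. Here I use $\binom{-j}{k}=(-1)^k\binom{k+j-1}{j-1}$, so $K_{i\ell}(-j)=(-1)^{m+i-\ell}\binom{(m+i-\ell)+j-1}{j-1}$; absorbing the signs $(-1)^i$ and $(-1)^\ell$ into invertible diagonal matrices on the left and right leaves the Toeplitz matrix $\bigl(P(m+i-\ell)\bigr)_{i,\ell}$ of the single polynomial $P(s)=\binom{s+j-1}{j-1}$, of degree $j-1$. Expanding $(m+i-\ell)^a$ in powers of $\ell$ shows each column of this matrix is a linear combination of the $j$ fixed vectors $\bigl((m+i)^b\bigr)_{i=1}^{m}$, $b=0,\dots,j-1$; hence $\operatorname{rank}K(-j)\le j$, the corank is at least $m-j$, and $(\theta+j)^{m-j}\mid\det K$.

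These multiplicities sum to $m+2\sum_{j=1}^{m-1}(m-j)=m^2$, so the polynomial $\theta^m\prod_{j=1}^{m-1}(\theta^2-j^2)^{m-j}$, itself of degree $m^2$, divides $\det K$, and therefore $\det K(\theta)=c\,\theta^m\prod_{j=1}^{m-1}(\theta^2-j^2)^{m-j}$ for some constant $c$. To determine $c$ I evaluate at $\theta=m$: then $K(m)_{i\ell}=\binom{m}{m+i-\ell}$ equals $0$ for $i>\ell$ and $1$ for $i=\ell$, so $K(m)$ is upper triangular with unit diagonal and $\det K(m)=1$ (this also shows $\det K\not\equiv 0$). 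Hence $c=\bigl(m^m\prod_{j=1}^{m-1}(m^2-j^2)^{m-j}\bigr)^{-1}$, which is precisely \eqref{detKeven}. The one genuine obstacle is the vanishing at the negative integers $\theta=-j$, where the matrix loses rank although no entry does and one must see this through the polynomial--Toeplitz rank collapse above; I would expect the remaining appendix determinants (the odd-$n$ case and the matrix $B$) to yield to the same strategy of writing the determinant as a polynomial in $\theta$, reading off its forced zeros, and fixing the constant at a convenient value of $\theta$.
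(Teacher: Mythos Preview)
Your proof is correct and follows the same overall architecture as the paper's: observe that $\det K$ is a polynomial in $\theta$ of degree at most $m^2$, locate $m^2$ zeros with multiplicity, and pin down the constant by evaluating at $\theta=m$ where $K$ is unit upper triangular.

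The difference lies in how the zeros are exhibited. For $\theta=j$ with $0\le j\le m-1$ both arguments are essentially the same (vanishing rows versus row divisibility by $\binom{\theta}{i}$). For the negative zeros $\theta=-j$, the paper proceeds by iterated row operations: using Pascal's identity $\binom{\theta}{k}+\binom{\theta}{k+1}=\binom{\theta+1}{k+1}$ to rewrite the last $m-1$ rows with upper parameter $\theta+1$, then the last $m-2$ rows with $\theta+2$, and so on, reading off the factors $(\theta+1)^{m-1}(\theta+2)^{m-2}\cdots$ from row divisibility at each stage. Your route is more structural: at $\theta=-j$ you recognise the (sign-adjusted) matrix as $\bigl(P(m+i-\ell)\bigr)_{i,\ell}$ for a polynomial $P$ of degree $j-1$, so its columns lie in a $j$-dimensional space and the corank is at least $m-j$; combined with the clean lemma that corank $r$ forces $(\theta-\theta_0)^r\mid\det A(\theta)$, this gives the factor $(\theta+j)^{m-j}$ in one stroke. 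Your argument avoids the inductive row manipulations and makes the mechanism at the negative integers transparent; the paper's row operations, on the other hand, are entirely elementary and need no auxiliary lemma. Either way one arrives at the same constant via $\det K(m)=1$, and your closing remark that the odd case and $\det B$ should yield to the same template is borne out by the paper.
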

\begin{proof}
For $n=2m$, 
\[
\det K = 
\begin{vmatrix}
\binom\theta{m} & \binom\theta{m-1} & \dots & \binom\theta{1} \\
\binom\theta{m+1} & \binom\theta{m} & \dots & \binom\theta{2} \\
\vdots & \vdots & \ddots & \vdots \\
\binom\theta{2m-1} & \binom\theta{2m-2} & \dots & \binom\theta{m}
\end{vmatrix}
\]
is a polynomial in $\theta$ of degree $m^2$. As an element of $\mathbb{Q}[\theta]$, the first row is divisible by  $\binom\theta{1}$, the second by $\binom\theta{2}$, $\dots$ and the $m$'th by $\binom\theta{m}$, hence $\det K$ has a factor
\[
\theta^m (\theta-1)^{m-1}\dots(\theta-m+1)
\]
of degree $\frac12m(m+1)$, so we want to find $\frac12m(m-1) = 1+2+\dots+(m-1)$ further factors.

The determinant is invariant under row operations. Add the $m-1$'st row to the $m$'th row, the $m-2$'nd row to the $m-1$'st row, $\dots$ and the 1st row to the second row. This shows that
\begin{equation}
\label{first row}
\det K = 
\begin{vmatrix}
\binom\theta{m} & \binom\theta{m-1} & \dots & \binom\theta{1} \\
\binom{\theta+1}{m+1} & \binom{\theta+1}{m} & \dots & \binom{\theta+1}{2} \\
\vdots & \vdots & \ddots & \vdots \\
\binom{\theta+1}{2m-1} & \binom{\theta+1}{2m-2} & \dots & \binom{\theta+1}{m}
\end{vmatrix} .
\end{equation}
The last $m-1$ rows are divisible by  $\theta+1$.

Starting from \eqref{first row}, add the $m-1$'st row to the $m$'th row, \dots\ and the 2nd row to the 3rd row. This shows that
\[
\det K = 
\begin{vmatrix}
\binom\theta{m} & \binom\theta{m-1} & \binom\theta{m-2} &\dots & \binom\theta{1} \\
\binom{\theta+1}{m+1} & \binom{\theta+1}{m} & \binom{\theta+1}{m-1} & \dots & \binom{\theta+1}{2} \\
\binom{\theta+2}{m+2} & \binom{\theta+2}{m+1} & \binom{\theta+2}{m} & \dots & \binom{\theta+2}{3} \\
\vdots & \vdots & \vdots &  \ddots & \vdots \\
\binom{\theta+2}{2m-1} & \binom{\theta+2}{2m-2} & \binom{\theta+2}{2m-3} &\dots & \binom{\theta+2}{m}
\end{vmatrix} .
\]
The last $m-2$ rows are divisible by  $\theta+2$.

Carrying on like this, we find that $\det K$ is proportional to
\[
\theta^m(\theta^2-1)^{m-1}(\theta^2-4)^{m-2}\dots(\theta^2-[m-1]^2) .
\] 
To determine the constant of proportionality, note that if $\theta=m$, then  $K$ is an upper triangular matrix with $1$'s on the diagonal, and hence $\det K=1$. This gives eq.~\eqref{detKeven}.

Alternatively, this is  a special case of Theorem 1 of \cite{RHPK} (paraphrased below as Thm.~\ref{RHPKthm}).  It corresponds to $N=m$ and the set of integers $L=\{m+1,m+2,m+3,...,2m\}$.
\end{proof}

\begin{prop}
\label{DetKodd}
For $n=2m+1$, the determinant of the matrix in eq.~\eqref{K} is
\beq
\label{detKodd}
\det K = \frac{\theta^m(\theta-1)^m(\theta+1)^{m-1}(\theta-2)^{m-1}\dots(\theta+m-1)(\theta-m)}{(m+1)^m m^m(m+2)^{m-1}(m-1)^{m-1}\dots1\cdot 2m} .
\eeq
\end{prop}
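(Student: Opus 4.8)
The plan is to mimic the proof of Proposition~\ref{DetKeven} step by step, keeping track of the index shift that occurs when $n$ changes from $2m$ to $2m+1$. For $n=2m+1$ the matrix in \eqref{K} is the $m\times m$ matrix with $(i,j)$ entry $\binom{\theta}{m+1+i-j}$, i.e.
\[
K=\begin{pmatrix}\binom\theta{m+1}&\binom\theta m&\cdots&\binom\theta2\\[2pt] \binom\theta{m+2}&\binom\theta{m+1}&\cdots&\binom\theta3\\[2pt] \vdots&\vdots&\ddots&\vdots\\[2pt] \binom\theta{2m}&\binom\theta{2m-1}&\cdots&\binom\theta{m+1}\end{pmatrix}.
\]
Exactly as in the even case, every term of the Leibniz expansion has degree $m(m+1)$ in $\theta$, so $\det K\in\mathbb{Q}[\theta]$ has degree $m(m+1)$, and it suffices to exhibit monic linear factors accounting for this whole degree and then fix the leading constant.

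First I would read off the factors visible in the rows. The smallest lower index occurring in row $i$ is $i+1$, and $\binom\theta{i+1}$ divides $\binom\theta k$ in $\mathbb{Q}[\theta]$ for every $k\geq i+1$; hence $\det K$ is divisible by $\prod_{i=1}^m\binom\theta{i+1}$, which up to a nonzero rational constant is $\theta^m(\theta-1)^m(\theta-2)^{m-1}(\theta-3)^{m-2}\cdots(\theta-m)$, of degree $m+\tfrac12 m(m+1)$. Note the effect of the shift: since the rows now begin with $\binom\theta{i+1}$ rather than $\binom\theta i$, the factor $(\theta-1)$ appears with multiplicity $m$ rather than $m-1$. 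This leaves $\tfrac12 m(m-1)$ of the degree still to account for, and I would obtain it by the same cascade of row operations used in the even case: adding each row to the one below it (processed from the bottom up) and using Pascal's identity $\binom\theta k+\binom\theta{k-1}=\binom{\theta+1}k$ turns rows $2,\dots,m$ into rows of $\binom{\theta+1}{\cdot}$'s, revealing the factor $(\theta+1)^{m-1}$; iterating on the resulting matrix, restricting the cascade successively to rows $3,\dots,m$, then $4,\dots,m$, and so on, produces $(\theta+2)^{m-2},\dots,(\theta+m-1)^1$. All the linear factors found in the two steps are distinct, so their product divides $\det K$; this product has degree $m(m+1)=\deg\det K$, so $\det K$ is a scalar multiple of $\theta^m(\theta-1)^m(\theta+1)^{m-1}(\theta-2)^{m-1}\cdots(\theta+m-1)(\theta-m)$.

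It remains to pin down the scalar, and this is the one genuine point of difference from the even case. The correct specialization is now $\theta=m+1$: then $\binom{m+1}{m+1+i-j}$ vanishes for $i>j$ and equals $1$ for $i=j$, so $K$ is upper triangular with unit diagonal and $\det K=1$. Substituting $\theta=m+1$ into the product above (so $\theta\mapsto m+1$, $\theta-1\mapsto m$, $\theta+1\mapsto m+2$, $\theta-2\mapsto m-1$, and so on) and dividing gives precisely the denominator in \eqref{detKodd}, completing the proof; alternatively, the statement is the $N=m$, $L=\{m+2,m+3,\dots,2m+1\}$ instance of Theorem~1 of \cite{RHPK} (Thm.~\ref{RHPKthm}). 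I do not anticipate a serious obstacle: the structure is identical to Proposition~\ref{DetKeven}, and the only care needed is in (a) tracking the row contents correctly after the index shift (which is what raises the multiplicity of $(\theta-1)$) and (b) evaluating at $\theta=m+1$ rather than $\theta=m$ and then reorganizing the resulting integer into the displayed zig-zag product.
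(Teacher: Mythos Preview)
Your proposal is correct and follows essentially the same approach as the paper's own proof: identify the row factors $\binom\theta{i+1}$, use the Pascal-identity row-operation cascade from the even case to extract the remaining $(\theta+k)^{m-k}$ factors, and evaluate at $\theta=m+1$ (where $K$ becomes upper triangular with unit diagonal) to fix the constant; the alternative appeal to the $N=m$, $L=\{m+2,\dots,2m+1\}$ case of Theorem~\ref{RHPKthm} is also exactly what the paper notes.
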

\begin{proof}
For case $n=2m+1$, 
\[
\det K = 
\begin{vmatrix}
\binom\theta{m+1} & \binom\theta{m} & \dots & \binom\theta{2} \\
\binom\theta{m+2} & \binom\theta{m+1} & \dots & \binom\theta{3} \\
\vdots & \vdots & \ddots & \vdots \\
\binom\theta{2m} & \binom\theta{2m-1} & \dots & \binom\theta{m+1}
\end{vmatrix}
\]
has degree $m^2+m$. The rows are divisible by $\binom\theta2,\binom\theta3,\dots,\binom\theta{m+1}$, so $\det K$ has a factor
\[
\theta^m\cdot (\theta-1)^m(\theta-2)^{m-1}\dots(\theta-m) 
\]
of degree $\frac12m(m+3)$. This again leaves $\frac12m(m-1)$ factors to be determined.

Using exactly the same row operations as in the previous case shows that the missing factors are exactly the same. Therefore, $\det K$ is proportional to $\theta^m(\theta-1)^m(\theta+1)^{m-1}(\theta-2)^{m-1}\dots(\theta+m-1)(\theta-m)$. To determine the constant of proportionality, note that if $\theta=m+1$ then $\det K=1$. This gives eq.~\eqref{detKodd}.

Again, this is a special case  of Theorem 1 of \cite{RHPK} (Thm.~\ref{RHPKthm}).  It corresponds to $N=m$  and $L=\{m+2,m+3,\dots,2m+1\}$.
\end{proof}

In either case, $\det K$  is manifestly never zero for $\theta \in (0,1)$.

\begin{prop}
\label{DetB}
For the matrix $B$ of \eqref{Ba}, \eqref{Bb}, 
\begin{equation}
\label{det2}
\det B = \theta^{\lfloor\frac{n}2\rfloor}\prod_{k=1}^{n-2} \frac{\left(\theta^2-k^2\right)^{\lfloor\frac{n-k}2\rfloor}}{(2k+1)!!}.
\end{equation}
which clearly does not vanish for $\theta \in (0,1)$.
\end{prop}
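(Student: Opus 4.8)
The plan is to follow the template of the proofs of Propositions~\ref{DetKeven} and \ref{DetKodd}. First I would record the uniform description of the matrix: writing $b_m$ for the coefficient of $x^m$ in $(1-x)^{-\theta}$, so that $b_0=1$, $b_m=\binom{\theta+m-1}{m}=(-1)^m\binom{-\theta}{m}$ for $m\ge 1$, and $b_m=0$ for $m<0$, one has $B_{jl}=b_{2j-1-l}$ for all $1\le j,l\le n$; this single formula reproduces both displays \eqref{Ba} and \eqref{Bb}. Factoring $(-1)^{l+1}$ out of column $l$ shows that, up to a fixed sign, $\det B=\det\bigl(\binom{-\theta}{r_j-l}\bigr)$ where the row indices $r_j$ range over the odd integers $1,3,\dots,2n-1$ and the column indices over $1,\dots,n$. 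This is exactly the type of binomial-coefficient determinant evaluated by Theorem~1 of \cite{RHPK} (Theorem~\ref{RHPKthm}), so the shortest route is to substitute the index set $L=\{1,3,\dots,2n-1\}$ into that theorem and simplify the resulting product to \eqref{det2}, just as was done for $\det K$.

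For a self-contained argument I would instead do the following. \emph{Step 1:} show $\det B$ is a polynomial in $\theta$ of degree exactly $\binom n2$. In the Leibniz expansion, any permutation $\pi$ with $\pi(j)\le 2j-1$ for all $j$ (the others contribute $0$) gives a term of degree $\sum_j\bigl(2j-1-\pi(j)\bigr)=n^2-\tfrac{n(n+1)}2=\binom n2$, independent of $\pi$; hence $\deg\det B\le\binom n2$, and the coefficient of $\theta^{\binom n2}$ is the numerical determinant $\det\bigl(1/(2j-1-l)!\bigr)_{j,l=1}^n$ (with $1/k!:=0$ for $k<0$), which evaluates to $\prod_{k=1}^{n-2}1/(2k+1)!!$ and is in particular nonzero, so $\deg\det B=\binom n2$. \emph{Step 2:} locate the roots with multiplicity. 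Since $b_m$ ($m\ge1$) is divisible by $\theta(\theta+1)\cdots(\theta+m-1)$ in $\mathbb{Q}[\theta]$, the $\lfloor n/2\rfloor$ rows with $2j-1>n$ consist entirely of entries $b_m$, $m\ge1$, each divisible by $\theta$, so $\theta^{\lfloor n/2\rfloor}\mid\det B$. The remaining factors $(\theta-k)$ and $(\theta+k)$, for $1\le k\le n-2$, are produced exactly as the $(\theta-j)$ and $(\theta+j)$ factors are in the proof of Proposition~\ref{DetKodd}, but by iterated \emph{column} operations: the Pascal-type identities $b_m(\theta)\pm b_{m-1}(\theta)=b_m(\theta\pm1)$ give $B_{jl}\pm B_{j,l+1}=b_{2j-1-l}(\theta\pm1)$, so after $k$ passes the transformed columns carry entries of the form $b_\cdot(\theta\pm k)$, which vanish at $\theta=\mp k$; keeping track of how many of them have lost their $b_0$ entry yields multiplicity $\lfloor(n-k)/2\rfloor$ for each of $\theta=k$ and $\theta=-k$. \emph{Step 3:} the total multiplicity $\lfloor n/2\rfloor+2\sum_{k=1}^{n-2}\lfloor(n-k)/2\rfloor$ equals $\binom n2=\deg\det B$, so there are no other roots, and combining with the leading coefficient from Step~1 gives \eqref{det2}; nonvanishing on $(0,1)$ is then immediate.

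The main obstacle is Step~2. Unlike the genuine Toeplitz matrix $K$, the matrix $B$ is ``striped'': consecutive rows shift the index of the sequence $(b_m)$ by $2$, and the number of leading and trailing zeros changes from row to row. Consequently the column operations do not preserve a single tidy form, and one must argue carefully — and uniformly in the parity of $n$ — that after $k$ steps exactly $\lfloor(n-k)/2\rfloor$ of the transformed columns acquire the factor $(\theta\mp k)$. If the direct bookkeeping becomes unwieldy, a more transparent route is to split $P^*(x)=E(x^2)+xO(x^2)$ into even and odd parts: with $(1-x)^{-\theta}=\phi_e(x^2)+x\phi_o(x^2)$ the vanishing conditions collapse to the single Hankel/Pad\'e-type system $\phi_e(y)O(y)+\phi_o(y)E(y)\equiv0\pmod{y^n}$, where $\phi_e^2-y\phi_o^2=(1-y)^{-\theta}$, and the $(\theta^2-k^2)$ structure should be more visible there. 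In any event, citing Theorem~\ref{RHPKthm} is the cleanest option and is consistent with the treatment of the companion determinants $\det K$.
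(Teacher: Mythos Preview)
Your primary route---writing $B_{jl}=(-1)^{l+1}\binom{-\theta}{(2j-1)-l}$ and invoking Theorem~\ref{RHPKthm} with $t=-\theta$ and $L=\{1,3,\dots,2n-1\}$---is exactly the paper's first proof; the paper even carries out the computation of $\alpha_h$, $\beta_k$, and $\kappa(L)$ explicitly to reach \eqref{det2}.

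Your self-contained sketch, however, diverges from the paper's alternative proof, and the paper's device neatly sidesteps precisely the bookkeeping you flag as the obstacle. Instead of hunting for the $(\theta-k)$ and $(\theta+k)$ factors separately via column operations, the paper first proves the \emph{parity} statement $\det B(\theta)=(-1)^{\lfloor n/2\rfloor}\det B(-\theta)$. It does this by introducing the companion matrix $D$ with $d_m=\binom{\theta}{m}$ in place of $b_m$ (so $D_{jl}(\theta)=(-1)^{l+1}B_{jl}(-\theta)$), striking the first row and column to get $\tilde B,\tilde D$, and exhibiting a unit-lower-triangular $W$ with $W_{ab}=(-1)^{a-b}\binom{-\theta}{a-b}$ such that $\tilde B=W\tilde D$; the required identity reduces to a binomial convolution coming from $(1-x)^{-t}=(1-x^2)^{-t}(1+x)^t$. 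Once parity is known, only the ``easy'' half of the roots is needed: row $j$ of $B$ has common factor $b_{2j-1-n}$, so $\det B$ is divisible by $\theta^{\lfloor n/2\rfloor}\prod_{k=1}^{n-2}(\theta+k)^{\lfloor(n-k)/2\rfloor}$, and parity forces the matching $(\theta-k)$ factors. Since this already accounts for the full degree $\binom n2$, the shape of \eqref{det2} follows; the constant is then pinned down by evaluating $\det\tilde D$ at $\theta=n-1$, where it collapses (via a separate lemma on the matrices $A_m=\bigl(\binom{m+1}{2a-b}\bigr)$) to $2^{(n-1)(n-2)/2}$. Compared with your Step~2, this avoids iterated column passes and the uniform-in-parity tracking of which columns acquire which factor; the cost is the auxiliary matrix identity $\tilde B=W\tilde D$ and the small determinant lemma for the normalizing constant.
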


One of us (RRH) and Philip Keningley will show in a forthcoming paper \cite{RHPK} that one can  evaluate a more general class of determinants, including $\det B$.   The following statement is a paraphrase of Theorem 1 of \cite{RHPK}.
\begin{thm}
\label{RHPKthm}
{\bf (R.R.~Hall and P.~Keningley \cite{RHPK})} Let $t$ be a real variable and let $L = \{l_1, l_2, \dots l_N\}$  be a set of integers such that $1 \le l_1 < l_2 <
\dots < l_N$.   Let $A$ be the $N\times N$ matrix with components
\[
A_{ij}=\binom{t}{l_i-j}.
\]
Then $\det A$ is given by the formula
\beq
\label{RHPK}
\det A = \kappa(L) \prod_{h=0}^{l_N - N}(t-h)^{\alpha_h} \prod_{k=1}^{N-1} (t+k)^{\beta_k}
\eeq
where $\alpha_h = \operatorname{card}\{i \mid l_i > N+h\}$ and $\beta_k = \operatorname{card}\{i \mid i >k, l_{i-k} > N-k\}$
(so $\sum_{h=0}^{l_N - N}\alpha_h + \sum_{k=1}^{N-1}\beta_k = l_1+l_2 + \dots + l_N - \binom{N}{2}$) and 
\[
\kappa(L)=\frac{\prod_{1\leq i<j\leq N}(l_j-l_i)}{\prod_{k=1}^N (l_k - 1)!}.
\]
\end{thm}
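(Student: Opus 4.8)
The plan is to recognize $\det A$ as a principal specialization of a Schur polynomial and then to read the factorization off the hook--content formula.

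First I would reverse the order of both the rows and the columns of $A$. Writing $\lambda_i\eqdef l_{N+1-i}-(N+1-i)$ for $i=1,\dots,N$, the hypothesis $1\le l_1<l_2<\dots<l_N$ forces $\lambda_1\geq\lambda_2\geq\dots\geq\lambda_N\geq 0$, so $\lambda$ is a partition with at most $N$ parts, and a substitution of indices shows that the double reversal gives
\[
\det A=\det\!\left(\binom{t}{\lambda_i-i+j}\right)_{i,j=1}^{N}.
\]
Since $(1+x)^t=\sum_k\binom{t}{k}x^k$, the assignment $e_k\mapsto\binom{t}{k}$ is exactly the specialization of the ring of symmetric functions obtained by setting $t$ of the variables equal to $1$ and the rest to $0$ (a genuine specialization when $t$ is a non-negative integer; the identities below are polynomial in $t$, so they then persist for all $t$). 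Applying the dual Jacobi--Trudi identity under this specialization identifies the displayed determinant with $s_{\lambda'}(1^t)$, the principal specialization of the Schur polynomial of the conjugate partition $\lambda'$ (the dual identity needs $N\geq\ell(\lambda)$, which holds).

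I would then invoke the hook--content formula $s_\mu(1^t)=\prod_{u\in\mu}\frac{t+c(u)}{h(u)}$ with $\mu=\lambda'$. Conjugation negates all contents and preserves all hook lengths, so
\[
\det A=\prod_{u\in\lambda}\frac{t-c(u)}{h(u)}=\frac{\prod_{u\in\lambda}\bigl(t-c(u)\bigr)}{\prod_{u\in\lambda}h(u)}.
\]
It remains to rewrite the two products. Grouping the cells of $\lambda$ by content, the number with content $h\geq 0$ is $\#\{i:\lambda_i-i\geq h\}=\#\{i:l_i>N+h\}=\alpha_h$, and the number with content $-k\leq-1$ is $\#\{j\leq N-k:\lambda_{j+k}\geq j\}=\#\{i\leq N-k:l_i>N-k\}=\beta_k$; hence $\prod_{u\in\lambda}\bigl(t-c(u)\bigr)=\prod_h(t-h)^{\alpha_h}\prod_k(t+k)^{\beta_k}$. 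For the denominator, the classical formula for a product of hook lengths in terms of the beta-numbers $\lambda_i+N-i$, namely $\prod_{u\in\lambda}h(u)=\frac{\prod_{i=1}^N(\lambda_i+N-i)!}{\prod_{1\leq i<j\leq N}\bigl(\lambda_i-\lambda_j-i+j\bigr)}$, becomes, after the substitutions $\lambda_i+N-i=l_{N+1-i}-1$ and $\lambda_i-\lambda_j-i+j=l_{N+1-i}-l_{N+1-j}$, exactly $\frac{\prod_{i=1}^N(l_i-1)!}{\prod_{1\leq i<j\leq N}(l_j-l_i)}=1/\kappa(L)$. Substituting both into the display yields the stated formula.

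The main difficulty is bookkeeping rather than a new idea: one has to pin down the dictionary between the ``beta numbers'' $l_i$, the partition $\lambda$, its conjugate, its contents and its hook lengths, verify the two combinatorial identities above with care (including the degenerate cases in which some $l_i\leq N$, so $\lambda$ acquires trailing zero parts, and the vanishing boundary exponents $\alpha_{l_N-N}$ and $\beta_{N-1}$), and check that the classical inputs apply. As a consistency check, $L=\{m+1,\dots,2m\}$ gives $\lambda=(m^m)$, whose content multiplicities and product of hooks reproduce eq.~\eqref{detKeven}. An alternative, self-contained route bypasses symmetric functions: $\det A$ is a polynomial in $t$ each of whose nonzero Leibniz terms has degree $d=\sum_i l_i-\binom{N+1}{2}$, with coefficient of $t^{d}$ equal to the Vandermonde-type determinant $\det\!\bigl(1/(l_i-j)!\bigr)_{i,j}=\kappa(L)$ (convention $1/m!=0$ for $m<0$); the factors $(t-h)^{\alpha_h}$ arise because every entry of row $i$ with $l_i>N$ is divisible by $t(t-1)\cdots(t-l_i+N+1)$; and the factors $(t+k)^{\beta_k}$ arise from iterating the Pascal-rule column operations $C_j\mapsto C_j+C_{j+1}$, the column analogue of the row operations used above for $\det K$. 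In that route the delicate step is tracking how the gaps in $L$ interact with those operations so as to recover the precise exponents $\beta_k$.
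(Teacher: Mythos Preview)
Your argument is correct. The identification $\det A=\det\bigl(\binom{t}{\lambda_i-i+j}\bigr)=s_{\lambda'}(1^t)$ via the dual Jacobi--Trudi identity is sound (the specialization $e_k\mapsto\binom{t}{k}$ is the principal specialization, and the polynomial-in-$t$ extension is the right way to pass from non-negative integer $t$ to all $t$). Your content and hook bookkeeping checks out: the count of cells of content $h\ge0$ reduces to $\#\{i:l_i>N+h\}=\alpha_h$, the count of cells of content $-k$ reduces to $\#\{m\le N-k:l_m>N-k\}=\beta_k$, and the standard formula $\prod_{u\in\lambda}h(u)=\prod_i(\lambda_i+N-i)!/\prod_{i<j}(\lambda_i-\lambda_j-i+j)$ becomes $1/\kappa(L)$ under $\lambda_i+N-i=l_{N+1-i}-1$.

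As for comparison: the paper does not prove this theorem at all. It is stated as a quotation of Theorem~1 of a forthcoming paper \cite{RHPK} and used as a black box in Propositions~\ref{DetKeven}, \ref{DetKodd}, and \ref{DetB}. The only arguments the paper gives in this direction are the \emph{ad hoc} row-operation proofs of the special cases $L=\{m+1,\dots,2m\}$ and $L=\{m+2,\dots,2m+1\}$, which are exactly the ``alternative, self-contained route'' you sketch at the end (Pascal-rule operations to expose the $(t+k)$ factors, divisibility of rows to expose the $(t-h)$ factors, and evaluation at a special $t$ to fix the constant). So your main proof is a genuinely different---and more conceptual---argument than anything in the paper: it replaces an induction on row/column operations by a single appeal to the hook--content formula, at the cost of importing the symmetric-function machinery. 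Your alternative sketch, by contrast, is precisely the method the paper uses for its special cases; if you wanted a self-contained write-up in the spirit of the paper, that is the one to flesh out, and the ``delicate step'' you flag (tracking how gaps in $L$ govern the exponents $\beta_k$) is indeed the crux.
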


\begin{proof}[Proof of Prop.~\ref{DetB}]
To see how \eqref{det2} arises as a special case of Theorem \ref{RHPKthm}, first observe that if we set $N=n$, $t=-\theta$ and $l_i=2i-1$, then 
\[
B_{ij}= b_{2i-j-1} = (-1)^{j+1}\binom{t}{2i-j-1}= (-1)^{j+1} A_{ij} ,
\]
so $\det B = (-1)^{\lfloor\frac{n}2\rfloor}\det A$. Working through the definitions in that theorem gives,
\[
\alpha_h = \card\{i\leq n \mid 2i-1>n+ h\} = \lfloor\tfrac{n-h}2\rfloor
\]
and
\[
\beta_k = \card\{i\leq n \mid i>k,\ 2i-2k-1> n-k\} = \alpha_k = \lfloor\tfrac{n-h}2\rfloor .
\]
Note that $\alpha_{n-1}=0$. Equation \eqref{RHPK} gives,
\begin{align*}
\det A &= \kappa(L) \prod_{h=0}^{n-1} (t-h)^{\alpha_k} \prod_{k=1}^{n-1} (t+k)^{\beta_k} 
= \kappa(L) t^{\lfloor\frac{n}2\rfloor} \prod_{k=1}^{n-2} (t^2-k^2)^{\lfloor\frac{n-k}2\rfloor} \\
&= (-1)^{\lfloor\frac{n}2\rfloor}\kappa(L) \theta^{\lfloor\frac{n}2\rfloor} \prod_{k=1}^{n-2} (\theta^2-k^2)^{\lfloor\frac{n-k}2\rfloor} ,
\end{align*} 
Thus
\beq
\label{almost B}
\det B = \kappa(L) \theta^{\lfloor\frac{n}2\rfloor} \prod_{k=1}^{n-2} (\theta^2-k^2)^{\lfloor\frac{n-k}2\rfloor} .
\eeq

Now,
\[
\kappa(L) = \frac{\prod_{1\leq i<j\leq n} 2(j-i)}{\prod_{k=1}^n (2k-2)!} .
\]
The numerator can be written as
\[
\prod_{k=0}^{n-2} 2^{k+1}(k+1)!
\]
and the denominator (after shifting $k$) as
\[
\prod_{k=0}^{n-2} (2k+2)! .
\]
This gives
\begin{align*}
\kappa(L)&= \prod_{k=0}^{n-2} \frac{2^{k+1}(k+1)!}{(2k+2)!} = \prod_{k=1}^{n-2} \frac{2^{k+1}(k+1)!}{(2k+2)!} 
= \prod_{k=1}^{n-2} \frac{2^{k}k!\cdot 2(k+1)}{(2k+1)!\cdot(2k+2)} \\
&= \prod_{k=1}^{n-2} \frac{2^{k}k!}{(2k+1)!} = \prod_{k=1}^{n-2} \frac{1}{(2k+1)!!} .
\end{align*}
Combining this with eq.~\eqref{almost B} gives eq.~\eqref{det2}.
\end{proof}

To give a more self-contained proof of Proposition~\ref{DetB}, we will rely on the following lemma.
\begin{lem} 
\label{Det2Lemma}
For $m\in \mathbb{N}$, if $A_m$ is the $m\times m$ matrix whose $ab$'th element is $\binom{m+1}{2a-b}$, i.e.,     
\[
A_1=2, \quad A_2 = \begin{pmatrix}3 & 1 \\ 1 & 3\end{pmatrix}, 
\quad A_3 = \begin{pmatrix}4 & 1 & 0 \\ 4 & 6 & 4 \\ 0 & 1 & 4\end{pmatrix} \quad \hbox{etc.,}
\]
then $\det A_m = 2^{m(m+1)/2}$
\end{lem}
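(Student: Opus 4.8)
The plan is to realise $A_m$ as the matrix, in monomial bases, of an explicit linear map between polynomial spaces, and then to triangularise that map by changing to two cleverly chosen bases. Concretely, let $L$ be the map sending a polynomial $q(t)=\sum_{b=1}^m x_bt^{b-1}$ of degree $\le m-1$ to the vector of coefficients of $t,t^3,\dots,t^{2m-1}$ in $(1+t)^{m+1}q(t)$. Since $(1+t)^{m+1}q(t)$ has degree $\le 2m$ these are exactly $m$ numbers, and the coefficient of $t^{2a-1}$ is $\sum_b\binom{m+1}{2a-b}x_b$; hence the matrix of $L$ is precisely $A_m$. (It is worth sanity-checking this against the displayed $A_1,A_2,A_3$.)

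For the domain I would take the basis $q_c(t)=(1+t)^c(1-t)^{m-1-c}$, $c=0,\dots,m-1$, and for the target the basis $r_d(t)=t(1-t^2)^{m-1-d}$, $d=0,\dots,m-1$; the $r_d$ have distinct degrees $2m-1,2m-3,\dots,1$, so they indeed form a basis of the odd polynomials of degree $\le 2m-1$. The key computation is that $(1+t)^{m+1}q_c(t)=(1-t^2)^{m-1-c}(1+t)^{2c+2}$, so, the first factor being even, $L(q_c)$ is $(1-t^2)^{m-1-c}$ times the odd part of $(1+t)^{2(c+1)}$. Writing $(1+t)^2=(1+t^2)+2t$ and expanding by the binomial theorem gives, for $N\ge1$, that the odd part of $(1+t)^{2N}$ equals $2t\,\Psi_N(t^2)$ where $\Psi_N(u)=\sum_{i\ge0}\binom{N}{2i+1}4^iu^i(1+u)^{N-1-2i}$, a polynomial of degree $N-1$ in $u$ with $\Psi_N(1)=4^{N-1}$ (for the last value, note $4^i\cdot2^{N-1-2i}=2^{N-1}$ and $\sum_i\binom{N}{2i+1}=2^{N-1}$). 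Thus $L(q_c)=2t(1-t^2)^{m-1-c}\Psi_{c+1}(t^2)$; expanding $\Psi_{c+1}(t^2)$ in powers of $(1-t^2)$, whose constant term is $\Psi_{c+1}(1)=4^c$, shows $L(q_c)=2\sum_{f=0}^{c}\psi_f\,r_{c-f}$ with $\psi_0=4^c$. So in these bases $L$ is upper triangular with diagonal entries $2\cdot4^c=2^{2c+1}$, and its matrix $M$ has $\det M=\prod_{c=0}^{m-1}2^{2c+1}=2^{m^2}$.

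It remains to track the two change-of-basis matrices, since $A_m=C_rMC_q^{-1}$, where $C_q$ (resp.\ $C_r$) has columns the coordinates of the $q_c$ (resp.\ $r_d$) in the monomial basis. For $\det C_q$ I would use the Vandermonde trick: for any distinct $t_0,\dots,t_{m-1}$ one has $\det\bigl(q_c(t_i)\bigr)_{i,c}=\det C_q\cdot\prod_{i<j}(t_j-t_i)$; and with $u_i=\tfrac{1+t_i}{1-t_i}$ we get $q_c(t_i)=(1-t_i)^{m-1}u_i^c$, so $\det\bigl(q_c(t_i)\bigr)=\prod_i(1-t_i)^{m-1}\cdot\prod_{i<j}(u_j-u_i)$. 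Since $u_j-u_i=\tfrac{2(t_j-t_i)}{(1-t_i)(1-t_j)}$, everything cancels and $\det C_q=2^{\binom m2}=2^{m(m-1)/2}$ (which in particular shows the $q_c$ are a basis). For $C_r$, reordering the target monomials by decreasing degree makes it triangular, each $r_d$ having leading term $\pm t^{2m-1-2d}$ with unit coefficient, so $\det C_r=\pm1$; a quick sign count gives $\det C_r=1$. Combining, $\det A_m=\det C_r\cdot\det M/\det C_q=2^{m^2}/2^{m(m-1)/2}=2^{m(m+1)/2}$.

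The main obstacle is the generating-function step: producing the closed form $2t\,\Psi_N(t^2)$ for the odd part of $(1+t)^{2N}$ and, crucially, pinning down the constant term $\Psi_N(1)=4^{N-1}$, which is exactly what makes the matrix in the new bases triangular with the right diagonal. Everything after that is bookkeeping. (One could alternatively derive Lemma~\ref{Det2Lemma} from Theorem~\ref{RHPKthm} with $N=m$, $L=\{2,4,\dots,2m\}$ and $t=m+1$, but then one still has to simplify the resulting product to $2^{m(m+1)/2}$, so the self-contained argument above seems preferable here.)
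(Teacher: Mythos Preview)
Your argument is correct and rather elegant, but it takes a genuinely different route from the paper's proof. The paper argues inductively: it conjugates $A_m$ by the lower-triangular matrix $V_m$ of all $1$'s and shows (via the binomial identities $\sum_k(-1)^k\binom{r}{k}=(-1)^n\binom{r-1}{n}$ and $\sum_i\binom{n}{2i}=\sum_i\binom{n}{2i+1}=2^{n-1}$) that the result is block upper-triangular with $A_{m-1}$ in the top-left corner and $2^m$ in the bottom-right, giving $\det A_m=2^m\det A_{m-1}$ and hence $2^{1+2+\cdots+m}$.

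Your approach, by contrast, is a direct one-shot diagonalisation: you recognise $A_m$ as the matrix of ``multiply by $(1+t)^{m+1}$ and keep the odd part'' and then change to the bases $q_c=(1+t)^c(1-t)^{m-1-c}$ and $r_d=t(1-t^2)^{m-1-d}$, in which the map becomes triangular with diagonal $2^{2c+1}$. What this buys you is a more conceptual explanation of why only powers of $2$ appear (the $q_c$ basis is essentially the pullback of monomials under the M\"obius map $t\mapsto\tfrac{1+t}{1-t}$, and the $2$'s come from its Jacobian), and you get the diagonal entries $2,8,32,\dots$ explicitly rather than just their product. The price is a somewhat longer computation: the Vandermonde trick for $\det C_q$ and the sign bookkeeping for $\det C_r$ are more work than the single row-operation identity \eqref{conjug} the paper uses. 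Both proofs ultimately rest on the same odd/even binomial sum $\sum_i\binom{N}{2i+1}=2^{N-1}$, which for you appears in evaluating $\Psi_N(1)$ and for the paper appears in identifying the bottom-right entry of $V_mA_mV_m^{-1}$.
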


\begin{proof}
Let $V_m$ be the $m\times m$ lower triangular matrix for which 
\[
({V_m})_{ab} =
\begin{cases}
1 & a\geq b\\
0 & a<b
\end{cases}
\]
and note that $V_m^{-1}$ is the matrix 
\[
(V_m^{-1})_{ab} =
\begin{cases}
1 & a=b\\
-1 & a=b+1\\
0 & \text{otherwise}.
\end{cases}
\]
 Then we claim that, for $m \in \mathbb{N}$, $m>1$,
\begin{equation}
\label{conjug}
V_mA_mV_m^{-1} =\begin{pmatrix} A_{m-1} & {\bf v} \\ {\bf 0} & 2^m\end{pmatrix}
\end{equation}
where $\bf 0$ is the $1 \times (m-1)$ row vector $\begin{pmatrix}0 & 0 & \dots & 0\end{pmatrix}$ and $\bf v$ is some $(m-1) \times 1$ column vector the values of whose elements we do not need to consider.  For example, 
\[
\begin{pmatrix} 1 & 0 & 0 \\ 1 & 1 & 0 \\ 1 & 1 & 1\end{pmatrix}\begin{pmatrix} 4 & 1 & 0\\ 4 & 6 & 4\\ 0 & 1 & 4\end{pmatrix}\begin{pmatrix}1 & 0 & 0\\-1 & 1 & 0\\0 & -1 & 1\end{pmatrix} = \begin{pmatrix} 3 & 1 & 0\\1 & 3 & 4\\0 & 0 & 8\end{pmatrix} .
\]
(So in this case, $\bf v$ turns out to be $\left(\begin{smallmatrix}0\\4\end{smallmatrix}\right)$.)
Clearly it follows from \eqref{conjug} that, for $m>1$,
\[
\det A_{m} = 2^m\det A_{m-1}
\]
while we know $\det A_1 =2$.  Hence, by induction, $\det A_{m} = 2^1\cdot2^2\cdots 2^m = 2^{m(m+1)/2}$.

To prove the above claim, we notice that \eqref{conjug} amounts, in index notation, to the following pair of sets of equations:  First, for  any $m\ge 1$, $i \le m$, $j<m$,
\begin{equation}
\label{ilemjm}
\sum_{p=1}^i\binom{m+1}{2p-j}-\binom{m+1}{2p-j-1}=\binom{m}{2i-j}
\end{equation}
where, in particular, we notice that for $i=m$ and $j<m$, the right hand side is zero; and second, for any $m \ge 1$,
\[
\sum_{p=1}^m\binom{m+1}{2p-m}=2^m.
\]
It is not difficult to see that these sets of equations follow from the well-known binomial identities (see e.g.\  \url{https://proofwiki.org/wiki/Properties_of_Binomial_Coefficients})
\[
\sum_{k=0}^n (-1)^k\binom{r}{k}=(-1)^n\binom{r-1}{n}
\]
\[
\sum_{i\ge 0}\binom{n}{2i}=2^{n-1}; \quad \sum_{i\ge 0}\binom{n}{2i+1}=2^{n-1}.
\]
For example, for $r\ge 5$, the first of these identities in the case $n=3$ entails equation \eqref{ilemjm} both when $m=r-1$, $i=2$ and $j=1$ and also when $m=r-1$, $i=3$ and $j=3$.
\end{proof} 

\begin{proof}[Alternative proof of Prop.~\ref{DetB}]
This relies on showing first that $\det B$ is either even or odd. (In fact it is even for $\lfloor\frac{n}2\rfloor$ even and odd for $\lfloor\frac{n}2\rfloor$ odd).   Once this is established, we may argue as follows:  Since all elements of a row of $B$ which ends in $b_m$ have $b_m$ as a common factor, $\det B$ must be divisible by $b_1b_3\dots b_{n-1}$ when $n$ is even and by
$b_2b_4\dots b_{n-1}$ when $n$ is odd.   I.e.\ it must be divisible by $\theta^{\lfloor\frac{n}2\rfloor}\prod_{k=1}^{n-2} \left(\theta + k\right)^{\lfloor\frac{n-k}2\rfloor}$ -- which is an order $\left (n(n-1)/2+{\lfloor\frac{n}2\rfloor}\right)/2$ polynomial in $\theta$.   On the other hand,  $\det B$ is clearly an order $n(n-1)/2$ polynomial.  The only way this can be true and $\det B$ be even or odd is if it takes the form \eqref{det2} up to an overall constant. For example, when $n=6$, we know $\det B$ is divisible by $\theta^3(\theta+1)^2(\theta+2)^2(\theta+3)(\theta+4)$ while $\det B$ is an order 15 polynomial in $\theta$ and also odd.  Clearly, the only way this can be true is if $\det B$ is $\theta^3(\theta+1)^2(\theta+2)^2(\theta+3)(\theta+4)$ times a constant times $(\theta-1)^2(\theta-2)^2(\theta-3)(\theta-4)$.

To prove that $\det B$ is either even or odd, define $D$ similarly to $B$ except that $b_m$ is replaced by $d_m$ where
\begin{equation*}
d_m= \frac{\theta(\theta -1) \dots (\theta - m+1)}{m!}
\end{equation*}
Then it clearly suffices to show that $\det B=\det D$.    To show this, we first
define $\tilde B$ and $\tilde D$ to be the $(n-1)\times (n-1)$ matrices obtained by striking out the leftmost column and the topmost row of each of $B$ and $D$.  Clearly $\det\tilde B=\det B$ and 
$\det\tilde D=\det D$.   We will exhibit an $(n-1)\times (n-1)$ lower triangular matrix, $W$, whose diagonal elements are all 1, such that 
\begin{equation}
\label{triang}
\tilde B = W\tilde D 
\end{equation}
from which we can immediately conclude that $\det B=\det\tilde B=\det\tilde D=\det D$.

We define $W$ to have components
\begin{equation*}
W_{ab} = (-1)^{a-c}\binom{-\theta}{a-c}.
\end{equation*}
(Here, and below, we use the usual conventions that, for any $v$, when $m=0$, $\binom{v}{m}=1$ and, when $m<0$, $\binom{v}{m}=0$.)
Equation \eqref{triang} then amounts, in index notation, to the equation
\begin{equation}
\label{matreq}
(-1)^c\binom{-\theta}{2a-c}=\sum_{b=1}^a (-1)^{a-b}\binom{-\theta}{a-b}\binom{\theta}{2b-c}.
\end{equation}
This follows immediately from the binomial identity
\begin{equation}
\label{binomident}
(-1)^q\binom{-t}{q}=\sum_{(r,s)\in \{\mathbb{N}_0\times \mathbb{N}_0| 2r+s=q\}} (-1)^r\binom{-t}{r}\binom{t}{s}
\end{equation}
which is, in turn, easily demonstrated by equating coefficients of powers of $t$ after binomially expanding each of the terms in the equation 
\begin{equation*}
(1-x)^{-t}=(1-x^2)^{-t}(1+x)^{t}.
\end{equation*}
To see that \eqref{matreq} follows from \eqref{binomident}, we take $q$ to equal $2a-c$, $r$ to equal $a-b$, and $s$ to equal $2b-c$.   In particular, it is easy to see that the sum from $b=1$ to $a$ in \eqref{matreq} then amounts to the sum over the set  $\{(r,s)\in \mathbb{N}_0\times \mathbb{N}_0\mid 2r+s=q\}$.  For example, for any $n \ge 4$ (so that $\tilde B$ and $\tilde D$ are at least $3\times 3$ matrices) if $a=3$ and $c=2$, then $q=4$ and, as $b$ increases from 1 to 3, $(r,s)$ ranges over the set $\{(2,0), (1,2), (0,4)\}$.

To fix the overall constant in \eqref{det2}, we notice that, for $\theta=n-1$, 
$\tilde D_{ab}$ is the $(n-1) \times (n-1)$ matrix with $ab$ component $\binom{n-1}{2a-b}$.   Moreover this has bottom row 
$\begin{pmatrix} 0 & 0 & \dots & 0 & 1\end{pmatrix}$ and so, by expanding along its bottom row, its determinant is the same as the determinant of the $(n-2) \times (n-2)$ matrix with the same formula for its $ab$'th element.   By Lemma \ref{Det2Lemma}, this has  determinant $2^{(n-1)(n-2)/2}$. 
Thus 
\begin{equation*}
\det B = \frac{2^{(n-1)(n-2)/2}\theta^{\lfloor\frac{n}2\rfloor}\prod_{k=1}^{n-2}\left(\theta^2-k^2\right)^{\lfloor\frac{n-k}2\rfloor}}{(n-1)^{\lfloor\frac{n}2\rfloor}\prod_{k=1}^{n-2}\left((n-1)^2-k^2\right)^{\lfloor\frac{n-k}2\rfloor}}
\end{equation*}
which is easily seen to be an alternative way of expressing eq.~\eqref{det2}.
\end{proof}


\begin{thebibliography}{10}
\bibitem{Pade} George A.~Baker and Peter Graves-Morris,
Pad\'e Approximants.\
Cambridge University Press, Cambridge 1996.

\bibitem{Zeros1} 
K.~Driver and K.~Jordaan, 
Zeros of the hypergeometric polynomial $F(-n,b,c; z)$. 
arXiv:0812.0708

\bibitem{Zeros2} 
K.~Driver and M.~M\"oller, 
Zeros of the hypergeometric polynomials $F(-n,b,-2n;z)$ 
J.~of~Approximation~Theory {\bf 110}, 74--87 (2001)

\bibitem{Freud} 
G\'eza Freud, 
Orthogonal Polynomials.\  
Pergamon Press, Oxford 1971.

\bibitem{HallTrigEllipt} 
Richard R. Hall, 
Some trigonometric and elliptic integrals.\   
Computational Methods and Function Theory.\ {\bf 8}, No.~2, 531--544 (2008)

\bibitem{RHPK}
Richard R. Hall and Philip Keningley,
Determinants involving binomial coefficients.
To appear.

\bibitem{AlgTop} Edwin H.~Spanier, Algebraic Topology, Springer-Verlag, New York 1995.

\bibitem{Walsh} J.L.~Walsh, 
A closed set of normal orthogonal functions.\  
Am.~J.~Math. {\bf 45}, No.~1, 5--24 (1923)

\bibitem{WW} 
E.T.~Whittaker and G.N.~Watson, 
Modern Analysis.\  
Cambridge University Press, Cambridge 1927.

\bibitem{Whitt} 
E.F.~Whittlesey, 
Fixed points and antipodal points.\  
Am.~Math.~Monthly {\bf 70}, No.~8, 807--821 (1963) 
\end{thebibliography}
\end{document}